\title{Torsion contact forms in three dimensions have two or infinitely many Reeb orbits}
\author{Dan Cristofaro-Gardiner\footnote{Partially supported by NSF grant DMS-1402200.}, Michael Hutchings\footnote{Partially supported by NSF grant DMS-1406312 and a Simons Fellowship.}, Daniel Pomerleano\footnote{Partially supported by EPSRC grant EP/L018772/1.}}
\numberwithin{equation}{section}
\newtheorem{theorem}{Theorem}[section]
\newtheorem{question}[theorem]{Question}
\newtheorem{proposition}[theorem]{Proposition}
\newtheorem{corollary}[theorem]{Corollary}
\newtheorem{lemma}[theorem]{Lemma}
\newtheorem{lemma-definition}[theorem]{Lemma-Definition}
\theoremstyle{definition}
\newtheorem{definition}[theorem]{Definition}
\newtheorem{remark}[theorem]{Remark}
\newcommand{\eqdef}{\;{:=}\;}
\renewcommand{\frak}{\mathfrak}
\newcommand{\Q}{{\mathbb Q}}
\newcommand{\R}{{\mathbb R}}
\newcommand{\N}{{\mathbb N}}
\newcommand{\Z}{{\mathbb Z}}
\newcommand{\op}{\operatorname}
\newcommand{\M}{\mc{M}}
\newcommand{\Ker}{\op{Ker}}
\newcommand{\tensor}{\otimes}
\renewcommand{\epsilon}{\varepsilon}
\newcommand{\mc}[1]{{\mathcal #1}}
\newcommand{\floor}[1]{\left\lfloor #1 \right\rfloor}
\newcommand{\ceil}[1]{\left\lceil #1 \right\rceil}
\newcommand{\CZ}{\op{CZ}}
\begin{document}

\maketitle

\begin{abstract}
We prove that every nondegenerate contact form on a closed connected three-manifold, such that the associated contact structure has torsion first Chern class, has either two or infinitely many simple Reeb orbits. By previous results it follows that under the above assumptions, 
there are infinitely many simple Reeb orbits if the three-manifold is not the three-sphere or a lens space.  
We also show that for non-torsion contact structures, every nondegenerate contact form has at least four simple Reeb orbits.

\end{abstract}

\section{Introduction} 
 \label{sec:intro}

\subsection{Statement of the main result}

Let $Y$ denote a closed connected three-manifold.
Recall that a {\bf contact form\/} on $Y$ is a $1$-form $\lambda$ on $Y$ such that $\lambda\wedge d\lambda \neq 0$ everywhere. Associated to $\lambda$ is the {\bf Reeb vector field\/} $R$ characterized by $d\lambda(R,\cdot)=0$ and $\lambda(R)=1$.  Also associated to $\lambda$ is the {\bf contact structure\/} $\xi=\Ker(\lambda)$; this is a $2$-plane field on $Y$ which is oriented by $d\lambda$.

A {\bf Reeb orbit\/} is a periodic orbit of $R$, that is a map $\gamma:\R/T\Z\to Y$ for some $T>0$ such that $\gamma'(t)=R(\gamma(t))$ for all $t$. We consider two Reeb orbits to be equivalent if they differ by precomposition with a translation of the domain. A Reeb orbit $\gamma$ is {\bf simple\/} if the map $\gamma$ is an embedding. Every Reeb orbit is an $m$-fold cover of a simple Reeb orbit for some positive integer $m$.

The three-dimensional case of the Weinstein conjecture asserts that every contact form on a closed three-manifold has at least one Reeb orbit.  This was proved by Taubes \cite{taubes-weinstein} in 2006, and various special cases had been proved earlier, see e.g.\ the survey \cite{tw}.

Later, the first two authors established the following refinement of Taubes's result:

\begin{theorem}
\label{thm:two}
\cite{two}
Every contact form on a closed three-manifold has at least two simple Reeb orbits.
\end{theorem}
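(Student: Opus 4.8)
The plan is to prove the contrapositive quantitative statement that a contact form with \emph{at most one} simple Reeb orbit cannot exist. Since Taubes's theorem already guarantees at least one Reeb orbit, it suffices to rule out the case of exactly one simple orbit $\gamma$. I would first reduce to the nondegenerate case: if $\lambda$ is degenerate and has only finitely many simple Reeb orbits, choose nondegenerate perturbations $\lambda_n=f_n\lambda\to\lambda$, apply the nondegenerate result to each $\lambda_n$ to obtain two simple orbits, and pass to a limit. The delicate point is to produce two \emph{distinct} simple orbits of $\lambda$ in the limit, rather than two families both collapsing onto iterates of a single orbit; this is arranged using the continuity of the ECH spectral invariants $c_k(Y,\lambda)$ in $\lambda$ together with Gromov--Taubes compactness, which confine at least two generators to a fixed action window. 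The remainder of the argument treats nondegenerate $\lambda$.

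The main input is embedded contact homology $ECH(Y,\xi)$ together with Taubes's isomorphism $ECH_*(Y,\xi)\cong\widehat{HM}^{-*}(Y)$ with Seiberg--Witten--Floer cohomology, and the degree $-2$ operator $U$. For the torsion $\Spinc$ structure determined by $\xi$, the Floer homology carries an infinite $U$-tower: there are nonzero classes $\zeta_0,\zeta_1,\zeta_2,\dots$ with $U\zeta_{k+1}=\zeta_k$ and gradings $\deg\zeta_k=\deg\zeta_0+2k\to+\infty$. Hence $ECH(Y,\xi)$ is nonzero in infinitely many gradings forming a set of \emph{positive density} (an arithmetic progression of step $2$). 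Since $ECH_d(Y,\xi)$ is computed from a chain complex generated by the ECH generators of grading $d$, every grading $\deg\zeta_k$ must contain at least one ECH generator.

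Now suppose $\lambda$ has exactly one simple Reeb orbit $\gamma$, which by nondegeneracy is either hyperbolic or elliptic. If $\gamma$ is (positive or negative) hyperbolic, then it may occur in an ECH generator with multiplicity at most one, so the only generators are $\emptyset$ and $\gamma$; thus $ECH(Y,\xi)$ has total rank at most two, contradicting its infinitely many nonzero gradings. If $\gamma$ is elliptic, the ECH generators are exactly the iterates $\gamma^m$, $m\ge 0$. By the ECH index formula the grading $I(\gamma^m)=c_\tau(m[\gamma])+Q_\tau(m[\gamma])+\sum_{i=1}^{m}\CZ_\tau(\gamma^i)$ grows \emph{quadratically} in $m$: the relative self-intersection term $Q_\tau$ and the Conley--Zehnder sum are both of order $m^2$ with positive leading coefficient (consistent with the volume asymptotics $c_k^2\sim 2k\,\vol$). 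Consequently the set $\{I(\gamma^m)\}$ has density zero, so the number of generators of grading at most $N$ is $O(\sqrt N)$, whereas a positive-density set of gradings below $N$ must each contain a generator. This is a contradiction for large $N$.

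I expect the main obstacle to be the elliptic case, where the contradiction is quantitative rather than a mere rank count: one must pin down the precise asymptotic growth of the ECH index $I(\gamma^m)$ and confront it with the positive density of the $U$-tower (equivalently, with the Weyl law for the ECH spectrum). A second, more technical obstacle is the reduction from degenerate to nondegenerate contact forms, where one must prevent the collision of distinct orbits under perturbation; this is precisely where the continuity and monotonicity of the ECH spectral invariants do the essential work.
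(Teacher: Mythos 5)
This theorem is not proved in the present paper: it is quoted from \cite{two}, and the only hint the paper gives about its proof is the remark at the end of \S1.2 that the volume property was used there too. The argument of \cite{two} is the action-growth argument that this paper reuses almost verbatim in the proof of Theorem~\ref{thm:nontorsion}(a): if there were at most one simple Reeb orbit $\gamma$, of action $a$, then every orbit set would have action in $a\Z_{\ge 0}$; a $U$-sequence $\{\sigma_k\}$ exists by Proposition~\ref{prop:Useq}(a), each $c_{\sigma_k}$ is the action of an orbit set by \eqref{eqn:actionrepresentation}, and the strict monotonicity \eqref{eqn:actionU} forces $c_{\sigma_k}\ge c_{\sigma_1}+(k-1)a$ to grow linearly in $k$, contradicting the sublinear growth $c_{\sigma_k}\sim\sqrt{2k\op{vol}(Y,\lambda)}$ of \eqref{eqn:Useqasymptotics}. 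This one argument handles elliptic and hyperbolic orbits, and degenerate contact forms, uniformly. Your proposal shares the Seiberg--Witten input (the infinite $U$-tower) but replaces the action count by a grading count in the elliptic case, which is a genuinely different mechanism.

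Two gaps remain in your version. First, in the elliptic case you assert that $I(\gamma^m)$ grows quadratically \emph{with positive leading coefficient}, justified only by ``consistency with the volume asymptotics''; that is not a proof. The leading coefficient is $Q_\tau(\gamma)+\theta$, where $\theta$ is the rotation number: it is irrational, hence nonzero, but it could a priori be negative, in which case your density comparison does not apply and one must argue separately that gradings tending to $-\infty$ also contradict the upward $U$-tower. This is fillable but missing. Second, and more seriously, the reduction of the degenerate case to the nondegenerate case is not carried out. Extracting two simple orbits of $\lambda$ as limits of the two orbits of perturbations $\lambda_n$ fails exactly where you flag it: the limiting orbits may be iterates of a single simple orbit, and neither continuity of spectral invariants nor Gromov--Taubes compactness rules this out by itself. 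The proof in \cite{two} sidesteps this entirely by running the action-growth argument directly on the degenerate form, using that \eqref{eqn:actionrepresentation} persists for degenerate forms and that \eqref{eqn:actionU} persists when there are finitely many simple Reeb orbits (\cite[Lem.~3.1]{two}). Without some such substitute, your degenerate-case step is not a proof.
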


The lower bound of two is the best possible, because there exist contact forms on $S^3$ with exactly two simple Reeb orbits, see e.g.\ \cite[Ex.\ 1.8]{bn}. One can also take quotients of these examples by cyclic group actions to obtain contact forms on lens spaces with exactly two simple Reeb orbits. However one could try to prove the existence of more simple Reeb orbits under additional assumptions.

The following theorem provides some inspiration.  Recall that if $\gamma$ is a Reeb orbit, the linearized Reeb flow along $\gamma$ defines a symplectic linear map $P_\gamma$, the ``linearized return map'',
from $(\xi_{\gamma(0)},d\lambda)$ to itself. The Reeb orbit $\gamma$ is {\bf nondegenerate\/} if $1$ is not an eigenvalue of $P_\gamma$. In this case, we say that $\gamma$ is {\bf positive hyperbolic\/} if $P_\gamma$ has positive eigenvalues, and {\bf negative hyperbolic\/} if $P_\gamma$ has negative eigenvalues; otherwise $P_\gamma$ has eigenvalues on the unit circle and we say that $\gamma$ is {\bf elliptic\/}. The contact form $\lambda$ is called nondegenerate if every (not necessarily simple) Reeb orbit is nondegenerate.

\begin{theorem}
\label{thm:hwz}
(Hofer-Wyoscki-Zehnder \cite[Cor.\ 1.10]{hwz2})
Let $\lambda$ be a nondegenerate contact form on $S^3$.  Assume that 
\begin{enumerate}[(a)]
\item  $\xi=\Ker(\lambda)$ is the standard\footnote{Here, the ``standard'' contact structure refers to the kernel of the restriction of $\lambda_{std} =  \frac{1}{2} \sum_{i=1}^2 x_i dy_i - y_i dx_i$ to the unit sphere in $\mathbb{C}^2 = \mathbb{R}^4$.} contact structure on $S^3$. 
\item The stable and unstable manifolds of all hyperbolic Reeb orbits of $\lambda$ intersect transversely.
\end{enumerate}
Then $\lambda$ has either two or infinitely many simple Reeb orbits.
\end{theorem}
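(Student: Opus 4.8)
The plan is to translate the problem into the language of pseudoholomorphic curves in the symplectization $(\R\times S^3, d(e^s\lambda))$, equipped with a generic $\lambda$-compatible almost complex structure $J$, and to argue by contradiction: assuming that $\lambda$ has only finitely many simple Reeb orbits, I would show that it has exactly two. The central object is a \emph{disk-like global surface of section}: an embedded open disk $D\subset S^3$ whose closure is a once-punctured disk with boundary a single Reeb orbit $P_0$ (the binding), such that the Reeb flow is positively transverse to $D$ and every other trajectory meets $D$ in both forward and backward time. Once such a surface exists, the first return map is an area-preserving diffeomorphism of the open disk (with respect to $d\lambda|_D$), and the simple Reeb orbits other than $P_0$ correspond to its periodic points, so the count reduces to two-dimensional dynamics.

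To produce the binding orbit and the surface of section I would construct a stable finite energy foliation of $\R\times S^3$, exploiting that $\xi$ is the standard, hence tight, contact structure. Starting from a family of Bishop disks with boundary on $S^3$ and enlarging it, one follows the family until it breaks; Gromov compactness together with the nondegeneracy hypothesis forces the limit to be a finite energy plane asymptotic to a Reeb orbit $P_0$. Tightness is what guarantees that $P_0$ is unknotted with self-linking number $-1$, which are precisely the topological conditions ensuring that the moduli space of such index-two planes, after projection to $S^3$, foliates the complement $S^3\setminus P_0$ by embedded disks transverse to the Reeb flow. This yields the desired disk-like global surface of section.

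The technical heart, and the step I expect to be the main obstacle, is controlling the degenerations of this moduli space so that the planes sweep out $S^3\setminus P_0$ exactly once rather than breaking apart. This is where hypothesis (b) enters: automatic transversality for the relevant low-index curves, combined with the Morse--Smale condition that the stable and unstable manifolds of hyperbolic orbits intersect transversely, rules out the nodal degenerations and connecting trajectories that would otherwise destroy the foliation. Under the standing assumption that there are only finitely many Reeb orbits, one further checks that the resulting stable finite energy foliation can only be an open book with disk pages and a single binding orbit, since hyperbolic binding orbits joined by connecting curves would generate infinitely many periodic orbits via homoclinic/heteroclinic dynamics. Carrying out this construction and degeneration analysis requires the full apparatus of asymptotic and compactness estimates for finite energy curves, and is by far the deepest part of the argument.

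Finally, given the disk-like global surface of section, I would invoke two-dimensional fixed point theory to finish. The return map $\psi$ preserves the finite measure $d\lambda|_D$ on the open disk; if $\psi$ were fixed-point free, then by Brouwer's plane translation theorem all orbits would be wandering, contradicting Poincar\'e recurrence, so $\psi$ has a fixed point, corresponding to a second simple Reeb orbit. If $\psi$ possessed a further periodic point, then deleting one fixed point produces an area-preserving homeomorphism of the open annulus with a periodic point, and Franks's theorem yields infinitely many periodic points, hence infinitely many simple Reeb orbits, contradicting our assumption. Therefore $\psi$ has exactly one interior fixed point, and $\lambda$ has exactly two simple Reeb orbits, namely $P_0$ and that one interior orbit. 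This establishes the dichotomy of two or infinitely many.
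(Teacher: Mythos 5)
This theorem is not proved in the paper at all: it is quoted verbatim from Hofer--Wysocki--Zehnder \cite[Cor.\ 1.10]{hwz2}, so there is no in-paper argument to compare your proposal against. What you have written is, in outline, the strategy of the cited source itself: construct a stable finite energy foliation of $\R\times S^3$ starting from a Bishop family of disks (using tightness of $\xi$ to control the asymptotic limit, which is unknotted with self-linking number $-1$), extract a disk-like global surface of section when possible, and finish with the Brouwer translation theorem plus Franks's annulus theorem. That is the correct skeleton, and your use of hypothesis (b) --- transversality of stable and unstable manifolds forcing a horseshoe, hence infinitely many orbits, whenever the foliation fails to be an open book with disk pages --- is where (b) genuinely enters in \cite{hwz2}.

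That said, your proposal is an outline rather than a proof, and the step you yourself flag as the ``technical heart'' is where essentially all of the content of \cite{hwz2} lives. In particular, the assertion that the stable finite energy foliation ``can only be an open book with disk pages and a single binding orbit'' under the finiteness assumption is the conclusion of a lengthy case analysis over the possible combinatorial types of the foliation (multiple binding orbits, rigid multi-punctured leaves connecting hyperbolic orbits, cycles of rigid leaves), not something that follows in a line from ``homoclinic dynamics would give infinitely many orbits.'' You should also be aware that the present paper's Theorem~\ref{thm:main} reaches the same dichotomy by a genuinely different route --- ECH, the volume property, and genus-zero global surfaces of section with possibly several boundary components, followed by Franks's theorem --- which is precisely what allows hypotheses (a) and (b) of the HWZ statement to be dropped. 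So if your goal were to reprove Theorem~\ref{thm:hwz} within this paper's framework, the argument of Sections~\ref{sec:gss}--\ref{sec:franks} is the relevant one, not the finite-energy-foliation construction.
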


For more complicated three-manifolds, Colin-Honda \cite{ch} used linearized contact homology to show that for many contact three-manifolds $(Y,\xi)$ supported by an open book decomposition with pseudo-Anosov monodromy, every (possibly degenerate) contact form $\lambda$ with $\Ker(\lambda)=\xi$ has infinitely many simple Reeb orbits.

In fact, no example is currently known of a contact form on a closed connected three-manifold with more than two but only finitely many simple Reeb orbits. Thus it is natural to ask:

\begin{question}
\label{que:2infinity}
Does every contact form on a closed connected three-manifold have either two or infinitely many simple closed orbits?    
\end{question} 

Our main result answers this question in many cases:

\begin{theorem}
\label{thm:main}
Let $Y$ be a closed connected three-manifold and let $\lambda$ be a nondegenerate contact form on $Y$.  Assume that $c_1(\xi)\in H^2(Y;\Z)$ is torsion.  Then $\lambda$ has either two or infinitely many simple Reeb orbits.
\end{theorem}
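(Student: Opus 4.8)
The plan is to assume that $\lambda$ has only finitely many simple Reeb orbits and to deduce that it has exactly two; combined with Theorem~\ref{thm:two} this yields the stated dichotomy. The central tool is embedded contact homology (ECH) together with Taubes's isomorphism identifying it with the Seiberg--Witten Floer cohomology of $Y$. This is exactly where the torsion hypothesis on $c_1(\xi)$ enters: for a torsion $\Spinc$ structure the Seiberg--Witten Floer cohomology carries an infinite ``$U$-tower'', so that in each class $\Gamma\in H_1(Y;\Z)$ the $\Z$-graded group $\op{ECH}_*(Y,\lambda,\Gamma)$ is nonzero in arbitrarily high gradings and the degree $-2$ map $U$ is an isomorphism in all sufficiently high gradings. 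Finiteness of the orbit set makes the ECH chain complex completely combinatorial: its generators are the finite products $\prod_i e_i^{a_i}\prod_j h_j^{b_j}$, where the $e_i$ are the elliptic orbits carrying arbitrary multiplicities $a_i\ge 0$ and the $h_j$ are the hyperbolic orbits, each appearing with multiplicity $b_j\in\{0,1\}$.

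First I would show that there are at least two elliptic orbits. The ECH index of such a product grows quadratically in the multiplicities $a_i$, whereas its symplectic action grows only linearly; quantitatively, the volume property (Weyl-type asymptotics) of the ECH spectrum forces a generator representing a $U$-tower class in grading $\approx 2k$ to have action $\approx\sqrt{2k\,\vol}$, so the multiplicities must grow like $\sqrt{k}$. A single elliptic orbit then contributes tower generators $e^m$ only at gradings spaced like $m^2$, leaving arbitrarily long gaps, while the eventual-isomorphism property of $U$ requires the nonzero ECH groups to recur at a linear rate; hence one elliptic orbit cannot support the tower and at least two are needed. The same count shows that with $p$ elliptic orbits the number of generators of grading $\le L$ grows like $L^{p/2}$, which must dominate the linear growth of the total homology.

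The main obstacle is to upgrade ``at least two elliptic'' to ``exactly two orbits in total'', that is, to rule out a third elliptic orbit and all hyperbolic orbits. Here I would combine two inputs. The mean Euler characteristic of ECH is a fixed invariant computable from the Seiberg--Witten side; on the chain level each positive hyperbolic orbit forces the generators into cancelling pairs, since adjoining such an orbit reverses the parity of the ECH index, so a nonvanishing mean Euler characteristic excludes positive hyperbolic orbits outright and, by the growth estimate above, pins $p$ down to $2$. The genuinely hard case is when the mean Euler characteristic vanishes---notably when $b_1(Y)>0$---and when only negative hyperbolic or extra elliptic orbits occur, where signed counts carry no information. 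For this I would study $U$ directly as an endomorphism of the combinatorial complex: in the absence of positive hyperbolic orbits the ECH differential vanishes, so $U$ being an isomorphism in every sufficiently high grading forces the generators in consecutive gradings into an exact bijection. I expect the ECH-index formula to be compatible with this rigidity only when there are precisely two elliptic orbits and no hyperbolic orbits, the resulting $\Z[U]$-module being the eventually periodic tower of a lens space or of $S^3$; matching this structure to the known two-orbit examples then completes the proof.
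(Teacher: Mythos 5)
Your overall framing (assume finitely many simple Reeb orbits, invoke the Taubes isomorphism to get an infinite $U$-tower, and try to force exactly two orbits) matches the paper's starting point, but the route you take from there is purely algebraic, and the decisive steps do not go through. The paper uses chain-level counting arguments of the kind you describe only for the weaker statements (Theorem~\ref{thm:two}, Theorem~\ref{thm:nontorsion}). For the main theorem it works with the $U$-map holomorphic curves themselves: the volume property together with the $J_0$ index produces an embedded, index-$2$, genus-zero curve with small action gap whose moduli space component is compact (Proposition~\ref{prop:special}); Proposition~\ref{prop:gss} shows this curve projects to a genus-zero global surface of section; and Franks's theorem on area-preserving homeomorphisms of the open annulus then yields the two-or-infinity dichotomy. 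None of this geometric input appears in your proposal, and it is not optional decoration --- it is what replaces the step you leave unproved.

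Concretely, there are two gaps. First, your exclusion of positive hyperbolic orbits via the mean Euler characteristic cannot work: Proposition~\ref{prop:odd} shows that whenever $b_1(Y)>0$ every nondegenerate contact form has a positive hyperbolic simple Reeb orbit, so in that case the ECH differential cannot be assumed to vanish, and a count of chain-complex generators then gives no upper bound on anything (generators only bound homology from above, which is the wrong direction for excluding extra orbits). Moreover the pairing $\alpha\mapsto\alpha h$ shifts the ECH index by an amount depending on $\alpha$ (through the cross terms in $Q_\tau$) and changes the homology class unless $[h]=0$, so it does not produce degree-wise cancellation. Second, and more seriously, your final step --- that the bijection of generators in consecutive high gradings forced by $U$ being an eventual isomorphism is ``compatible only with precisely two elliptic orbits and no hyperbolic orbits'' --- is stated as an expectation, and it is exactly the content of the theorem. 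Even in the most favorable case ($\partial=0$), the gradings of $e_1^{m_1}\cdots e_p^{m_p}$ are governed by a quadratic form whose coefficients (rotation numbers, linking numbers, $Q_\tau$ of the trivial cylinders) are a priori unconstrained, and no one knows how to deduce ``one generator per high even grading forces $p=2$'' from the index formula alone; this is precisely the obstruction that forces the paper to pass through global surfaces of section and surface dynamics.
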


So, for example, assumptions (a) and (b) in Theorem~\ref{thm:hwz} can be dropped. For a different application of Theorem~\ref{thm:main}, we recall that in \cite{wh}, the second author and Taubes showed:

\begin{theorem}
\label{thm:wh}
\cite{wh}
Let $Y$ be a closed three-manifold with a nondegenerate contact form with exactly two simple Reeb orbits.  Then both orbits are elliptic and $Y$ is $S^3$ or a lens space\footnote{In \cite{wh} one just wrote that ``$Y$ is a lens space'', considering $S^3$ to be a special case of a lens space.}.  
\end{theorem}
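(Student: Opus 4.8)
The plan is to use embedded contact homology (ECH) together with Taubes's isomorphism between ECH and Seiberg--Witten Floer cohomology $\widehat{HM}$, playing off the scarcity of ECH generators available when there are only two simple Reeb orbits against the topologically determined structure of $\widehat{HM}$. Denote the two simple Reeb orbits by $\alpha_1,\alpha_2$. Since the contact form is nondegenerate, each $\alpha_i$ is elliptic, positive hyperbolic, or negative hyperbolic, and every ECH generator is an admissible orbit set of the form $\alpha_1^{a}\alpha_2^{b}$ with $a,b\ge 0$, subject to the admissibility constraint that a hyperbolic orbit occurs with multiplicity at most one. In each $\Spinc$ structure the chain complex computes $\widehat{HM}^{-*}$ of the corresponding $\Spinc$ structure, and summed over all $\Spinc$ structures the homology has infinite total rank; moreover, for torsion $\Spinc$ structures the behavior of $\widehat{HM}$ in high grading is governed by the topology of $Y$ (by the work of Kronheimer--Mrowka), reducing to a free module over the polynomial ring in the $U$-variable of a rank determined by $b_1(Y)$.

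First I would eliminate hyperbolic orbits. If both $\alpha_1$ and $\alpha_2$ are hyperbolic then the only admissible orbit sets are $1,\alpha_1,\alpha_2,\alpha_1\alpha_2$, so ECH has total rank at most four, contradicting the infinite total rank above. The remaining possibility is that exactly one orbit, say $\alpha_1$, is hyperbolic while $\alpha_2$ is elliptic; then the generators comprise the two families $\alpha_2^{b}$ and $\alpha_1\alpha_2^{b}$ for $b\ge 0$, giving at most two generators in any bounded range of gradings. I would exclude this using the mod-two ECH grading, which is governed by the multiplicity of the hyperbolic orbit, together with the graded Euler characteristic of ECH, which computes the Turaev torsion of $Y$; comparing the grading distribution and Euler characteristic of these two families against the eventual $U$-module structure of $\widehat{HM}$ produces a contradiction. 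Hence both orbits must be elliptic.

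It then remains to treat the case where $\alpha_1$ and $\alpha_2$ are both elliptic, with irrational rotation numbers $\theta_1,\theta_2$. Now the generators $\alpha_1^{a}\alpha_2^{b}$ range over all lattice points $(a,b)\in\Z_{\ge 0}^2$, and the ECH index of such a generator is assembled from the Conley--Zehnder indices $\CZ(\alpha_i^{k})=2\lfloor k\theta_i\rfloor+1$ and the relative self-intersection term, so that as a function of $(a,b)$ its leading part is a positive-definite quadratic form. I would compute how these gradings distribute over the homology classes $a[\alpha_1]+b[\alpha_2]\in H_1(Y)$ and match the result, via Taubes's isomorphism, against the structure of $\widehat{HM}^{*}(Y)$. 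Requiring that each relevant $\Spinc$ structure be represented in every sufficiently high grading, with the rank dictated by $\widehat{HM}$, forces $H_1(Y)$ to be finite and constrains the number of $\Spinc$ structures carrying nontrivial Floer homology; the conclusion is that the Seiberg--Witten Floer cohomology of $Y$, with its grading and $U$-action, must coincide with that of $S^3$ or a lens space, whence $Y$ itself is $S^3$ or a lens space.

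The hardest part, I expect, is the final implication: passing from the Floer-theoretic data to the topological conclusion that $Y$ is genuinely $S^3$ or a lens space and not merely a manifold sharing their Floer homology. This requires squeezing from the lattice-point grading spectrum enough rigidity---the finiteness and order of $H_1(Y)$, the single-parity structure identifying $Y$ as an $L$-space, and the freeness of the $U$-module---to invoke the classification of the corresponding Floer modules. A secondary technical difficulty is the careful exclusion of the mixed elliptic--hyperbolic case, where infinitely many generators persist so that a naive rank count does not suffice and one must argue instead through the gradings, the differential, and the Euler characteristic.
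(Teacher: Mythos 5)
First, a point of bookkeeping: the paper does not prove Theorem~\ref{thm:wh} at all --- it is imported from \cite{wh} --- so the relevant comparison is with the argument of that reference and with the closely related counting arguments that the present paper does carry out in the proof of Theorem~\ref{thm:nontorsion}(b). Your overall framework (ECH generators built from the two simple orbits, played off against the Kronheimer--Mrowka structure of $\widehat{HM}$ via Taubes's isomorphism) is indeed the framework of \cite{wh}, and your elimination of the two-hyperbolic case by comparing the four admissible orbit sets with the infinite rank of $\widehat{HM}$ is correct and is exactly what is done there.

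However, your final step contains a genuine gap that cannot be repaired within the strategy you describe. You propose to conclude that ``the Seiberg--Witten Floer cohomology of $Y$, with its grading and $U$-action, must coincide with that of $S^3$ or a lens space, whence $Y$ itself is $S^3$ or a lens space.'' The last implication is false as a general principle: Seiberg--Witten Floer cohomology does not determine the homeomorphism type. The Poincar\'e homology sphere, for instance, is an $L$-space with trivial $H_1$ whose $\widehat{HM}$ agrees with that of $S^3$ up to an overall grading shift, so no amount of rigidity extracted from the lattice-point grading spectrum can distinguish it from $S^3$ by Floer-theoretic means alone. The actual proof in \cite{wh} obtains the topological conclusion geometrically rather than algebraically: the $J_0$ index (Proposition~\ref{prop:obscure} here is quoted from \cite[Lem.\ 3.5]{wh} for precisely this purpose) is used to produce an embedded index $2$ holomorphic curve of genus zero with exactly one positive and one negative end --- a cylinder between covers of the two elliptic orbits --- whose moduli space projects to a foliation of the complement of the two orbits; this exhibits $Y$ as a union of two solid tori, hence as $S^3$ or a lens space. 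That construction is the prototype of the global-surface-of-section argument in Sections~\ref{sec:gss} and \ref{sec:curve} of the present paper, and it is the ingredient missing from your outline.

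Your treatment of the mixed elliptic--hyperbolic case is also not yet a proof. If the hyperbolic orbit $h$ is negative hyperbolic, every generator $e^m$, $he^m$ has even $I_2$-grading, and for a rational homology sphere the vanishing of $ECH_{\op{odd}}$ is not by itself a contradiction (this only bites when $b_1(Y)>0$); if $h$ is positive hyperbolic there are infinitely many generators of each parity and the graded Euler characteristic you invoke is not defined without substantial further care. What actually closes this case is a counting or growth argument: along a $U$-sequence the action of the representing generators $e^{m_k}h^{n_k}$ (with $n_k\in\{0,1\}$) must strictly increase, hence grows at least linearly in $k$, which is incompatible with the sublinear growth forced either by the eventual two-periodicity of $\widehat{HM}$ under $U$ (the route available in \cite{wh}) or by the volume asymptotics \eqref{eqn:Useqasymptotics} (the route taken in the proof of Theorem~\ref{thm:nontorsion}(b) in this paper). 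I would encourage you to replace the Euler characteristic argument with this density argument, and to rethink the final step entirely along the lines of the foliation construction.
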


By combining this with Theorem~\ref{thm:main}, we obtain:

\begin{corollary}
\label{cor:notlens}
Let $Y$ be a closed connected three-manifold which is not $S^3$ or a lens space. Then every nondegenerate contact form $\lambda$ on $Y$ such that $c_1(\xi)\in H^2(Y;\Z)$ is torsion has infinitely many simple Reeb orbits.
\end{corollary}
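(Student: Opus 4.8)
The plan is to combine the dichotomy of Theorem~\ref{thm:main} with the classification of Theorem~\ref{thm:wh}; the argument is purely logical and requires no new geometric input, since all of the analytic and topological content is already packaged in those two results. First I would invoke Theorem~\ref{thm:main}: because $\lambda$ is nondegenerate and $c_1(\xi)\in H^2(Y;\Z)$ is torsion, its hypotheses are met, so $\lambda$ has either exactly two or infinitely many simple Reeb orbits. It then remains only to exclude the first alternative.

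To do this I would argue by contradiction. Suppose $\lambda$ had exactly two simple Reeb orbits. Then the hypotheses of Theorem~\ref{thm:wh} are satisfied, and that theorem forces $Y$ to be $S^3$ or a lens space. This directly contradicts the standing assumption that $Y$ is neither $S^3$ nor a lens space. Hence the ``two'' branch of the dichotomy cannot occur, and we conclude that $\lambda$ has infinitely many simple Reeb orbits.

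There is no genuine obstacle in this deduction, so rather than a ``hard part'' the only point worth verifying is a bookkeeping one: namely that the word ``two'' in the conclusion of Theorem~\ref{thm:main} means \emph{exactly} two, consistent with the universal lower bound of Theorem~\ref{thm:two}. Once this is noted, ruling out the exactly-two case indeed leaves only the infinite case, and the corollary follows immediately.
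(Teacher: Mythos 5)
Your proposal is correct and is exactly the paper's argument: the corollary is obtained by combining the dichotomy of Theorem~\ref{thm:main} with Theorem~\ref{thm:wh}, which rules out the two-orbit case when $Y$ is not $S^3$ or a lens space. Nothing further is needed.
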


When $Y$ is $S^3$ or a lens space, we can still combine Theorems~\ref{thm:main} and \ref{thm:wh} to deduce that if a nondegenerate contact form on $Y$ has at least one hyperbolic Reeb orbit, then it has infinitely many simple Reeb orbits\footnote{In particular, if $\lambda$ is a nondegenerate contact form on a closed three-manifold $Y$, and if $\xi$ is overtwisted, then $\lambda$ has at least one positive hyperbolic simple Reeb orbit.  This follows from the fact that the ECH contact invariant of $\xi$ vanishes; see \cite[\S 1.4]{bn}.}.

\subsection{Idea of the proof of the main theorem}

The strategy of the proof of Theorem~\ref{thm:main}, inspired by \cite{hwz1}, is to use holomorphic curves to find a genus zero ``global surface of section'' for the Reeb flow; see Definition~\ref{def:gss}. If $\Sigma$ is a global surface of section, then the Reeb orbits consist of the periodic orbits of a Poincar\'e return map from $\Sigma$ to itself (which preserves the area form on $\Sigma$ given by the restriction of $d\lambda$), together with the Reeb orbits at the boundary of $\Sigma$. If $\Sigma$ has genus zero, then one can deduce the existence of either two or infinitely many simple Reeb orbits by using a theorem of Franks, asserting that an area-preserving homeomorphism of an open annulus has either zero or infinitely many periodic orbits.

In fact, we cannot always find a global surface of section. But we can find one if we assume that $\lambda$ is nondegenerate, that $c_1(\xi)$ is torsion, and that there are only finitely many simple Reeb orbits; and this is enough to prove Theorem~\ref{thm:main}.

To find a global surface of section under these hypotheses, we use embedded contact homology\footnote{In particular, both our argument and the argument of Colin-Honda \cite{ch} mentioned above use Floer homology. The methods of proof, however, are quite different: the approach in \cite{ch} involves detecting Reeb orbits directly by showing exponential growth of linearized contact homology with respect to symplectic action.} (ECH). The ECH of $(Y,\lambda)$ is the homology of a chain complex which is generated by certain finite sets of simple Reeb orbits with positive integer multiplicities, and whose differential counts certain Fredholm index one $J$-holomorphic curves in $\R\times Y$, for a suitable almost complex structure $J$ on $\R\times Y$. Most importantly for the present application, ECH is equipped with a ``$U$ map'', which is induced by a chain map which counts certain Fredholm index two $J$-holomorphic curves in $\R\times Y$. It was shown by Taubes \cite{echswf} that there is a canonical isomorphism between ECH and a version of Seiberg-Witten Floer cohomology, which identifies the $U$ map on ECH with a corresponding ``$U$ map'' on Seiberg-Witten Floer cohomology. By results of Kronheimer-Mrowka \cite{km} on the nontriviality of the latter, it then follows that there are infinitely many\footnote{More precisely, there are infinitely many different nonempty moduli spaces of holomorphic curves counted by the $U$ map.} holomorphic curves in $\R\times Y$ counted by the $U$ map on ECH. This gives us a large supply of holomorphic curves in $\R\times Y$, and we would like to show that at least one of these holomorphic curves projects to a global surface of section in $Y$.

Proposition~\ref{prop:gss} gives general criteria for a holomorphic curve $C$ in $\R\times Y$ to project to a genus zero global surface of section in $Y$. The two most nontrivial criteria to satisfy are the following: First, $C$ must have genus zero; we need this condition both for its own sake and to get an embedded surface in $Y$. Second, the component of the moduli space of holomorphic curves containing $C$ must be compact; this condition implies that these holomorphic curves fill up all of $Y$, except for the Reeb orbits at their ends. Without this condition, the moduli space component containing $C$ would only allow us to describe the dynamics on part of $Y$.  On the other hand, when the conditions in Proposition~\ref{prop:gss} are satisfied, the projections of the holomorphic curves in the same component of $C$ give a foliation with leaf space $S^1$ of the part of $Y$ away from the Reeb orbits at their ends, and the Reeb vector field is transverse to this foliation.

{\em A priori}, the holomorphic curves counted by the $U$ map need not satisfy either of these criteria.  The key new insight of our paper is that one may use the ``volume property" of ECH from \cite{vc} to control both the genus and the potential breakings of these curves.  The volume property is perhaps the deepest property of ECH; it gives a relation between the symplectic action (total period of Reeb orbits) needed to represent classes in ECH and the contact volume of $(Y,\lambda)$.

Our argument for controlling the genus through the volume property uses the ``$J_0$ index", which can be regarded as a formalism encoding the relative adjunction formula.  The $J_0$ index of a curve bounds its topological complexity.  In general, $J_0$ of a holomorphic curve depends on its relative homology class.  However, when $c_1(\xi)$ is torsion, $J_0$ of any holomorphic curve that we find using the $U$ map depends only on the asymptotics of the curve.  The idea of our argument is then to take a sequence of curves counted by $U^N$, and bound the sum of $J_0$ of  these curves.  We use the fact that there are only finitely many simple Reeb orbits to get a bound on this sum in terms of the symplectic action needed to represent a corresponding class in ECH, and we then use the volume identity to get a strong enough bound to show that most of these $N$ curves must have genus $0$ when $N$ is sufficiently large.  In the simpler situation where there are exactly two Reeb orbits, some similar arguments were used in \cite{wh} to prove Theorem~\ref{thm:wh} above; in this situation, however, the volume property was not needed.

We use the volume property again to show that for many of the genus zero curves counted by the $U$ map, the sets of Reeb orbits that they go between have a very small difference in symplectic action; see Lemma~\ref{lem:Usequence}.  By using the assumption that there are only finitely many simple Reeb orbits, and by using the fact that curves counted by the $U$ map satisfy certain constraints on their asymptotics encoded by the ``partition conditions", a combinatorial argument in \S\ref{sec:esc} finds such a curve for which there is no intermediate set of Reeb orbits along which the curve can break, so that the moduli space component is compact.

We remark that the volume property was also used in \cite{two} to prove Theorem~\ref{thm:two} above; for some additional applications of the volume property, see \cite{ai,calabi,irie}.

\subsection{Other results}
\label{sec:other}

To fully answer Question~\ref{que:2infinity}, one would like to generalize Theorem~\ref{thm:main} by dropping the assumptions that $c_1(\xi)$ is torsion and $\lambda$ is nondegenerate. We cannot currently drop the assumption that $c_1(\xi)$ is torsion for reasons alluded to above and explained more in Remark~\ref{rem:technical}; and most of the machinery we use makes extensive use of nondegeneracy. However we can still say the following about the non-torsion and possibly degenerate case:

\begin{theorem}
\label{thm:nontorsion}
Let $\lambda$ be a contact form on a closed three-manifold such that $c_1(\xi)$ is not torsion. Then:
\begin{description}
\item{(a)}
$\lambda$ has at least three simple Reeb orbits.
\item{(b)}
If $\lambda$ is nondegenerate, then $\lambda$ has at least four simple Reeb orbits.
\end{description}
\end{theorem}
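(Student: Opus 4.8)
The plan is to leverage a single structural input: because $Y$ is a closed oriented three-manifold it carries a torsion spin-$c$ structure, and every spin-$c$ structure arises as $\mathfrak{s}_\Gamma = \mathfrak{s}_\xi + \op{PD}(\Gamma)$, with $c_1(\mathfrak{s}_\Gamma) = c_1(\xi) + 2\op{PD}(\Gamma)$, for a unique class $\Gamma \in H_1(Y;\Z)$. Hence there is a class $\Gamma_0$ with $\mathfrak{s}_{\Gamma_0}$ torsion, and since $2\op{PD}(\Gamma_0) = c_1(\mathfrak{s}_{\Gamma_0}) - c_1(\xi)$ is a torsion class minus a non-torsion class, $\Gamma_0$ is itself a \emph{non-torsion} homology class. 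By Taubes's isomorphism \cite{echswf}, $ECH(Y,\lambda,\Gamma_0) \cong \widehat{HM}^{-*}(Y,\mathfrak{s}_{\Gamma_0})$, and by the Kronheimer--Mrowka computation \cite{km} for a torsion spin-$c$ structure this group contains an infinite $U$-tower: it is nonzero in all sufficiently large gradings of one fixed parity, with the $U$-map an isomorphism between consecutive such gradings. The entire argument is then the following dichotomy: having only finitely many simple Reeb orbits constrains the ECH generators lying in the fixed non-torsion class $\Gamma_0$ so severely that they cannot support this tower.

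To make this quantitative, suppose $\lambda$ has simple orbits $\gamma_1,\dots,\gamma_n$ with classes $e_i=[\gamma_i]$; for part (a), where $\lambda$ may be degenerate, I would first pass to nondegenerate approximations and use $C^0$-continuity of the ECH spectral invariants exactly as in \cite{two}, the finiteness hypothesis controlling the limit. Every generator in class $\Gamma_0$ is an admissible orbit set $\prod_i \gamma_i^{m_i}$ with $\sum_i m_i e_i = \Gamma_0$ and $m_i\in\{0,1\}$ whenever $\gamma_i$ is hyperbolic. Writing $r$ for the rank of $\mathrm{span}_\Q\{e_i\}$, the multiplicities satisfy a single effective linear equation, so the solutions form a family whose unbounded directions lie in the elliptic coordinates. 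Along this family the action $A=\sum_i m_i T_i$ is affine, while the ECH index $I$ is, up to the bounded ambiguity in the $\CZ$ terms, a quadratic function whose leading part is the self-intersection form $Q_\tau$; because $\mathfrak{s}_{\Gamma_0}$ is torsion, $I$ is genuinely $\Z$-valued here. Since the chain group must be nonzero wherever the homology is, the tower forces $I$ to take \emph{every} value in a fixed arithmetic progression of common difference two for all large values. The last ingredient is the volume property of \cite{vc}, $c_k(Y,\lambda)^2/k \to 2\,\vol$, applied to the spectral invariants attached to this tower: it caps the growth of the minimal action at grading $2k$ at order $\sqrt{k}$.

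With these tools I would prove part (a) by ruling out $n\le 2$, which forces $r\le 1$ and makes the unbounded family at most a line. If $r=0$ the non-torsion class $\Gamma_0$ is unrepresented and the chain group vanishes; if the family is bounded there are only finitely many generators; either way the tower is contradicted. If the family is an infinite line with nonzero leading index coefficient, then $I$ grows quadratically in the line parameter and misses a positive-density subset of the progression, again a contradiction. In the remaining case the leading coefficient vanishes, so $I$ and $A$ are both affine in the parameter and $c_k$ grows linearly in $k$, contradicting the volume property. For part (b), nondegeneracy is what upgrades $3$ to $4$: with $n\le 3$ we have $r\le 2$, and whenever the unbounded family is at most one-dimensional the same line analysis applies. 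By nondegeneracy the family fails to reduce to a line only when at least \emph{two} elliptic orbits contribute independent unbounded directions.

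The main obstacle is precisely this surviving configuration: two elliptic simple orbits whose free multiplicities produce a genuinely two-dimensional family of generators in class $\Gamma_0$, so that the index is an honest binary quadratic form and the minimal action grows like $\sqrt{k}$ with exactly the constant predicted by the volume identity. Here neither the counting argument nor the crude growth estimate yields a contradiction, since a binary quadratic form can in principle meet every grading of the tower while keeping the action of order $\sqrt{k}$. I expect this case to require the deeper machinery of the main theorem: using the $J_0$ index to bound the genus of the curves counted by $U^N$, combining this with the volume identity to extract genus zero curves, and then running the partition-condition combinatorics of \S\ref{sec:esc} to show that an infinite $U$-tower supported on two elliptic orbits in a non-torsion class is impossible. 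Proving this last incompatibility — equivalently, that two elliptic orbits cannot realize every grading of the tower — is the crux of part (b).
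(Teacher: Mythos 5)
Your setup matches the paper's: fix $\Gamma_0$ with $c_1(\xi)+2\op{PD}(\Gamma_0)$ torsion (hence $\Gamma_0$ non-torsion), obtain an infinite $U$-tower in $ECH_*(Y,\lambda,\Gamma_0)$ from Taubes's isomorphism and Kronheimer--Mrowka, and play the $O(\sqrt{k})$ action growth forced by the volume property \eqref{eqn:Useqasymptotics} against the scarcity of generators in the fixed class $\Gamma_0$. But there is a genuine gap at exactly the point you flag as the ``crux'': the configuration of two elliptic simple orbits $e_1,e_2$ whose multiplicities sweep out ``a genuinely two-dimensional family of generators in class $\Gamma_0$'' cannot occur, and seeing this is what closes the proof --- no $J_0$ index, partition combinatorics, or other machinery from the main theorem is needed. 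The generators in question are the solutions of $m_1[e_1]+m_2[e_2]=\Gamma_0$ with $m_1,m_2\ge 0$, i.e.\ the nonnegative part of a coset of $\ker\bigl(i_{e_1,e_2}:\Z^2\to H_1(Y)\bigr)$. That solution set is two-dimensional only if the kernel has rank two, i.e.\ only if $[e_1]$ and $[e_2]$ are both torsion; but then every such orbit set represents a torsion class and none represents the non-torsion class $\Gamma_0$, so (after using Proposition~\ref{prop:odd} to place the positive hyperbolic orbit in the odd part of the $\Z/2$ grading) $ECH_{\op{even}}(Y,\lambda,\Gamma_0)=0$, contradicting Proposition~\ref{prop:Useq}(b), which applies because $c_1(\xi)$ non-torsion forces $b_1(Y)>0$. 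If the kernel has rank zero there is at most one generator in the class $\Gamma_0$, again a contradiction. In the only remaining case the kernel has rank one, generated by some $(v_1,v_2)$ which nonnegativity of multiplicities forces (up to overall sign) to have $v_1,v_2\ge 0$; along the resulting line the action of the $k$-th distinct generator grows at least linearly in $k$, since consecutive terms differ by at least $\min(\mc{A}(e_1),\mc{A}(e_2))$, contradicting \eqref{eqn:Useqasymptotics}. Your ``honest binary quadratic form'' scenario, in which the minimal action in the class could genuinely grow like $\sqrt{k}$, is precisely the torsion situation excluded by the hypothesis on $c_1(\xi)$.

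Two smaller discrepancies. For part (b) the paper does not extract the third orbit from the tower: it quotes Theorem~\ref{thm:wh} to get three orbits outright, then uses Proposition~\ref{prop:odd} to make one of them positive hyperbolic and shows the other two must both be elliptic (a generator built only from hyperbolic orbits has all multiplicities equal to one, so with at most one elliptic orbit the action along the $U$-tower already grows linearly in $k$), before running the kernel-rank trichotomy above. For part (a), your plan to pass to nondegenerate approximations and take $C^0$-limits is not what the paper does: properties \eqref{eqn:actionrepresentation} and \eqref{eqn:actionU} are already available for degenerate contact forms with finitely many simple Reeb orbits (the latter by \cite[Lem.\ 3.1]{two}), so the same trichotomy for $i_{\gamma_1,\gamma_2}$ applies directly to rule out exactly two orbits.
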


The proof of Theorem~\ref{thm:nontorsion} is different and simpler than that of the main theorem, although it still uses the volume property of ECH.

Finally, in the course of the proof of Theorem~\ref{thm:main}, we obtain another result which involves refining the three-dimensional Weinstein conjecture by looking for Reeb orbits of particular types.

\begin{question}
\label{question:ph}
Let $Y$ be a closed connected three-manifold which is not $S^3$ or a lens space, and let $\lambda$ be a nondegenerate contact form on $Y$. Does $\lambda$ have a positive hyperbolic simple Reeb orbit?
\end{question}

By Theorem~\ref{thm:wh}, under the assumptions of Question~\ref{question:ph} there exists a hyperbolic simple Reeb orbit, which however might not be positive hyperbolic. We can say a bit more here:

\begin{proposition}
\label{prop:odd}
Every nondegenerate contact form on a closed three-manifold $Y$ with $b_1(Y)>0$ has a positive hyperbolic simple Reeb orbit.
\end{proposition}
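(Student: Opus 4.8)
The plan is to argue by contradiction: assume that $\lambda$ has no positive hyperbolic simple Reeb orbit, and use the structure of embedded contact homology to derive a contradiction from the hypothesis $b_1(Y)>0$. The key input is the canonical $\Z/2$ grading on ECH. Recall the standard fact that the mod $2$ reduction of the ECH index of a generator $\alpha=\{(\alpha_i,m_i)\}$ is $I(\alpha)\equiv h_+(\alpha)\pmod 2$, where $h_+(\alpha)$ denotes the number of positive hyperbolic simple orbits $\alpha_i$ appearing in $\alpha$ (each necessarily with multiplicity one). Under our assumption $h_+(\alpha)=0$ for every generator, so every generator lies in even $\Z/2$ degree. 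Since the ECH differential lowers the ECH index by one and hence reverses the $\Z/2$ grading, and since there are no odd generators, the differential must vanish identically. Therefore $ECH_*(Y,\lambda,\Gamma)=ECC_*(Y,\lambda,\Gamma)$ for every $\Gamma\in H_1(Y;\Z)$, and in particular the entire ECH of $(Y,\lambda)$ is supported in a single $\Z/2$ degree.

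Next I would transport this to Seiberg-Witten Floer cohomology. By Taubes's isomorphism \cite{echswf}, for each $\Gamma$ there is a canonical isomorphism $ECH_*(Y,\lambda,\Gamma)\cong \widehat{HM}^{-*}(Y,\mathfrak{s}_\xi+\op{PD}(\Gamma))$ respecting the $\Z/2$ gradings up to an overall shift. The conclusion above thus forces $\widehat{HM}(Y,\mathfrak{s})$ to be supported in a single $\Z/2$ degree for every spin-c structure $\mathfrak{s}$. I will contradict this for a suitable torsion spin-c structure, which exists: since $Y$ is an orientable three-manifold it is spin, so $c_1(\mathfrak{s})$ is divisible by $2$ in $H^2(Y;\Z)/\mathrm{tors}$ for every $\mathfrak{s}$; as $c_1(\mathfrak{s}+e)=c_1(\mathfrak{s})+2e$ and every spin-c structure has the form $\mathfrak{s}_\xi+\op{PD}(\Gamma)$, one can choose $\Gamma_0\in H_1(Y;\Z)$ so that $\mathfrak{s}_0:=\mathfrak{s}_\xi+\op{PD}(\Gamma_0)$ is torsion.

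For the torsion spin-c structure $\mathfrak{s}_0$, I would invoke the computation of the ``bar'' version $\overline{HM}$ from Kronheimer-Mrowka \cite{km}: for torsion $\mathfrak{s}_0$, $\overline{HM}_*(Y,\mathfrak{s}_0)$ is a module on which the $U$-action has even degree, modeled on $\Lambda^* H^1(Y)$, with the summand $\Lambda^j H^1(Y)$ contributing classes of $\Z/2$ degree $\equiv j\pmod 2$. Since $b_1(Y)>0$ we have $H^1(Y)\neq 0$, so both $\Lambda^0$ and $\Lambda^1$ contribute and $\overline{HM}(Y,\mathfrak{s}_0)$ is nonzero in both $\Z/2$ degrees. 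Finally, the reduced monopole Floer homology is finitely generated, so in an infinite range of gradings — including gradings of both parities — the relevant flavor of monopole Floer homology agrees with $\overline{HM}$ via the exact triangle relating the three flavors. Hence $\widehat{HM}(Y,\mathfrak{s}_0)$ is nonzero in both $\Z/2$ degrees, contradicting the conclusion of the second paragraph. This contradiction shows that $\lambda$ must have a positive hyperbolic simple Reeb orbit.

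The main obstacle is the careful bookkeeping of $\Z/2$ gradings: one must verify both that the ECH index reduces mod $2$ to the positive-hyperbolic count — so that ``no positive hyperbolic orbit'' really forces the differential to vanish — and that Taubes's isomorphism together with the Kronheimer-Mrowka model for $\overline{HM}$ genuinely produces odd-degree classes precisely when $b_1(Y)>0$. Pleasantly, the argument uses nondegeneracy only insofar as it is needed to define ECH, and requires no finiteness assumption on the set of Reeb orbits.
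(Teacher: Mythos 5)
Your argument is essentially the paper's proof run in the contrapositive: the paper invokes Proposition~\ref{prop:Useq}(b) to produce a nonzero class in $ECH_{\op{odd}}(Y,\lambda,\Gamma)$ for $\Gamma$ with $c_1(\xi)+2\op{PD}(\Gamma)$ torsion and reads off a positive hyperbolic orbit from the definition of $I_2$, while you assume no such orbit exists, conclude that every generator is even so $ECH_{\op{odd}}=0$, and then unfold the Seiberg--Witten input behind Proposition~\ref{prop:Useq}(b) to reach a contradiction. The logic is sound, but one step needs repair: for a torsion spin-c structure, $\overline{HM}_*(Y,\frak{s}_0)\otimes\R$ is not simply $\Lambda^*H^1(Y)\otimes\R[U,U^{-1}]$; it is the homology of the twisted de Rham complex $\left(\Omega^*({\mathbb T}^{b_1(Y)})\otimes\R[U,U^{-1}],\, d+U\eta\wedge\right)$, and the differential $x\mapsto U(\eta\wedge x)$ on the $E^3$ page can kill exactly the classes you name --- for $Y=T^3$ with nontrivial triple cup product, the class of $1\in\Lambda^0$ does not survive --- so ``both $\Lambda^0$ and $\Lambda^1$ contribute'' is not literally correct. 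The conclusion you need (nonvanishing in both $\Z/2$ parities when $b_1(Y)>0$) is still true, but the clean derivation is the one in the paper's appendix: the Euler characteristic of the twisted homology equals $\chi\left(H^*({\mathbb T}^{b_1(Y)})\right)=0$, while \cite[Cor.\ 35.1.3]{km} guarantees the twisted homology is nonzero, so both parities must be populated; your concluding passage from $\overline{HM}$ to $\widehat{HM}$ via the exact triangle then matches the paper's.
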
   

Proposition~\ref{prop:odd} is proved in \S\ref{sec:ech} as a direct corollary of the isomorphism between ECH and Seiberg-Witten Floer cohomology.

\subsection{Outline of the rest of the paper}

Section \ref{sec:ech} reviews everything that we will need to know about embedded contact homology. In particular, the $J_0$ index is reviewed in \S\ref{sec:J0}, and the volume property is reviewed in \S\ref{sec:asymp}. Section \ref{sec:gss} proves Proposition~\ref{prop:gss}, which gives general criteria for a holomorphic curve in $\R\times Y$ to project to a global surface of section for the Reeb flow in $Y$. The heart of our argument, Section~\ref{sec:curve}, uses ECH to find a holomorphic curve satisfying these criteria, assuming that $\lambda$ is nondegenerate, $c_1(\xi)$ is torsion, and there are only finitely many simple Reeb orbits. Section~\ref{sec:franks} reviews the theorem of Franks and completes the proof of Theorem~\ref{thm:main}. Finally, Section~\ref{sec:final} proves Theorem~\ref{thm:nontorsion}. The appendix clarifies the facts from Seiberg-Witten theory that are needed to yield infinitely many holomorphic curves counted by the $U$ map on ECH.

\section{Embedded contact homology preliminaries}
\label{sec:ech}

Let $Y$ be a closed connected three-manifold, let $\lambda$ be a nondegenerate contact form on $Y$, and let $\Gamma\in H_1(Y)$. We now review the definition of the embedded contact homology $ECH_*(Y,\lambda,\Gamma)$, and the facts about this that we will need. More details about ECH can be found in the lecture notes \cite{bn}.

Roughly speaking, $ECH_*(Y,\lambda,\Gamma)$ is built from finite sets of simple Reeb orbits with multiplicities with total homology class $\Gamma$. For the proof of Theorem~\ref{thm:main}, we will just need to consider the case $\Gamma=0$; however we will need to work with other classes $\Gamma$ in the proof of Theorem~\ref{thm:nontorsion}. 

\subsection{Holomorphic curves and currents}
\label{sec:currents}

We say that an almost complex structure $J$ on $\R\times Y$ is {\bf $\lambda$-compatible\/} if $J$ is $\R$-invariant; $J(\partial_s)=R$ where $s$ denotes the $\R$ coordinate on $\R\times Y$ and $R$ denotes the Reeb vector field as usual; and $J(\xi)=\xi$, rotating $\xi$ positively with respect to $d\lambda$. Fix a $\lambda$-compatible $J$.

We consider $J$-holomorphic curves of the form $u:(\Sigma,j)\to(\R\times Y,J)$ where the domain $(\Sigma,j)$ is a punctured compact Riemann surface. Here the domain $\Sigma$ is not necessarily connected, and we say that $u$ is {\bf irreducible\/} if $\Sigma$ is connected. If $\gamma$ is a (not necessarily simple) Reeb orbit, a {\bf positive end\/} of $u$ at $\gamma$ is a puncture near which $u$ is asymptotic to $\R\times\gamma$ as $s\to\infty$, and a {\bf negative end\/} of $u$ at $\gamma$ is a puncture near which $u$ is asymptotic to $\R\times \gamma$ as $s\to -\infty$; see \cite[\S3.1]{bn} for more details. We assume that each puncture is a positive end or a negative end as above. We mod out by the usual equivalence relation on holomorphic curves, namely composition with biholomorphic maps between domains. Under this equivalence relation, if $u$ is somewhere injective, then $u$ is determined by its image $C=u(\Sigma)$, and in this case we often abuse notation to denote $u$ by its image $C$.

If $u$ is a $J$-holomorphic curve as above, its {\bf Fredholm index\/} is defined by
\begin{equation}
\label{eqn:Fredholmindex}
\op{ind}(u) = -\chi(\Sigma) + 2c_\tau(u) + \op{CZ}_\tau^{\op{ind}}(u).
\end{equation}
Here $\tau$ is a symplectic trivialization of the contact structure $\xi$ over the Reeb orbits at which $u$ has ends. The term $c_\tau(u)$ denotes the relative first Chern class of $u^*\xi$ with respect to $\tau$, see \cite[\S3.2]{bn}. Finally, suppose that $u$ has $k$ positive ends at (not necessarily simple) Reeb orbits $\gamma_1^+,\ldots,\gamma_k^+$, and $l$ negative ends at Reeb orbits $\gamma_1^-,\ldots,\gamma_j^-$. Then the last term is defined by
\[
\op{CZ}_\tau^{\op{ind}}(u) = \sum_{i=1}^k\op{CZ}_\tau(\gamma_i^+) - \sum_{j=1}^l\op{CZ}_\tau(\gamma_j^-).
\]
Here if $\gamma$ is a Reeb orbit and $\tau$ is a trivialization of $\gamma^*\xi$, then $\op{CZ}_\tau(\gamma)$ denotes the Conley-Zehnder index of $\gamma$ with respect to $\tau$. In our three-dimensional situation, this is given by
\begin{equation}
\label{eqn:CZequation}
\op{CZ}_\tau(\gamma) = \floor{\theta} + \ceil{\theta}
\end{equation}
where $\theta$ denotes the rotation number with respect to $\tau$ of the linearized Reeb flow along $\gamma$; see \cite[\S3.2]{bn}. The Fredholm index does not depend on the choice of trivialization $\tau$. The significance of the Fredholm index is that if $J$ is generic and if $u$ is irreducible and somewhere injective, then the moduli space of $J$-holomorphic curves near $u$ is a manifold of dimension $\op{ind}(u)$.

Sometimes we wish to mod out by a further equivalence relation, declaring two $J$-holomorphic curves to be equivalent if they represent the same current in $\R\times Y$. In this case we refer to an equivalence class as a {\bf $J$-holomorphic current\/}. A $J$-holomorphic current is described by a finite set of pairs $\mc{C}=\{(C_k,d_k)\}$, where the $C_k$ are distinct irreducible somewhere injective $J$-holomorphic curves as above, which we refer to as the ``components'' of $\mc{C}$, and the $d_k$ are positive integers which we refer to as the ``multiplicities'' of these components.

\subsection{Definition of embedded contact homology}
\label{sec:echdef}

We are now ready to define the embedded contact homology $ECH(Y,\lambda,\Gamma)$. This is the homology of a chain complex $ECC_*(Y,\lambda,\Gamma)$ over $\Z/2$ defined as follows\footnote{It is also possible to define ECH with $\Z$ coefficients, as explained in \cite[\S9]{obg2}, but this is not necessary for the applications so far.}. An {\bf orbit set\/} is a finite set of pairs $\alpha = \lbrace (\alpha_i,m_i) \rbrace$ where the $\alpha_i$ are distinct simple Reeb orbits, and the $m_i$ are positive integers.  The orbit set $\alpha$ is {\bf admissible} if $m_i=1$ whenever $\alpha_i$ is hyperbolic. The total homology class of the orbit set $\alpha$ is defined by
\[
[\alpha] \eqdef \sum_i m_i [\alpha_i] \in H_1(Y).
\]
The chain complex $ECC_*(Y,\lambda,\Gamma)$ is now freely generated over $\Z/2$ by admissible orbit sets $\alpha$ with total homology class $[\alpha]=\Gamma$. We sometimes write an orbit set $\alpha = \lbrace (\alpha_i,m_i) \rbrace$ as a commutative product $\alpha = \prod_i \alpha_i^{m_i}$, and we usually refer to an admissible orbit set as an {\bf ECH generator}.

The differential on the chain complex $ECC_*(Y,\lambda,\Gamma)$ depends on the additional choice of a generic $\lambda$-compatible almost complex structure $J$. If $\alpha=\{(\alpha_i,m_i)\}$ and $\beta=\{(\beta_j,n_j)\}$ are (not necessarily admissible) orbit sets with $[\alpha]=[\beta]\in H_1(Y)$, let $\M^J(\alpha,\beta)$ denote the set of $J$-holomorphic currents as in \S\ref{sec:currents} with positive ends at covers of $\alpha_i$ with total covering multiplicity $m_i$, negative ends at covers of $\beta_j$ with total covering multiplicity $n_j$, and no other punctures.

Continuing to assume that $[\alpha]=[\beta]$, let $H_2(Y,\alpha,\beta)$ denote the set of 2-chains $Z$ in $Y$ with $\partial Z = \sum_im_i\alpha_i - \sum_jn_j\beta_j$, modulo boundaries of 3-chains. The set $H_2(Y,\alpha,\beta)$ is an affine space over $H_2(Y)$, and each current $\mc{C}\in \M^J(\alpha,\beta)$ determines a relative homology class $[\mc{C}]\in H_2(Y,\alpha,\beta)$.

Given $Z\in H_2(Y,\alpha,\beta)$, we define the {\bf ECH index\/}
\begin{equation}
\label{eqn:defI}
I(\alpha,\beta,Z) = c_\tau(Z) + Q_\tau(Z) +  \sum_i\sum_{k=1}^{m_i}\op{CZ}_\tau(\alpha_i^k) - \sum_j\sum_{l=1}^{n_j}\op{CZ}_\tau(\beta_j^l).
\end{equation}
Here $\tau$ is a trivialization of $\xi$ over the Reeb orbits $\alpha_i$ and $\beta_j$, and $c_\tau$ denotes the relative first Chern class as before. The integer $Q_\tau(Z)$ is the ``relative self-intersection number'' reviewed in \cite[\S3.3]{bn}.
In the Conley-Zehnder index terms, if $\gamma$ is a Reeb orbit and $k$ is a positive integer, then $\gamma^k$ denotes the Reeb orbit which is a $k$-fold cover of $\gamma$.

The ECH index does not depend on the choice of trivialization $\tau$. However the ECH index sometimes does depend on the relative homology class $Z$. Namely, if $[\alpha]=[\beta]=\Gamma\in H_1(Y)$, and if $Z,Z'\in H_2(Y,\alpha,\beta)$, then the difference $Z-Z'$ is an element of $H_2(Y)$, and we have the ``index ambiguity formula''
\begin{equation}
\label{eqn:indexambiguity}
I(\alpha,\beta,Z) - I(\alpha,\beta,Z') = \langle c_1(\xi) + 2 \op{PD}(\Gamma), Z - Z' \rangle,
\end{equation}
see \cite[Eq. 3.6]{bn}.  The ECH index is also additive in the following sense: If $\delta$ is another orbit set with $[\alpha]=[\beta]=[\delta]$, if $Z\in H_2(Y,\alpha,\beta)$, and if $W\in H_2(Y,\beta,\delta)$, then $Z+W\in H_2(Y,\alpha,\delta)$ is defined and
\begin{equation}
\label{eqn:Iadditive}
I(\alpha,\beta,Z) + I(\beta,\delta,W) = I(\alpha,\delta,Z+W),
\end{equation}
see \cite[\S3.4]{bn}.

Given a current $\mc{C}\in\M^J(\alpha,\beta)$, we define its ECH index $I(\mc{C}) = I(\alpha,\beta,[\mc{C}])$. We also write $c_\tau(\mc{C})=c_\tau([\mc{C}])$ and $Q_\tau(\mc{C}) = Q_\tau([\mc{C}])$. If $k$ is an integer, we define $\M^J_k(\alpha,\beta)$ to be the set of $\mc{C}\in\M^J(\alpha,\beta)$ with ECH index $I(\mc{C})=k$.

The significance of the ECH index is that it bounds the Fredhom index via thefollowing index inequality, explained in \cite[\S3.4]{bn}: If $C\in\M^J(\alpha,\beta)$ has no multiply covered components, then
\begin{equation}
\label{eqn:ii}
\op{ind}(C) \le I(C) - 2\delta(C).
\end{equation}
Here $\delta(C)$ is a count of the singularities of $C$ with positive integer multiplicities.

The index inequality \eqref{eqn:ii} leads to the following classification of holomorphic currents with low ECH index. Below, define a {\bf trivial cylinder\/} to be a cylinder $\R\times\gamma\subset\R\times Y$ where $\gamma$ is a simple Reeb orbit. A trivial cylinder is an embedded $J$-holomorphic curve for any $\lambda$-compatible $J$.

\begin{proposition}
\label{prop:lowI}
\cite[Prop.\ 3.7]{bn}
Let $J$ be a generic $\lambda$-compatible almost complex structure. Let $\alpha$ and $\beta$ be orbit sets with $[\alpha]=[\beta]$ and let $\mc{C}\in \mc{M}^J(\alpha,\beta)$. Then:
\begin{description}
\item{(0)} $I(\mc{C})\ge 0$, with equality if and only if each component of $\mc{C}$ is a trivial cylinder.
\item{(1)} If $I(\mc{C})=1$, then $\mc{C}=\mc{C}_0\sqcup C_1$, where $I(\mc{C}_0)=0$, and $C_1$ is embedded and does not include any trivial cylinders and has $\op{ind}(C_1)=I(C_1)=1$.
\item{(2)} If $\alpha$ and $\beta$ are admissible and $I(\mc{C})=2$, then $\mc{C}=\mc{C}_0\sqcup C_1$, where $I(\mc{C}_0)=0$, and $C_1$ is embedded and does not include any trivial cylinders and has $\op{ind}(C_1)=I(C_1)=2$.
\end{description}
\end{proposition}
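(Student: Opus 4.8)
This proposition is a standard consequence of the ECH index inequality \eqref{eqn:ii}, and the plan is to bound $I(\mc{C})$ below by the total Fredholm index of the non-trivial components of $\mc{C}$, and then inspect the cases $I(\mc{C})\le 2$. I would record two inputs at the outset. First, evaluating \eqref{eqn:defI} directly shows that any trivial cylinder, and any cover of one, has ECH index $0$ and Fredholm index $0$; these will be collected into the piece $\mc{C}_0$. Second, for generic $J$ every irreducible somewhere injective curve is regular, so the moduli space containing it is a manifold of dimension equal to its Fredholm index. A curve that is not $\R$-invariant moves in at least a one-parameter family under the $\R$-action, so any somewhere injective component of $\mc{C}$ that is not a trivial cylinder has $\op{ind}\ge 1$.

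Suppose first that $\mc{C}$ has no multiply covered components, so that \eqref{eqn:ii} applies to $\mc{C}$ itself. Since the Fredholm index is additive over components and trivial cylinders contribute $0$, this gives $\sum_k\op{ind}(C_k)=\op{ind}(\mc{C})\le I(\mc{C})-2\delta(\mc{C})$, where the sum is effectively over the non-trivial components. Combined with $\op{ind}(C_k)\ge 1$ for each such component and $\delta(\mc{C})\ge 0$, this forces $I(\mc{C})\ge 0$, with equality only when there are no non-trivial components and $\delta=0$; hence $\mc{C}$ is a union of trivial cylinders, proving (0). If $I(\mc{C})=1$, the bound leaves room for exactly one non-trivial component $C_1$, with $\op{ind}(C_1)=1$ and $\delta(\mc{C})=0$, so that $C_1$ is embedded and disjoint from the remaining trivial cylinders, which form $\mc{C}_0$; that $I(C_1)=1$ as well is part of the index comparison addressed below. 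This gives (1). If $I(\mc{C})=2$, the same count forces the non-trivial part $C_1$ to be embedded with $\op{ind}(C_1)=I(C_1)=2$ and $\delta=0$, giving (2).

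Two points in this outline require more than \eqref{eqn:ii}, and together they are the main obstacle. The first is the comparison between the ECH index of $\mc{C}$ and of its sub-currents: because the Conley--Zehnder and relative self-intersection terms in \eqref{eqn:defI} are not additive, one must verify that adjoining trivial cylinders never decreases the index, which is what pins down $I(\mc{C}_0)=0$ and $I(C_1)=\op{ind}(C_1)$ above. The second, and more serious, is that \eqref{eqn:ii} is valid only for currents without multiply covered components, so multiply covered \emph{non-trivial} components must be excluded separately in the low-index cases (covers of trivial cylinders being harmless by the first input of the previous paragraph). Both points are handled by the full index-inequality package: the writhe bound for the asymptotic braid at each end, together with the partition conditions governing how the total covering multiplicity at each Reeb orbit is distributed among the ends. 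The conclusion of that analysis is that a multiply covered non-trivial component contributes strictly more to $I(\mc{C})$ than cases (0)--(2) allow, so none can occur; this is exactly where the admissibility hypothesis enters in (2), since the partition conditions fix the multiplicity at a hyperbolic orbit and admissibility is what rules out the one remaining index-$2$ configuration built from a covered component. Establishing these bounds on the writhe and the partition conditions is the genuinely delicate ingredient; granting them, the classification follows from the multiplicity-free computation above.
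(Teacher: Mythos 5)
The paper does not prove Proposition~\ref{prop:lowI}; it quotes it from \cite[Prop.\ 3.7]{bn}, so your attempt can only be compared with the standard argument there. Your outline has the correct skeleton of that argument --- separate the trivial cylinders into $\mc{C}_0$, use genericity to get $\op{ind}\ge 1$ for every nontrivial somewhere injective irreducible component, and feed this into the index inequality \eqref{eqn:ii} --- and you correctly locate the two places where \eqref{eqn:ii} alone does not suffice. The problem is that those two places are not loose ends to be ``granted'': they are the substance of the proposition, and your description of how they are resolved is too vague to constitute a proof and is in one place inaccurate.

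Concretely: first, to get $I(C_1)\le I(\mc{C})$ (and hence $I(C_1)=\op{ind}(C_1)$) you need a superadditivity statement for the ECH index of a \emph{union} of currents, of the shape $I(\mc{C}_0\cup C_1)\ge I(\mc{C}_0)+I(C_1)+2\#(\mc{C}_0\cap C_1)$. The paper only records additivity under composition \eqref{eqn:Iadditive}; the union version is a separate lemma of \cite{ir} requiring its own writhe/linking estimates, and it is needed even in your ``clean'' case, because $\mc{C}_0$ typically contains trivial cylinders with multiplicity $>1$, so \eqref{eqn:ii} never applies to $\mc{C}$ as a whole. Second, excluding multiply covered nontrivial components requires the extension of the index lower bound to currents with multiplicities (\cite[Thm.\ 5.1]{ir}), giving roughly $I(\mc{C}')\ge\sum_k d_k\op{ind}(C_k)$ for the nontrivial part; this is a genuinely different theorem from \eqref{eqn:ii}, which you neither state nor establish. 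Moreover, your claim that a covered nontrivial component ``contributes strictly more to $I(\mc{C})$ than cases (0)--(2) allow'' is false in case (2): a doubly covered index-$1$ component contributes exactly $2$ to $\sum_k d_k\op{ind}(C_k)$, which is precisely why the admissibility hypothesis is needed there and nowhere else. The actual exclusion goes through index parity: an index-$1$ somewhere injective curve has an odd number of ends at orbits with even Conley--Zehnder index (positive hyperbolic orbits or even covers of negative hyperbolic ones), hence at least one such end, so taking it with multiplicity $2$ forces total multiplicity $\ge 2$ at a hyperbolic orbit, contradicting admissibility of $\alpha$ and $\beta$. This parity step is absent from your write-up, and without it the case analysis for $I(\mc{C})=2$ does not close.
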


In particular, it follows from Proposition~\ref{prop:lowI}(1) that $\mc{M}^J_1(\alpha,\beta)/\R$ is a discrete set, where $\R$ acts on $\mc{M}^J(\alpha,\beta)$ by translation of the $\R$ factor in $\R\times Y$.
The differential on the ECH chain complex $ECC(Y,\lambda,\Gamma)$ is now defined as follows: Choose a generic $\lambda$-compatible $J$. Given an admissible orbit set $\alpha$ with $[\alpha]=\Gamma$, define
\[
\partial_J\alpha=\sum_{\beta}\#(\mc{M}^J_1(\alpha,\beta)/\R)\beta.
\]
Here the sum is over admissible orbit sets $\beta$ with $[\beta]=\Gamma$, and `$\#$' denotes the mod 2 count. It is shown in \cite[\S5.3]{bn} that $\partial_J$ is well-defined, and in \cite[Thm.\ 7.20]{obg1} that $\partial_J^2=0$. The embedded contact homology $ECH_*(Y,\lambda,\Gamma)$ is now defined to be the homology of the chain complex $(ECC_*(Y,\lambda,\Gamma),\partial_J)$. Although the differential $\partial_J$ may depend on $J$, the homology of the chain complex does not; see Theorem~\ref{thm:taubes} below.

The ECH index induces a relative $\Z/d$ grading on the chain complex $ECC_*(Y,\lambda,\Gamma)$, where $d$ denotes the divisibility of the cohomology class $c_1(\xi)+2\op{PD}(\Gamma)$ in $H^2(Y;\Z)$ mod torsion. If $\alpha$ and $\beta$ are generators with $[\alpha]=[\beta]=\Gamma$, the grading difference between $\alpha$ and $\beta$ is defined by
\[
I(\alpha,\beta) = I(\alpha,\beta,Z)
\]
where $Z\in H_2(Y,\alpha,\beta)$. The relative grading does not depend on $Z$ as a result of the index ambiguity formula \eqref{eqn:indexambiguity}. By definition, the differential $\partial_J$ decreases the relative grading by $1$.

There is also an absolute $\Z/2$ grading $I_2$ on the chain complex defined as follows: If $\alpha=\{(\alpha_i,m_i)\}$ is an admissible orbit set, then $I_2(\alpha)$ is the mod 2 count of orbits $\alpha_i$ that are positive hyperbolic. This is compatible with the relative grading $I$ in the sense that if $[\alpha]=[\beta]$, then
\begin{equation}
\label{eqn:parity}
I(\alpha,\beta) \equiv I_2(\alpha) - I_2(\beta) \mod 2,
\end{equation}
see \cite[Prop.\ 1.6(c)]{pfh2}.

\subsection{The $U$ map}
\label{sec:Ucurves}

Embedded contact homology has various additional structures on it. One such structure that will play a crucial role in this paper is the {\bf {\em U\/} map}, a degree $-2$ map
\begin{equation}
\label{eqn:Udef}
U: ECH_*(Y,\lambda,\Gamma) \longrightarrow ECH_{*-2}(Y,\lambda,\Gamma).
\end{equation}
To define this, choose a base point $z\in Y$ which is not on the image of any Reeb orbit, and let $J$ be a generic $\lambda$-compatible almost complex structure. One then defines a map
\[
U_{J,z}: ECC_*(Y,\lambda,\Gamma) \longrightarrow ECH_{*-2}(Y,\lambda,\Gamma)
\]
as follows: if $\alpha$ and $\beta$ are ECH generators, then the coefficient $\langle U_{J,z}\alpha,\beta\rangle$ is the mod 2 count of $J$-holomorphic currents in $\M^J_2(\alpha,\beta)$ that pass through the point $(0,z)\in\R\times Y$.

As explained in \cite[\S2.5]{wh}, the map $U_{J,z}$ is a chain map, and we define the $U$ map \eqref{eqn:Udef} to be the induced map on homology. Our assumption that $Y$ is connected implies that $U$ does not depend on the choice of $z$. The $U$ map does not depend on $J$ either by Theorem~\ref{thm:taubes} below.

If $\alpha$ and $\beta$ are ECH generators and if $\mc{C}\in\M^J_2(\alpha,\beta)$ is a $J$-holomorphic current counted by the chain map $U_{J,z}$, then we refer to $\mc{C}$ as a {\bf {\em U\/}-curve\/}. By Proposition~\ref{prop:lowI}, a $U$-curve has the form
\[
\mc{C} = \mc{C}_0 \sqcup C_1
\]
where $\mc{C}_0$ is a union of trivial cylinders with multiplicities, and $C_1$ is embedded and satisfies $\op{ind}(C_1)=I(C_1)=2$. Moreover, $C_1$ is irreducible by \cite[Lem.\ 2.6(b)]{wh}.

\subsection{The isomorphism with Seiberg-Witten theory} 
\label{sec:Taubes}

{\em A priori\/}, the embedded contact homology $ECH_*(Y,\lambda,\Gamma)$ could depend on the choice of generic $\lambda$-compatible $J$, so strictly speaking we should denote it by $ECH_*(Y,\lambda,\Gamma,J)$. The $U$ map could also depend on $J$. In fact, these depend only on $Y$, $\Gamma$, and the contact structure $\xi$, as a result of the following theorem of Taubes:

\begin{theorem}
\label{thm:taubes}
\cite{echswf}
Let $Y$ be a closed connected three-manifold with a nondegenerate contact form $\lambda$, and let $\Gamma\in H_1(Y)$.
Then for any generic $\lambda$-compatible almost complex structure $J$, there is a canonical isomorphism of relatively graded $\Z/2$-modules\footnote{One can also obtain an isomorphism with $\Z$ coefficients, see \cite{e3}.}
\begin{equation}
\label{eqn:taubes}
ECH_*(Y,\lambda,\Gamma,J) \stackrel{\simeq}{\longrightarrow} \widehat{HM}^{-*}(Y,\frak{s}_{\xi}+\op{PD}(\Gamma); \Z/2)
\end{equation}
which preserves the $U$ maps on both sides.
\end{theorem}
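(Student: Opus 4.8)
The plan is to realize both sides of \eqref{eqn:taubes} through a common deformation and to construct the isomorphism at the level of chain complexes, following Taubes's strategy. I would introduce a large real parameter $r$ and perturb the three-dimensional Seiberg-Witten equations on $Y$, for the $\Spinc$ structure $\frak{s}_\xi + \op{PD}(\Gamma)$ determined by $\xi$, by a term proportional to $r$ times the contact form $\lambda$ (together with a small generic abstract perturbation). The point of this $r$-dependent perturbation is that, for the resulting equations, one component of the spinor is forced to be nearly nonvanishing, while the zero locus of the remaining component concentrates, as $r \to \infty$, along closed orbits of the Reeb vector field $R$. Since $\widehat{HM}$ is a topological invariant it is insensitive to this perturbation, so it suffices to compute it using the $r \gg 0$ equations.

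The first main step is to establish a bijection between the generators of the two complexes. One shows that, for $r$ sufficiently large, the irreducible solutions of the perturbed Seiberg-Witten equations are in canonical correspondence with the ECH generators, that is, admissible orbit sets $\alpha$ with $[\alpha] = \Gamma$. The local model transverse to each Reeb orbit is the symplectic vortex equation, whose solutions are indexed by a multiplicity $m_i$; the admissibility condition $m_i = 1$ for hyperbolic $\alpha_i$ emerges from the requirement that the corresponding Seiberg-Witten solution be nondegenerate, hence cut out transversally and so contribute to the Floer complex. One must also account for the reducible solutions, which do not correspond to orbit sets but whose contribution is controlled by passing to the appropriate ``hat'' version of monopole Floer homology; here the nondegeneracy of $\lambda$ guarantees that the relevant irreducible solutions are nondegenerate for generic data.

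The second main step is to match the differentials and the $U$ map. For the differential, I would show that the index-one instanton solutions on $\R \times Y$ counted by the Seiberg-Witten Floer differential converge, as $r \to \infty$, to the embedded index-one $J$-holomorphic currents of Proposition~\ref{prop:lowI}(1) counted by $\partial_J$, and conversely that each such current is the limit of a unique family of Seiberg-Witten trajectories; this requires a compactness theorem together with an inverse gluing theorem. The $U$ map is handled by the same mechanism applied to index-two configurations passing through the base point $(0,z)$: the geometric $U$ map on $\widehat{HM}$ is defined by cutting down with the divisor associated to $z$, and under the correspondence this matches the ECH $U$ map of \S\ref{sec:Ucurves}. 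Finally, the relative gradings are matched by a spectral-flow computation identifying the Seiberg-Witten grading difference with the ECH index $I(\alpha,\beta,Z)$, which also pins down the overall reversal $* \mapsto -*$ in \eqref{eqn:taubes}.

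The hard part is entirely in the $r \to \infty$ analysis. Establishing the generator bijection requires Taubes's sharp estimates showing that, for large $r$, solutions localize near Reeb orbits with the expected vortex structure, while spurious nonconcentrating solutions are excluded. Matching the differentials is harder still: one needs compactness for the instantons that is uniform in $r$, exponential convergence to the holomorphic limit, and a gluing construction that is bijective rather than merely surjective, with all multiplicities and (mod $2$) signs accounted for. Controlling the interaction with the reducible locus, and verifying that the $r \to \infty$ limit is compatible with the chain homotopies proving invariance of $\widehat{HM}$, are the most technically demanding points, and they are where the bulk of the work in \cite{echswf} is concentrated.
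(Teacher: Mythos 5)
The paper does not prove this statement: it is quoted verbatim from Taubes's work \cite{echswf} (with the $U$-map compatibility from the sequels), and no argument for it appears anywhere in the text. Your outline is an accurate roadmap of Taubes's actual strategy in that reference --- the large-$r$ perturbation by $\lambda$, localization of the spinor's zero locus along Reeb orbits, the vortex model giving the generator bijection, and the compactness/gluing matching of differentials and of the $U$ map via the point constraint --- so it is ``the same approach,'' but as written it is a statement of the proof's architecture rather than a proof: every substantive step (the a priori estimates forcing concentration, nondegeneracy of the large-$r$ solutions, uniform-in-$r$ compactness, the bijective gluing, and the spectral-flow computation matching gradings with the ECH index) is deferred, which is precisely where the several hundred pages of \cite{echswf} and its sequels are spent.
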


Here $\widehat{HM}^{*}(Y,\frak{s}; \Z/2)$ is a version of Seiberg-Witten Floer cohomology with $\Z/2$ coefficients defined by Kronheimer-Mrowka \cite{km}, which depends on a closed oriented connected three-manifold $Y$ together with a spin-c structure $\frak{s}$. This has a relative $\Z/d$ grading, where $d$ denotes the divisibility of $c_1(\frak{s})$ in $H^2(Y;\Z)$ mod torsion.  In the theorem, $\frak{s}_{\xi}$ denotes the spin-c structure determined by the oriented $2$-plane field $\xi$, see e.g.\ \cite[\S2.8]{bn}. We have
\begin{equation}
\label{eqn:spinc}
c_1(\frak{s}_\xi+\op{PD}(\Gamma)) = c_1(\xi) + 2\op{PD}(\Gamma),
\end{equation}
so that both sides of \eqref{eqn:taubes} have the same type of relative grading. Also, the group $\widehat{HM}^*(Y,\frak{s}; \Z/2)$ is equipped with a canonical degree $2$ map which is also denoted by $U$.

In addition to implying topological invariance of ECH, Theorem~\ref{thm:taubes}, combined with known results about Seiberg-Witten Floer cohomology, implies nontriviality results for ECH. In particular, the following proposition provides an abundant supply of $U$-curves which will be needed in the proof of the main theorem. Below, define a {\bf $U$-sequence\/} to be an infinite sequence $\{\sigma_k\}_{k\ge 1}$ of nonzero homogeneous classes in ECH such that $U\sigma_{k+1}=\sigma_k$ for each $k\ge 1$. Also, use the canonical $\Z/2$ grading $I_2$ on ECH to decompose
\[
ECH_*(Y,\lambda,\Gamma) = ECH_{\op{even}}(Y,\lambda,\Gamma) \oplus ECH_{\op{odd}}(Y,\lambda,\Gamma).
\]

\begin{proposition}
\label{prop:Useq}
Let $Y$ be a closed connected three-manifold with a nondegenerate contact form $\lambda$, and let $\Gamma\in H_1(Y)$ such that $c_1(\xi) + 2\op{PD}(\Gamma)\in H^2(Y;\Z)$ is torsion. Then:
\begin{description}
\item{(a)} There exists a $U$-sequence in $ECH_*(Y,\lambda,\Gamma)$.
\item{(b)} If $b_1(Y)>0$, then there exist $U$-sequences in both $ECH_{\op{even}}(Y,\lambda,\Gamma)$ and $ECH_{\op{odd}}(Y,\lambda,\Gamma)$.
\end{description}
\end{proposition}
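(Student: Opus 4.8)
The plan is to transport the whole problem to Seiberg--Witten Floer cohomology via Theorem~\ref{thm:taubes} and then read off the desired towers from the structure of $\widehat{HM}^*$ for a torsion spin-c structure. Write $\frak{s}=\frak{s}_\xi+\op{PD}(\Gamma)$, so that by \eqref{eqn:spinc} the hypothesis that $c_1(\xi)+2\op{PD}(\Gamma)$ is torsion says exactly that $c_1(\frak{s})$ is torsion. The isomorphism \eqref{eqn:taubes} identifies $ECH_*(Y,\lambda,\Gamma)$ with $\widehat{HM}^{-*}(Y,\frak{s};\Z/2)$ and intertwines the two $U$ maps; under this identification a $U$-sequence in $ECH$ is precisely a sequence of nonzero homogeneous classes $\tau_k\in\widehat{HM}^*(Y,\frak{s})$ with $\deg\tau_{k+1}=\deg\tau_k-2$ and $U\tau_{k+1}=\tau_k$. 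Moreover the isomorphism respects the canonical $\Z/2$-gradings, matching $I_2$ on the ECH side, and on both sides $U$ has even degree and hence preserves the $\Z/2$-grading; so for (b) it suffices to produce two such sequences in $\widehat{HM}^*$, one of each $\Z/2$-parity.

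Next I would invoke the structural facts about $\widehat{HM}^*$ for torsion spin-c structures established by Kronheimer--Mrowka \cite{km} and recorded in the appendix. The two inputs I need are: first, the ``bar'' version $\overline{HM}^*(Y,\frak{s};\Z/2)$ is nonzero and $U$-periodic, meaning that $U\colon\overline{HM}^*\to\overline{HM}^{*+2}$ is an isomorphism; and second, a finiteness statement guaranteeing a $U$-equivariant isomorphism $\widehat{HM}^*(Y,\frak{s})\cong\overline{HM}^*(Y,\frak{s})$ in all sufficiently negative degrees, the reduced groups being supported in only finitely many degrees. The $S^3$ model, in which $\overline{HM}^*\cong\Z/2[U,U^{-1}]$ and $\widehat{HM}^*$ is the truncation of this two-sided tower to nonpositive degrees, illustrates the shape of both facts and is consistent with the grading conventions of \eqref{eqn:taubes}.

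For part (a) I would then proceed as follows. Choose $N$ so large that $\widehat{HM}^*\cong\overline{HM}^*$ for every $*\le -N$. Since $\overline{HM}^*$ is nonzero and $U$-periodic, fix a nonzero class $\bar\tau\in\overline{HM}^{d}$ with $d\le -N$, set $\bar\tau_1=\bar\tau$, and define $\bar\tau_{k+1}=U^{-1}\bar\tau_k$ using $U$-periodicity to invert $U$. Each $\bar\tau_k$ is nonzero and lies in degree $d-2(k-1)\le -N$, hence in the range where the map to $\widehat{HM}^*$ is an isomorphism; pulling the $\bar\tau_k$ back through this isomorphism produces nonzero classes $\tau_k\in\widehat{HM}^*$ with $U\tau_{k+1}=\tau_k$, the required $U$-sequence, which transports back to a $U$-sequence in $ECH$ through \eqref{eqn:taubes}. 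For part (b), when $b_1(Y)>0$ I would use the finer description of the bar group, namely that $\overline{HM}^*(Y,\frak{s};\Z/2)$ carries the exterior algebra $\Lambda^*\!\left(H^1(Y;\Z/2)\right)$ as a tensor factor alongside the two-sided $U$-tower. Since $b_1(Y)>0$ forces $H^1(Y;\Z/2)\neq 0$, this exterior algebra has nonzero pieces in both even and odd degree (for instance $1$ and any nonzero degree-one class), so $\overline{HM}^*$ contains nonzero classes of both $\Z/2$-parities; running the construction of part (a) from a nonzero bar class of each parity, which is legitimate because $U$ preserves the $\Z/2$-grading, yields $U$-sequences in both $ECH_{\op{even}}(Y,\lambda,\Gamma)$ and $ECH_{\op{odd}}(Y,\lambda,\Gamma)$.

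I expect the main obstacle to be entirely on the Seiberg--Witten side: establishing the $U$-periodicity and nontriviality of $\overline{HM}^*$ for torsion spin-c structures, the finiteness that makes $\widehat{HM}^*\cong\overline{HM}^*$ in sufficiently negative degrees, and the precise matching of the canonical $\Z/2$-gradings (and, for (b), the exterior-algebra structure) between the two theories. These are exactly the facts that the appendix is meant to extract from \cite{km} and \cite{echswf}; once they are in hand, the deduction above is formal homological algebra with the $U$ map.
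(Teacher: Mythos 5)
Your overall route is the same as the paper's: transport the statement to $\widehat{HM}^*(Y,\frak{s})$ via Theorem~\ref{thm:taubes}, reduce to the ``bar'' group $\overline{HM}^*(Y,\frak{s})$ using the exact triangle (equivalently, the vanishing of $\widecheck{HM}^*$ in sufficiently negative degree), and then exploit the $\Z[U,U^{-1}]$-module structure of $\overline{HM}^*$ to build the towers. Part (a) of your argument is sound modulo the Seiberg--Witten inputs you explicitly defer, and those inputs are exactly what the appendix extracts from \cite{km}.

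There is, however, a genuine gap in part (b): the claim that $\overline{HM}^*(Y,\frak{s})$ carries $\Lambda^*\left(H^1(Y;\Z/2)\right)$ as a tensor factor alongside the two-sided $U$-tower is false in general. For a torsion spin-c structure, $\overline{HM}_*(Y,\frak{s})\otimes\R$ is computed (via coupled Morse homology on the Picard torus) as the homology of the twisted complex $\left(\Omega^*({\mathbb T}^{b_1(Y)})\otimes\R[U,U^{-1}],\ d+U\eta\wedge\right)$, where $\eta$ encodes the triple cup product on $Y$; the group you describe is only the page of the associated spectral sequence \emph{before} the differential $d_3\colon x\mapsto U(\eta\wedge x)$ is applied. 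When the triple cup product is nontrivial --- e.g.\ for $Y=T^3$ --- this differential kills classes (there the surviving part is $(\Lambda^1\oplus\Lambda^2)\otimes\R[U,U^{-1}]$, not the full exterior algebra), so one cannot simply point to $1$ and a degree-one class to produce both parities. The conclusion is still true, but it requires a different argument: since $\chi\left(\Lambda^*H^1\right)=0$ when $b_1(Y)>0$, the homology of the twisted complex has Euler characteristic zero as a $\Z/2$-graded $\R[U,U^{-1}]$-module, and since it is nonvanishing by \cite[Cor.\ 35.1.3]{km}, it must be nonzero in \emph{both} parities. That Euler characteristic step is the essential content of the paper's proof of part (b), and it is missing from your proposal.
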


The proof of Proposition~\ref{prop:Useq} is given in Appendix~\ref{app:swfacts}. We can now use this proposition to give:

\begin{proof}[Proof of Proposition~\ref{prop:odd}.] Without loss of generality, $Y$ is connected.
Choose $\Gamma$ such that $c_1(\xi) + 2 \op{PD}(\Gamma)$ is torsion (such a $\Gamma$ always exists). By Proposition~\ref{prop:Useq}(b), there exists a $U$-sequence in $ECH_{\op{odd}}(Y,\lambda,\Gamma)$. In particular, $ECH_{\op{odd}}(Y,\lambda,\Gamma)$ is nontrivial, so there exists an ECH generator $\alpha=\{(\alpha_i,m_i)\}$ with $[\alpha]=\Gamma$ and $I_2(\alpha)=1$. From the definition of $I_2$, it follows that at least one of the simple Reeb orbits $\alpha_i$ is positive hyperbolic.
\end{proof}

\subsection{Partition conditions}
\label{sec:partition}

The nontrivial components of $U$-curves satisfy specific constraints on their asymptotics which we will need to take into account. To state these, let $\gamma$ be a simple Reeb orbit and let $m$ be a positive integer. 
We now define two partitions of $m$, the ``positive partition'' $p^+_\gamma(m)$ and the ``negative partition'' $p^-_\gamma(m)$, as follows. Let $\theta\in\R$ be the rotation number of $\gamma$ as in \eqref{eqn:CZequation} with respect to some trivialization $\tau$ of $\xi|_\gamma$. We then define $p^\pm_\gamma(m)=p^\pm_\theta(m)$, where $p^\pm_\theta(m)$ is defined as follows.

Let $\Lambda_\theta^+(m)$ denote the maximal polygonal path in the plane from $(0,0)$ to $(m,\floor{m\theta})$ with vertices at lattice points which is the graph of a concave function and which does not go above the line $y=\theta x$. Then $p_\theta^+(m)$ consists of the horizontal components of the segments of the path $\Lambda_\theta^+(m)$ connecting consecutive lattice points. The partition $p_\theta^-(m)$ is obtained likewise from $\Lambda_\theta^-(m)$, which is the minimal polygonal path in the plane from $(0,0)$ to $(m,\ceil{m\theta})$ with vertices at lattice points which is the graph of a convex function and which does not go below the line $y=\theta x$. Equivalently,
\[
p_\theta^-(m) = p_{-\theta}^+(m).
\]
Note that the partition $p_\theta^\pm(m)$ depends only on the congruence class of $\theta\in\R/\Z$, which does not depend on the choice of trivialization $\tau$.

For example, if $\gamma$ is positive hyperbolic, then $\theta$ is an integer, and it follows that
\begin{equation}
\label{eqn:partitionh+}
p^+_{\gamma}(m)=p^-_{\gamma}(m)=(1,\ldots,1).
\end{equation}
If $\gamma$ is negative hyperbolic, then $\theta\equiv 1/2\mod\Z$, and it follows that
\begin{equation}
\label{eqn:partitionh-}
p_\gamma^+(m) = p_\gamma^-(m) = \left\{\begin{array}{cl} (2,\ldots,2), & \mbox{$m$ even,}\\ (2,\ldots,2,1), & \mbox{$m$ odd.}
\end{array}\right.
\end{equation}
If $\gamma$ is elliptic, then our usual assumption that all Reeb orbits are nondegenerate implies that $\theta$ is irrational, and it then turns out that $p_\theta^+(m)$ and $p_\theta^-(m)$ are disjoint whenever $m>1$, see \cite[Ex.\ 3.13]{bn}.

The significance of these partitions is as follows. Let $\alpha=\{(\alpha_i,m_i)\}$ and $\beta=\{(\beta_j,n_j)\}$ be orbit sets, and suppose that $C\in\M^J(\alpha,\beta)$ has no multiply covered components. Then for each $i$, the curve $C$ has ends at covers of $\alpha_i$ with total multiplicity $m_i$, and these multiplicities determine a partition of $m_i$, which we denote by $p_{\alpha_i}^+(C)$. Likewise, for each $j$, the asymptotics of the negative ends of $C$ at covers of $\beta_j$ determine a partition of $n_j$, which we denote by $p_{\beta_j}^-(C)$. We then have:

\begin{proposition}
\label{prop:partitions}
\cite[\S3.9]{bn}
Suppose that $C\in\M^J(\alpha,\beta)$ has no multiply covered components and that equality holds in the index inequality \eqref{eqn:ii}. Then for each $i$ we have $p_{\alpha_i}^+(C)=p_{\alpha_i}^+(m_i)$, and for each $j$ we have $p_{\beta_j}^-(C) = p_{\beta_j}^-(n_j)$.
\end{proposition}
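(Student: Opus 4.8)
The plan is to establish the partition conditions as the equality case of a refined, end-by-end derivation of the index inequality \eqref{eqn:ii} itself. Since $C$ has no multiply covered components it is somewhere injective, so I may apply the relative adjunction formula, which expresses the relative first Chern class of a somewhere injective curve as
\[
c_\tau(C) = \chi(C) + Q_\tau(C) + w_\tau(C) - 2\delta(C),
\]
where $w_\tau(C)$ is the total writhe of the braids that the ends of $C$ form near the Reeb orbits $\alpha_i$ and $\beta_j$ with respect to $\tau$, and $\delta(C)\ge 0$ counts the singularities of $C$. Substituting this into the definition \eqref{eqn:Fredholmindex} of $\op{ind}(C)$ and the definition \eqref{eqn:defI} of $I(C)$, the terms involving $\chi(C)$ and $Q_\tau(C)$ cancel, leaving
\[
\op{ind}(C) = I(C) - 2\delta(C) + w_\tau(C) + \CZ^{\op{ind}}_\tau(C) - \CZ^I_\tau(C),
\]
where $\CZ^I_\tau(C) = \sum_i\sum_{k=1}^{m_i}\CZ_\tau(\alpha_i^k) - \sum_j\sum_{l=1}^{n_j}\CZ_\tau(\beta_j^l)$ is the ECH Conley--Zehnder sum and $\CZ^{\op{ind}}_\tau(C)$ is the Fredholm Conley--Zehnder sum taken over the actual ends of $C$. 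Thus equality in \eqref{eqn:ii} is equivalent to the vanishing of $\CZ^I_\tau(C) - \CZ^{\op{ind}}_\tau(C) - w_\tau(C)$, and the key structural point is that this quantity splits as a sum of local contributions, one for each orbit $\alpha_i$ and $\beta_j$, each depending only on the partition of the multiplicity recorded by the ends of $C$ at that orbit.

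To bound each local contribution I would invoke the asymptotic analysis of Hofer--Wysocki--Zehnder. Near a nondegenerate Reeb orbit every end of $C$ converges exponentially, and the leading asymptotic eigenfunction of the associated asymptotic operator has a well-defined winding number relative to $\tau$; monotonicity of the winding number in the eigenvalue bounds it above by $\floor{a\theta}$ for a positive end of covering multiplicity $a$ at an orbit of rotation number $\theta$, and below by $\ceil{b\theta}$ for a negative end of multiplicity $b$. Feeding these bounds into the standard estimate of the writhe of a braid and of the linking numbers between distinct ends, the writhe contribution at each orbit is bounded in terms of the partition alone, and one obtains
\[
\CZ^I_\tau(C) - \CZ^{\op{ind}}_\tau(C) - w_\tau(C) \ge \sum_i g^+_{\theta_i}\!\big(p^+_{\alpha_i}(C)\big) + \sum_j g^-_{\theta_j}\!\big(p^-_{\beta_j}(C)\big),
\]
where for a partition $(a_1,\ldots,a_k)$ of $m$ the function $g^+_\theta$ is the explicit combinatorial expression
\[
g^+_\theta(a_1,\ldots,a_k) = \sum_{r=1}^{m}\CZ_\tau(\gamma^r) - \sum_{l}\CZ_\tau(\gamma^{a_l}) - \sum_l (a_l-1)\floor{a_l\theta} - 2\sum_{l<l'}\min\!\big(a_l\floor{a_{l'}\theta}, a_{l'}\floor{a_l\theta}\big),
\]
and $g^-_\theta$ is its analogue built from ceilings and associated to the convex rather than concave path.

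The heart of the matter, and the step I expect to be the main obstacle, is the purely combinatorial lemma that $g^\pm_\theta(p)\ge 0$ for every partition $p$ of $m$, with equality if and only if $p = p^\pm_\theta(m)$. This is where the concave/convex lattice path characterization in the statement is essential: I would encode a partition $(a_1,\ldots,a_k)$ as a lattice path whose $l$-th edge has horizontal component $a_l$ and vertical component $\floor{a_l\theta}$, and show that $g^+_\theta$ measures precisely the failure of this path to coincide with the maximal concave path $\Lambda^+_\theta(m)$ lying on or below $y = \theta x$. The basic inputs are the superadditivity $\floor{(a+a')\theta}\ge\floor{a\theta}+\floor{a'\theta}$ and the companion estimates controlling the pairwise linking terms; the content of the lemma is a careful bookkeeping showing that any splitting or merging of edges that moves the path away from the extremal concave configuration strictly increases $g^+_\theta$, with the dual statement for $g^-_\theta$ and the convex path $\Lambda^-_\theta(m)$. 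Granting this lemma, equality in \eqref{eqn:ii} forces each summand $g^\pm$ to vanish, whence $p^+_{\alpha_i}(C) = p^+_{\alpha_i}(m_i)$ and $p^-_{\beta_j}(C) = p^-_{\beta_j}(n_j)$ for every $i$ and $j$, as claimed. The hyperbolic cases, where $\theta\in\Z$ or $\theta\in\frac12+\Z$, can be handled directly and reduce to the explicit partitions \eqref{eqn:partitionh+} and \eqref{eqn:partitionh-}; the genuinely delicate combinatorial content is the elliptic case with $\theta$ irrational.
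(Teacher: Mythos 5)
The paper gives no proof of this proposition---it is quoted from \cite[\S3.9]{bn} (ultimately from \cite{pfh2} and \cite{ir})---and your reconstruction follows essentially the same route as those sources: the relative adjunction formula reduces equality in \eqref{eqn:ii} to sharpness of the asymptotic writhe bounds, the Hofer--Wysocki--Zehnder winding estimates supply the $\floor{a\theta}$ and $\ceil{b\theta}$ controls, and the orbit-by-orbit defect $g^{\pm}_\theta$ is nonnegative with equality exactly at the concave/convex extremal partition. Your sketch of that last combinatorial lemma (lattice-path encoding of a partition, superadditivity of $m\mapsto\floor{m\theta}$, and separate treatment of the hyperbolic cases via \eqref{eqn:partitionh+} and \eqref{eqn:partitionh-}) is precisely the argument carried out in the cited references, so the proposal is correct in outline, with that lemma rightly flagged as the one step still requiring full detail.
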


In particular, if $\mc{C}=\mc{C}_0\sqcup C_1$ is a $U$-curve, then by Proposition~\ref{prop:lowI}(2), we know that Proposition~\ref{prop:partitions} is applicable to the nontrivial component $C_1$.

We note one simple fact about the partitions which will be needed later.   
Let $\gamma$ be a simple Reeb orbit and let $m$ be a positive integer. Recall that without any choice of trivialization, $\gamma$ has a well-defined rotation number $\theta\in\R/\Z$. In particular, 
$\floor{m\theta}$ and $\ceil{m\theta}$ are well-defined elements of $\Z/m\Z$.

\begin{lemma}
\label{lem:relprime}
Let $\gamma$ be a simple Reeb orbit with rotation number $\theta\in\R/\Z$, and let $m$ be a positive integer.
\begin{description}
\item{(a)}
If $p_\gamma^+(m)=(m)$, then $\op{gcd}(m,\floor{m\theta})=1$.
\item{(b)}
If $p_\gamma^-(m)=(m)$, then $\op{gcd}(m,\ceil{m\theta})=1$.
\end{description}
\end{lemma}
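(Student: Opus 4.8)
The plan is to prove both parts directly from the definition of the partitions as lattice paths, reducing everything to the elementary fact that a lattice segment with no interior lattice points has coprime coordinate differences. The one point that requires care—and which I regard as the main (if mild) obstacle—is the correct reading of the phrase ``segments \ldots connecting consecutive lattice points'': each edge of $\Lambda_\theta^+(m)$ runs between two lattice points with \emph{no} lattice point strictly in its interior, so that the parts of $p_\theta^+(m)$ are exactly the horizontal widths of these primitive edges. (For instance, when $\theta$ is a small positive number the path along $y=0$ from $(0,0)$ to $(m,0)$ is subdivided at every integer point, yielding $p_\theta^+(m)=(1,\ldots,1)$ rather than $(m)$.) Once ``one part'' is understood to mean ``one primitive edge,'' the rest is a short computation.

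For part (a), I would argue as follows. The hypothesis $p_\theta^+(m)=(m)$ says the partition has a single part, so $\Lambda_\theta^+(m)$ consists of a single edge. Since the path by definition runs from $(0,0)$ to $(m,\floor{m\theta})$, this single edge is the segment joining these two points, and having only one part means it contains no lattice point in its interior. The number of lattice points strictly between $(0,0)$ and $(m,n)$ on the segment joining them is $\op{gcd}(m,n)-1$; applying this with $n=\floor{m\theta}$ gives $\op{gcd}(m,\floor{m\theta})-1=0$, that is, $\op{gcd}(m,\floor{m\theta})=1$, as desired.

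For part (b), rather than repeat the argument with the minimal convex path $\Lambda_\theta^-(m)$, I would invoke the identity $p_\theta^-(m)=p_{-\theta}^+(m)$ recorded above. Thus $p_\theta^-(m)=(m)$ is equivalent to $p_{-\theta}^+(m)=(m)$, and part (a) applied to the rotation number $-\theta$ yields $\op{gcd}(m,\floor{-m\theta})=1$. Since $\floor{-m\theta}=-\ceil{m\theta}$ and $\op{gcd}(m,-k)=\op{gcd}(m,k)$, this is exactly $\op{gcd}(m,\ceil{m\theta})=1$ (alternatively, one repeats the segment-counting argument of part (a) verbatim, using that $\Lambda_\theta^-(m)$ terminates at $(m,\ceil{m\theta})$). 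I would close by checking well-definedness for $\theta\in\R/\Z$: replacing $\theta$ by $\theta+1$ increases $m\theta$ by $m$, hence increases both $\floor{m\theta}$ and $\ceil{m\theta}$ by $m$, so their classes in $\Z/m\Z$—and therefore $\op{gcd}(m,\floor{m\theta})$ and $\op{gcd}(m,\ceil{m\theta})$—are unchanged, consistent with the fact that $p_\theta^\pm(m)$ depends only on $\theta\bmod\Z$. The genuinely substantive content of the lemma is thus entirely contained in the primitivity observation.
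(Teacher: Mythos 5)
Your proof is correct and follows essentially the same route as the paper: both reduce the hypothesis $p_\gamma^{\pm}(m)=(m)$ to the statement that $\Lambda_\theta^{\pm}(m)$ is a single primitive edge from $(0,0)$ to $(m,\floor{m\theta})$ (resp.\ $(m,\ceil{m\theta})$) and conclude coprimality from the absence of interior lattice points, with (b) handled by symmetry. Your explicit appeal to the count $\op{gcd}(m,n)-1$ of interior lattice points and the identity $\floor{-m\theta}=-\ceil{m\theta}$ just spells out what the paper leaves implicit.
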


\begin{proof}
Let $\theta\in\R$ be the rotation number of $\gamma$ for some trivialization of $\xi|_\gamma$. If $p_\gamma^+(m)=(m)$, then this means that the path $\Lambda_\theta^+(m)$ consists of the single edge from $(0,0)$ to $(m,\floor{m\theta})$, and this edge has no lattice point in its interior. It follows that $\floor{m\theta}$ is relatively prime to $m$. This proves (a), and (b) is proved by a symmetric argument.
\end{proof}

\subsection{The $J_0$ index}
\label{sec:J0}

We now recall a variant of the ECH index which is useful for bounding the topological complexity of holomorphic curves. 

Let $\alpha=\{(\alpha_i,m_i)\}$ and $\beta=\{(\beta_j,n_j)\}$ be orbit sets with $[\alpha]=[\beta]$, and let $Z\in H_2(Y,\alpha,\beta)$. We then define the ``$J_0$ index''
\begin{equation}
\label{eqn:defJ0}
J_0(\alpha,\beta,Z) = -c_\tau(Z) + Q_\tau(Z) + 
\sum_i\sum_{k=1}^{m_i-1}\op{CZ}_\tau(\alpha_i^k) - \sum_j\sum_{l=1}^{n_j-1}\op{CZ}_\tau(\beta_j^l).
\end{equation}
Here $\tau$, $c_\tau$, and $Q_\tau$ are defined as in \eqref{eqn:defI}.

Like the ECH index $I$, the $J_0$ index \eqref{eqn:defJ0} does not depend on the choice of trivialization $\tau$. However $J_0$ depends on $Z$ in the following way, similarly to the index ambiguity formula \eqref{eqn:indexambiguity}. If $\alpha$ and $\beta$ are orbit sets with $[\alpha]=[\beta]=\Gamma$, and if $Z,Z'\in H_2(Y,\alpha,\beta)$, then the ambiguity in the relative first Chern class is given by
\begin{equation}
\label{eqn:ctauambiguity}
c_\tau(Z) - c_\tau(Z') = \langle c_1(\xi),Z-Z' \rangle.
\end{equation} 
This follows from the definition of the relative first Chern class in \cite[\S3.2]{bn}.  By \eqref{eqn:ctauambiguity}, together with \eqref{eqn:defI}, \eqref{eqn:indexambiguity}, \eqref{eqn:defJ0}, we obtain
\begin{equation}
\label{eqn:J0ambiguity}
J_0(\alpha,\beta,Z) - J_0(\alpha,\beta,Z') = \langle -c_1(\xi) + 2 \op{PD}(\Gamma), Z - Z' \rangle.
\end{equation}

If $\mc{C}\in \M^J(\alpha,\beta)$, we write $J_0(\mc{C}) = J_0(\alpha,\beta,[\mc{C}])$. If $\mc{C}$ is a $U$-curve, then the integer $J_0(\mc{C})$ gives the following bound on the topological complexity of $\mc{C}$. Write $\alpha=\{(\alpha_i,m_i)\}$ and $\beta=\{(\beta_j,n_j)\}$ and $\mc{C}=\mc{C}_0\sqcup C_1$ as usual. Let $n_i^+$ denote the number of positive ends of $C_1$ at covers of $\alpha_i$, plus $1$ if $\mc{C}_0$ includes a cover of $\R\times\alpha_i$. Let $n_j^-$ denote the number of negative ends of $C_1$ at covers of $\beta_j$, plus $1$ if $\mc{C}_0$ includes a cover of $\R\times\beta_j$.

\begin{proposition}
\label{prop:obscure}
\cite[Lem.\ 3.5]{wh},
\cite[Prop.\ 5.8]{bn}
Let $(Y,\lambda)$ be a nondegenerate contact three-manifold and let $J$ be a generic $\lambda$-compatible almost complex structure.
Let $\alpha=\{(\alpha_i,m_i)\}$ and $\beta=\{(\beta_j,n_j)\}$ be ECH generators and let $\mc{C}=\mc{C}_0\sqcup C_1\in\M^J(\alpha,\beta)$ be a $U$-curve. Then
\begin{equation}
\label{eqn:obscure}
J_0(\mc{C}) = -\chi(C_1) + \sum_i(n_i^+-1) + \sum_j(n_j^--1).
\end{equation}
\end{proposition}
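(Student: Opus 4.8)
The plan is to reduce everything to the nontrivial embedded component $C_1$ via the relative adjunction formula, and then to evaluate the resulting Conley--Zehnder and writhe contributions using the partition conditions. First I would record the behavior of the three ingredients of $J_0$ under the splitting $\mc{C}=\mc{C}_0\sqcup C_1$. The relative first Chern class is additive and vanishes on trivial cylinders, so $c_\tau(\mc{C})=c_\tau(C_1)$. The relative self-intersection number $Q_\tau$ is quadratic in the relative homology class, so $Q_\tau(\mc{C})=Q_\tau(C_1)+2\ell_\tau(\mc{C}_0,C_1)$, where $\ell_\tau(\mc{C}_0,C_1)$ is the associated bilinear relative linking term between the covers of trivial cylinders in $\mc{C}_0$ and the ends of $C_1$ (the self-term $Q_\tau(\mc{C}_0)$ again vanishes on trivial cylinders). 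This linking term is precisely the mechanism that will produce the ``$+1$'' corrections built into the definitions of $n_i^+$ and $n_j^-$.

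Next I would apply the relative adjunction formula from \cite{bn} to $C_1$. Since $C_1$ is embedded by Proposition~\ref{prop:lowI}(2), its singularity count $\delta(C_1)$ vanishes, so adjunction gives $c_\tau(C_1)=\chi(C_1)+Q_\tau(C_1)+w_\tau(C_1)$, where $w_\tau(C_1)$ is the asymptotic writhe. Combining this with the previous step yields
\[
J_0(\mc{C}) = -\chi(C_1) - w_\tau(C_1) + 2\ell_\tau(\mc{C}_0,C_1) + \sum_i\sum_{k=1}^{m_i-1}\op{CZ}_\tau(\alpha_i^k) - \sum_j\sum_{l=1}^{n_j-1}\op{CZ}_\tau(\beta_j^l).
\]
Thus it remains to show that everything except the term $-\chi(C_1)$ collapses to the pure end counts $\sum_i(n_i^+-1)+\sum_j(n_j^--1)$.

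To do this I would use that $\op{ind}(C_1)=I(C_1)$ by Proposition~\ref{prop:lowI}(2), so equality holds in the index inequality \eqref{eqn:ii} and the partition conditions of Proposition~\ref{prop:partitions} apply to $C_1$. The writhe $w_\tau(C_1)$ decomposes as a sum over the ends of $C_1$ at the various $\alpha_i$ and $\beta_j$, and $\ell_\tau(\mc{C}_0,C_1)$ decomposes similarly over those orbits that $\mc{C}_0$ covers. For each orbit the partition conditions identify the relevant braid with the one realizing $p^+_{\alpha_i}$ (resp. $p^-_{\beta_j}$) of the multiplicity actually seen by $C_1$, and the writhe of such a braid is computed explicitly from the lattice-path description of the partitions together with the Conley--Zehnder formula \eqref{eqn:CZequation}. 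Carried out orbit by orbit, the writhe and linking contributions combine with the truncated Conley--Zehnder sums and cancel, leaving exactly the number of positive ends of $C_1$ at $\alpha_i$ minus one (augmented by one, through the linking term, when $\mc{C}_0$ covers $\R\times\alpha_i$), and symmetrically for the negative ends. This parallels the standard derivation of the relation $\op{ind}=I-2\delta$ under the partition conditions.

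The main obstacle is this final orbit-by-orbit combinatorial identity: one must verify that the asymptotic writhe of the partition braid, the linking numbers with the trivial cylinders, and the truncated Conley--Zehnder sums conspire to give precisely $n_i^+-1$ and $n_j^--1$. The delicate points are the correct sign and normalization of $w_\tau$ relative to $c_\tau$ in the adjunction formula, and the bookkeeping for trivial cylinders: when $\mc{C}_0$ contains $\R\times\alpha_i$ with positive multiplicity, the multiplicity of $\alpha_i$ realized by $C_1$ drops, so the partition governing its ends is that of the \emph{reduced} multiplicity, and the ``missing'' portion of the Conley--Zehnder sum must be supplied exactly by the linking term $2\ell_\tau(\mc{C}_0,C_1)$. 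Verifying this accounting is where essentially all the work lies, the key number-theoretic input being the concavity/convexity of the lattice paths $\Lambda^{\pm}_\theta$ defining the partitions $p^{\pm}_\theta$.
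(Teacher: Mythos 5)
The paper does not prove this proposition; it is quoted from \cite[Lem.\ 3.5]{wh} and \cite[Prop.\ 5.8]{bn}. Your outline --- splitting off the trivial cylinders via additivity of $c_\tau$ and quadraticity of $Q_\tau$, applying relative adjunction to the embedded component $C_1$ with $\delta(C_1)=0$, and then evaluating the writhe, the linking with $\mc{C}_0$, and the truncated Conley--Zehnder sums via the partition conditions (which hold because $\op{ind}(C_1)=I(C_1)$) --- is exactly the strategy of those cited proofs, and it is sound. The only caveat is that you defer the orbit-by-orbit identity in which the writhe and linking terms cancel against the truncated $\op{CZ}$ sums to leave $n_i^+-1$ and $n_j^--1$; you correctly identify this as where all the work lies, and it is carried out in the references using the same lattice-path concavity/convexity input you name.
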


\subsection{Spectral invariants and the volume property}
\label{sec:asymp}

It follows from the isomorphism \eqref{eqn:taubes} that the embedded contact homology $ECH_*(Y,\lambda,\Gamma)$ is a topological invariant.  However $ECH$ can be used to extract finer information in the form of real numbers depending on the contact form, using a filtration on the $ECH$ chain complex by the symplectic action.

If $\alpha=\{(\alpha_i,m_i)\}$ is an orbit set, its {\bf symplectic action\/} is defined by
\[
\mathcal{A}(\alpha) = \sum_i m_i \int_{\alpha_i} \lambda.
\]
The ECH differential decreases the symplectic action, i.e.\ if the coefficient $\langle\partial_J\alpha,\beta\rangle\neq 0$ then $\mc{A}(\alpha)>\mc{A}(\beta)$, see \cite[\S1.4]{bn}.
Consequently, for any $L\in \R$ we can define the {\bf filtered ECH\/}
\begin{equation}
\label{eqn:ECHL}
ECH^L_*(Y,\lambda,\Gamma)
\end{equation}
to be the homology of the subcomplex of $ECC_*(Y,\lambda,\Gamma)$ generated by orbit sets $\alpha$ with $\mathcal{A}(\alpha) < L$. There is a natural map
\begin{equation}
\label{eqn:imathL}
\imath_L: ECH^L_*(Y,\lambda,\Gamma) \longrightarrow ECH_*(Y,\lambda,\Gamma)
\end{equation}
induced by inclusion of chain complexes. It is shown in \cite[Thm.\ 1.3]{cc2} that the filtered ECH \eqref{eqn:ECHL} and the inclusion-induced map \eqref{eqn:imathL} do not depend on the choice of almost complex structure $J$.

We can now define, for each nonzero class $\sigma\in ECH_*(Y,\lambda,\Gamma)$, a ``spectral invariant''
\[
c_{\sigma}(Y,\lambda) = \op{inf} \left\{ L \;\big|\; \sigma \in \op{Im}\left(\imath_L\right)\right\}.
\]
Equivalently, $c_\sigma(Y,\lambda)$ is the smallest real number $L$ such that the class $\sigma$ can be represented by a cycle in the chain complex $ECC_*(Y,\lambda,\Gamma)$ which is a sum of ECH generators each with action $\le L$. In particular,
\begin{equation}
\label{eqn:actionrepresentation}
c_{\sigma}(Y,\lambda)=\mathcal{A}(\alpha)
\end{equation}
for some orbit set $\alpha$ which is a generator of the chain complex $ECC_*(Y,\lambda,\Gamma)$. Another useful property is that if $U\sigma\neq 0$ then
\begin{equation}
\label{eqn:actionU}
c_{U\sigma}(Y,\lambda) < c_{\sigma}(Y,\lambda).
\end{equation} 
This holds because the chain map $U_{J,z}$, like the differential, decreases symplectic action.

We are assuming above that the contact form $\lambda$ is nondegenerate. In fact, the spectral numbers $c_\sigma$ are $C^0$-continuous functions of the contact form, so one can extend them to degenerate contact forms by taking limits, see \cite[\S4.1]{qech}. When $\lambda$ is degenerate, we make sense of the `$\sigma$' in $c_\sigma$ by using the topological invariance in \eqref{eqn:taubes} to identify $ECH_*(Y,\lambda,\Gamma)$ with $ECH_*(Y,\lambda',\Gamma)$ where $\lambda'$ is a nondegenerate contact form with the same contact structure as $\lambda$. For degenerate contact forms, property \eqref{eqn:actionrepresentation} still holds, where $\alpha$ is some orbit set with $[\alpha]=\Gamma$. Property \eqref{eqn:actionU} holds in the degenerate case under the additional assumption that there are only finitely many simple Reeb orbits, see \cite[Lem.\ 3.1]{two}.

A deeper property of the spectral numbers $c_\sigma$, which will play a key role in the proof of the main theorem, is the following relation between their asymptotics and the contact volume
\[
\op{vol}(Y,\lambda) = \int_Y \lambda \wedge d\lambda.
\]
Recall from \S\ref{sec:echdef} that if $c_1(\xi) + 2 \op{PD}(\Gamma)\in H^2(Y;\Z)$ is torsion, then $ECH_*(Y,\lambda,\Gamma)$ has a relative $\Z$-grading.

\begin{theorem}\cite[Thm. 1.3]{vc}
\label{thm:vc}
Let $Y$ be a closed connected three-manifold with a contact form $\lambda$, let $\Gamma\in H_1(Y)$ with $c_1(\xi) + 2 \op{PD}(\Gamma)$ torsion, and let $I$ be any  refinement of the relative $\mathbb{Z}$-grading on $ECH_*(Y,\lambda,\Gamma)$ to an absolute $\Z$-grading.  Then for any sequence of nonzero homogeneous classes $\{\sigma_k\}_{k\ge 1}$ in $ECH_*(Y,\lambda,\Gamma)$ with $\lim_{k\to\infty}I(\sigma_k)=+\infty$, we have
\begin{equation}
\label{eqn:volumeformula}
\lim_{k \to \infty} \frac{c_{\sigma_k}(Y,\lambda)^2}{I(\sigma_k)} = \op{vol}(Y,\lambda).
\end{equation}
\end{theorem}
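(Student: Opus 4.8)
The plan is to reduce the statement to a single asymptotic relation between the grading and the symplectic action of ECH generators, and to establish that relation through Seiberg--Witten theory. First, since $c_\sigma$ and $\vol$ are $C^0$-continuous in $\lambda$ (the former by \S\ref{sec:asymp}) while the grading is a topological quantity under the identification \eqref{eqn:taubes}, I would reduce to the case that $\lambda$ is nondegenerate, recovering the general case by taking limits. Next, because $c_1(\xi)+2\op{PD}(\Gamma)$ is torsion, $I$ is a genuine absolute $\Z$-grading, and by \eqref{eqn:actionrepresentation} every nonzero homogeneous class $\sigma$ is represented by a generator $\alpha$ with $\mc{A}(\alpha)=c_\sigma(Y,\lambda)$ and $I(\alpha)=I(\sigma)$. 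Thus \eqref{eqn:volumeformula} follows from the single estimate, uniform over all ECH generators,
\[
I(\alpha)=\frac{\mc{A}(\alpha)^2}{\vol}\big(1+o(1)\big)\qquad\text{as }\mc{A}(\alpha)\to\infty .
\]

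To prove this estimate I would pass to the Seiberg--Witten side of \eqref{eqn:taubes}. For a large perturbation parameter $r$, Taubes's construction places ECH generators in correspondence with solutions of the $r$-perturbed Seiberg--Witten equations on $Y$, with the key feature that as $r\to\infty$ the zero set of the spinor concentrates along the Reeb orbits of $\alpha$ and the Seiberg--Witten energy converges to a fixed multiple of $\mc{A}(\alpha)$; in particular the energy filtration matches the action filtration, so that $c_\sigma$ is computed on the Seiberg--Witten side. Under \eqref{eqn:taubes} the grading $I(\alpha)$ is identified with the $\widehat{HM}$ grading, which is (minus) the spectral flow of the family of self-adjoint Hessian operators interpolating from a reference configuration to the solution. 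Hence the absolute grading of $\alpha$, or equivalently the relative grading $I(\alpha,\beta)$ between two generators, becomes a spectral flow to be estimated in terms of the actions $\mc{A}(\alpha)$ and $\mc{A}(\beta)$.

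The analytic heart of the argument, and the step I expect to be the main obstacle, is an asymptotic formula for this spectral flow whose leading term is $\mc{A}(\alpha)^2/\vol$. The natural route is to express the spectral flow, following Atiyah--Patodi--Singer and Taubes, through the Chern--Simons--Dirac functional and the eta invariants of the interpolating configurations; as $r\to\infty$ the solution's curvature concentrates near the Reeb orbits and is otherwise close to $-ir\,d\lambda$, and Taubes's estimates then produce the contact volume $\int_Y\lambda\wedge d\lambda$ as the coefficient controlling the growth. The delicate point---more than the leading-order computation---is to bound the error terms uniformly over all generators of large action, so that the estimate holds with a single $o(\mc{A}(\alpha)^2)$ valid across the entire chain complex; this uniform control is the substance of the hard Seiberg--Witten analysis underlying \cite{vc}. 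The output is the two-sided bound
\[
\Big|\,I(\alpha)-\frac{\mc{A}(\alpha)^2}{\vol}\,\Big|=o\!\big(\mc{A}(\alpha)^2\big),
\]
with the implied error uniform over generators (of which, by nondegeneracy, there are finitely many below each action level).

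Finally I would assemble the pieces. Given any sequence of nonzero homogeneous classes $\sigma_k$ with $I(\sigma_k)\to\infty$, choose representing generators $\alpha_k$ with $\mc{A}(\alpha_k)=c_{\sigma_k}(Y,\lambda)$ and $I(\alpha_k)=I(\sigma_k)$. The uniform estimate forces $\mc{A}(\alpha_k)\to\infty$, and applying it to each $\alpha_k$ gives
\[
\frac{c_{\sigma_k}(Y,\lambda)^2}{I(\sigma_k)}=\frac{\mc{A}(\alpha_k)^2}{I(\alpha_k)}\longrightarrow\vol ,
\]
which is \eqref{eqn:volumeformula}; the upper and lower spectral-flow bounds feed the $\limsup$ and $\liminf$ directions respectively, and the continuity reduction of the first step removes the nondegeneracy hypothesis.
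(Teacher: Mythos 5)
First, a remark on scope: the paper under review does not prove Theorem~\ref{thm:vc} at all --- it imports it as a black box from \cite[Thm.~1.3]{vc} --- so what you are sketching is the argument of that external paper, one of the deepest inputs to the present work.

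Your outline correctly names the relevant machinery (the identification of the action filtration with the Seiberg--Witten energy filtration, and spectral-flow estimates via the Chern--Simons functional and Taubes-type concentration), but the reduction in your second paragraph contains a genuine error that the rest of the argument inherits. You reduce \eqref{eqn:volumeformula} to the estimate $I(\alpha)=\mc{A}(\alpha)^2\op{vol}(Y,\lambda)^{-1}(1+o(1))$, ``uniform over all ECH generators.'' This statement is false. The grading and action of an individual orbit set are determined by data localized near its orbits (actions, rotation numbers, relative self-intersection and Chern numbers), whereas $\op{vol}(Y,\lambda)$ is global. Concretely, fix a nondegenerate $\lambda$ with an elliptic simple orbit $\gamma$ and replace $\lambda$ by $f\lambda$ where $f\equiv 1$ on a tubular neighborhood of $\gamma$ but $f$ is large elsewhere: the Reeb flow near $\gamma$, hence $\mc{A}(\gamma^m)$, the rotation numbers, and the quantities entering $I(\gamma^m,\emptyset,Z)$ for a fixed $Z$ are all unchanged, while $\op{vol}=\int_Y f^2\lambda\wedge d\lambda$ changes by as much as you like; so the ratio $I(\gamma^m)/\mc{A}(\gamma^m)^2$ cannot equal $\op{vol}^{-1}(1+o(1))$ for both forms. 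The volume identity is intrinsically a statement about the min-max quantities $c_\sigma$ --- about which generators are \emph{forced} to appear in cycles representing nonzero classes --- and not a pointwise statement about generators. Accordingly, the proof in \cite{vc} establishes no such pointwise asymptotic: it proves the two inequalities in \eqref{eqn:volumeformula} separately, bounding the min-max values directly by working with the perturbed Seiberg--Witten equations at large perturbation parameter $r$ (one of the two inequalities being essentially already in \cite{qech}, the other being the hard Seiberg--Witten estimate relating spectral flow, energy, and the Chern--Simons functional for min-max families, via \cite{cc2}). With the pointwise estimate removed, your final assembly does not go through; the continuity reduction to nondegenerate $\lambda$ in your first step is, however, fine and is how the degenerate case is handled.
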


In particular, if $\{\sigma_k\}_{k\ge 1}$ is a U-sequence, then $I(\sigma_k)=2k+a$ for some constant $a$, so \eqref{eqn:volumeformula} implies that
\begin{equation}
\label{eqn:Useqasymptotics}
\lim_{k \to \infty} \frac{c_{\sigma_k}(Y,\lambda)^2}{k} = 2\op{vol}(Y,\lambda).
\end{equation}

\section{Criteria for a global surface of section}
\label{sec:gss}

The goal of this section is to prove Proposition~\ref{prop:gss} below, which gives criteria under which a holomorphic curve gives rise to a ``global surface of section'' for the Reeb flow. For related statements and proofs, see \cite[Prop.\ 5.1]{hwz1} and \cite[Lem.\ 6.9]{hry}.

\begin{definition}
\label{def:gss}
Let $(Y,\lambda)$ be a contact three-manifold. A {\bf global surface of section\/} for the Reeb flow is an embedded open surface $\Sigma\subset Y$ such that:
\begin{itemize}
	\item The Reeb vector field $R$ is transverse to $\Sigma$.
	\item There is a compact surface with boundary, $\overline{\Sigma}$, such that $\op{int}(\overline{\Sigma})=\Sigma$, and the inclusion $\Sigma\to Y$ extends to a continuous map $g:\overline{\Sigma}\to Y$ such that each boundary circle of $\overline{\Sigma}$ is mapped to the image of a Reeb orbit.
	\item For every $y\in Y\setminus g\left(\partial\overline{\Sigma}\right)$, the Reeb trajectory starting at $y$ intersects $\Sigma$ in both forward and backward time.
\end{itemize}
\end{definition}

\noindent
We will use the following notation. Suppose that $(Y,\lambda)$ is a nondegenerate contact three-manifold and $J$ is a $\lambda$-compatible almost complex structure on $\R\times Y$. If $u$ is a $J$-holomorphic curve in $\R\times Y$ as in \S\ref{sec:currents}, let $g(u)$ denote the genus of the domain of $u$, and let $h_+(u)$ denote the number of ends of $u$ at positive hyperbolic Reeb orbits (including even degree covers of negative hyperbolic orbits). Let $\M^J_u$ denote the component of the moduli space of $J$-holomorphic curves in $\mathbb{R} \times Y$ that contains $u$. Let $\pi_Y:\R\times Y\to Y$ denote the projection.

\begin{proposition}
\label{prop:gss}
Let $(Y,\lambda)$ be a nondegenerate contact three-manifold, and let $J$ be a $\lambda$-compatible almost complex structure on $\R\times Y$. Let $C$ be an irreducible $J$-holomorphic curve in $\R\times Y$ such that:
\begin{description}
	\item{(i)} Every $C'\in\M^J_C$ is embedded\footnote{With more work, one can weaken hypothesis (i) to just assume that $C$ is embedded in $\R\times Y$. However we will not need to do this.} in $\R\times Y$.
	\item{(ii)} $g(C)=h_+(C)=0$ and $\op{ind}(C)=2$.
	\item{(iii)} $C$ does not have two positive ends, or two negative ends, at covers of the same simple Reeb orbit.
	\item{(iv)} Let $\gamma$ be a simple Reeb orbit with rotation number $\theta\in\R/\Z$. If $C$ has a positive end at an $m$-fold cover of $\gamma$, then $\op{gcd}(m,\floor{m\theta})=1$. If $C$ has a negative end at an $m$-fold cover of $\gamma$, then $\op{gcd}(m,\ceil{m\theta})=1$.
	\item{(v)} $\M^J_C/\R$ is compact.
\end{description}
Then $\pi_Y(C)\subset Y$ is a global surface of section for the Reeb flow.
\end{proposition}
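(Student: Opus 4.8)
The plan is to show that the $Y$-projections of the curves in the component $\M^J_C$, which depend only on the class in $\M^J_C/\R$ since $\pi_Y$ is $\R$-invariant, assemble into a foliation of the complement of the asymptotic Reeb orbits, with leaf space a circle and with $R$ everywhere transverse; the three bullet points of Definition~\ref{def:gss} are then a formal consequence, applied to the leaf $\pi_Y(C)$. I would carry this out in three stages: a local analysis of a single curve, a global analysis of the family, and the return-map argument. These follow the template of \cite[Prop.\ 5.1]{hwz1} and \cite[Lem.\ 6.9]{hry}.

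First I would analyze a single $C'\in\M^J_C$ and show that $\pi_Y(C')$ is an embedded surface transverse to $R$ whose closure is a compact surface with boundary mapped to the orbits at the ends of $C'$. The basic input is that for $\lambda$-compatible $J$ the form $u^*d\lambda$ is pointwise nonnegative and vanishes exactly where $du$ has image in the $J$-complex line $\R\partial_s\oplus\R R$; away from such points $\pi_\xi\circ d(\pi_Y\circ u)$ is an isomorphism, so $\pi_Y\circ u$ is an immersion whose image is transverse to $R$. Since $C'$ is not a trivial cylinder (as $\op{ind}(C')=2$), the zeros of $u^*d\lambda$ are isolated with positive local multiplicity by the Carleman similarity principle. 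Hypotheses (ii)--(iv) are precisely the conditions under which the relative adjunction formula and the asymptotic writhe inequalities of \cite{bn} are simultaneously sharp: embeddedness (i) gives $\delta(C')=0$, the conditions $g(C')=h_+(C')=0$ force the count of interior critical points to vanish, and the relative-primality conditions (iv) are the equality case of the writhe bound making each end project to an embedded half-open annulus; finally (iii) guarantees the resulting boundary circles are distinct simple orbits.

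Next I would study the whole family. Because $\op{ind}(C)=2$, $J$ is generic, and the curves are embedded (hence somewhere injective), $\M^J_C$ is a smooth two-manifold, so $\M^J_C/\R$ is a one-manifold; hypothesis (v) makes it compact, and the absence of breaking (again from (v)) means it has no boundary, so being connected it is a circle that parametrizes the candidate leaves. I would then show the leaves are pairwise disjoint: if $\pi_Y(C_1')$ and $\pi_Y(C_2')$ met at $y$, then $C_1'$ and a suitable $\R$-translate $\tau_s C_2'$ would share a point, contributing positively to their algebraic intersection number by positivity of intersections; but for embedded index-two curves in a common homotopy class with the asymptotic data pinned down by (iv), the relative intersection number is zero, a contradiction. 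Taking $C_2'=C_1'$ shows in addition that each leaf is embedded with all its $\R$-translates disjoint. Finally, the union of the leaves is open in $Y$ (the leaves sweep out a three-dimensional family) and, by the compactness in (v) together with Gromov/SFT compactness, closed in $Y$ minus the binding orbits $B$; since $Y\setminus B$ is connected, the leaves foliate it, transverse to $R$.

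With this transverse (finite-energy) foliation in hand, the surface of section property follows classically. Co-orienting the leaves by $R$ yields a strictly monotone motion of the Reeb flow on the circular leaf space, and the open-book behaviour of the foliation near each binding orbit (the leaves spiral into $\gamma$) forces every Reeb trajectory in $Y\setminus B$ to cross the leaf $\Sigma=\pi_Y(C)$ in both forward and backward time, with $\partial\overline{\Sigma}$ mapping exactly to the binding. I expect the main obstacle to be the first stage: precisely verifying that the gcd conditions (iv) are the equality case in the asymptotic writhe/linking estimates, so that each end projects to an embedded annulus with a single boundary circle and no interior tangency to $R$ survives, and then checking the crossing property near the binding orbits. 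The global disjointness-and-filling argument is structurally the key new use of the compactness hypothesis, but it rests squarely on this more delicate asymptotic analysis of the ends.
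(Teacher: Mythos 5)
Your plan follows the same route as the paper: project the curves in $\M^J_C$ to obtain a transverse foliation of $Y$ minus the binding orbits with leaf space $S^1$ (the paper's Proposition~\ref{prop:foliation} and Lemma~\ref{lem:embedding}, where the ``no interior tangency'' statement comes from applying the winding-number bounds to the projection of $\partial_s$ onto the normal bundle --- a nonzero element of $\Ker(D_{C'})$ --- together with $c_N(C')=0$), and then deduce the section property from monotonicity of the induced circle-valued function along the Reeb flow. However, the step you defer --- the crossing property near the binding --- is the genuinely nontrivial part of the deduction, and ``strictly monotone motion on the leaf space'' plus ``the leaves spiral into $\gamma$'' does not yet give it. The problem is that the derivative $Rf$ of the leaf-space coordinate along the Reeb flow is \emph{not} bounded below near the binding orbits (the paper has a footnote on exactly this: generically the first two eigenvalues of the relevant asymptotic operators are distinct and $Rf$ degenerates along the ends), so monotonicity alone is compatible with a trajectory taking infinite time to complete a circuit of the leaf space. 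The paper's fix is quantitative: writing an end of $C$ at $\gamma^m$ via the asymptotic eigenfunction expansion, each time interval $m\mc{A}(\gamma)$ of the flow near $\gamma$ advances the trajectory relative to the leaf by at least $\{m\theta\}$, and this is nonzero precisely because $h_+(C)=0$ forces $m\theta\notin\Z$. This yields a uniform time $T_0$ valid in a neighborhood $V$ of the binding, which is then combined with a compactness lower bound for $Rf$ on $Y\setminus V$ to produce a single $T$ that works everywhere. Without this second, essential use of $h_+(C)=0$ your argument does not close.

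Two smaller points. You invoke genericity of $J$ to conclude that $\M^J_C$ is a smooth two-manifold, but the proposition does not assume $J$ generic; the correct substitute is automatic transversality, since $c_N(C')=0<2=\op{ind}(C')$ for every $C'\in\M^J_C$. Also, in the disjointness step the role of hypothesis (iii) is not to make the boundary circles distinct but to guarantee that a small $\R$-translate of $C$ is disjoint from $C$, so that the translation-invariant intersection number --- computed via $Q_\tau$ and the asymptotic linking bounds made sharp by (iv) --- vanishes; your appeal to ``the relative intersection number is zero'' for embedded index-two curves ``in a common homotopy class'' is not justified without this.
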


\subsection{From a holomorphic curve to a foliation}

To prepare for the proof of Proposition~\ref{prop:gss}, we first need to discuss when holomorphic curves in $\R\times Y$ project to embedded surfaces in $Y$, and when the latter foliate subsets of $Y$. Continue to use the notation preceding Proposition~\ref{prop:gss}. If $u$ is a $J$-holomorphic curve in $\R\times Y$, then following Wendl \cite{wendl-jems}, define the {\bf normal Chern number\/} of $u$ by
\[
c_N(u) = \frac{1}{2}\left(2g(u) - 2 + \op{ind}(u) + h_+(u)\right).
\]
The goal of this subsection is to prove the following:
  
\begin{proposition}
\label{prop:foliation}
Let $(Y,\lambda)$ be a nondegenerate contact three-manifold, and let $J$ be a $\lambda$-compatible almost complex structure\footnote{In Proposition~\ref{prop:foliation} it is not necessary to assume that $J$ is generic.} on $\R\times Y$. Let $C$ be a nontrivial irreducible embedded $J$-holomorphic curve in $\R\times Y$ such that:
\begin{description}
\item{(i)} Every $C'\in\M^J_C$ is embedded in $\R\times Y$.
\item{(ii)} $c_N(C)=0$.
\item{(iii)} $C$ does not have two positive ends, or two negative ends, at covers of the same simple Reeb orbit.
\item{(iv)} Let $\gamma$ be a simple Reeb orbit with rotation number $\theta\in\R/\Z$. If $C$ has a positive end at an $m$-fold cover of $\gamma$, then $\op{gcd}(m,\floor{m\theta})=1$. If $C$ has a negative end at an $m$-fold cover of $\gamma$, then $\op{gcd}(m,\ceil{m\theta})=1$.
\end{description}
Then:
\begin{description}
\item{(a)} For every $C'\in\M^J_C$, the projection of $C'$ to $Y$ is an embedding.
\item{(b)} If $g(C)=h_+(C)=0$, then the projections of the curves $C'\in\M^J_C$ to $Y$ give a foliation of an open subset of $Y$.
\end{description}
\end{proposition}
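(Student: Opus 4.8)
The plan is to deduce both statements from the intersection theory of punctured holomorphic curves and Wendl's analysis of the normal Chern number \cite{wendl-jems}, exploiting the fact that the hypotheses force $c_N$ together with all of its associated non-negative local contributions to vanish. Note first that the genus, the Fredholm index, and the number of positive hyperbolic ends are locally constant on the moduli space, so every $C'\in\M^J_C$ satisfies $c_N(C')=c_N(C)=0$ and has the same asymptotic orbits as $C$; in particular hypotheses (iii) and (iv) hold for every $C'$ as well.

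For part (a), I would analyze the two ways in which the projection $\pi_Y|_{C'}$ can fail to be an embedding. First, $\pi_Y|_{C'}$ fails to be an immersion exactly at points where the tangent plane $TC'$ contains $\partial_s$; since $TC'$ is $J$-invariant and $J\partial_s=R$, such a tangency occurs precisely when $TC'=\op{span}(\partial_s,R)$, and these critical points contribute with positive local multiplicity. Second, $\pi_Y|_{C'}$ fails to be injective at interior double points, which likewise contribute positively. The key point is that, by the relative adjunction formula encoded in the normal Chern number, $c_N(C')$ is the sum of these non-negative interior contributions together with a contribution coming from the ends. Hypothesis (iv) is exactly the condition guaranteeing that each individual end projects to an embedding: if $\op{gcd}(m,\floor{m\theta})=1$ then the positive end at the $m$-fold cover winds around the underlying simple orbit with winding number coprime to $m$, so its projection is an embedded half-cylinder rather than a multiply wrapped one (and symmetrically for negative ends using $\ceil{m\theta}$), while hypothesis (iii) guarantees that distinct ends at the same simple orbit do not collide in the projection. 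Hence the end contribution to $c_N(C')$ is non-negative, and since $c_N(C')=0$ every contribution must vanish, forcing $\pi_Y|_{C'}$ to be an embedding.

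For part (b), the additional hypotheses $g(C)=h_+(C)=0$ give $\op{ind}(C)=2$ directly from the definition of $c_N$, so by Wendl's automatic transversality criterion \cite{wendl-jems} the moduli space $\M^J_C$ is a smooth two-dimensional manifold on which $\R$ acts freely by translation, with quotient $\M^J_C/\R$ a one-manifold. I would then show that any two curves in $\M^J_C$ not related by $\R$-translation are disjoint in $\R\times Y$: their Siefring intersection number is a homotopy invariant whose interior part is controlled, via the adjunction formula, by $c_N(C)=0$ together with the end conditions (iii) and (iv), so it vanishes, and positivity of intersections then upgrades this to literal disjointness. Consequently the curves of $\M^J_C$ foliate an $\R$-invariant open subset $U\subset\R\times Y$. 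Projecting to $Y$, part (a) shows that each leaf projects to an embedded surface; moreover, if $\pi_Y(C')$ and $\pi_Y(C'')$ met at a point $y$, then an appropriate $\R$-translate of $C''$ would pass through the same point of $\R\times Y$ as $C'$ and hence, by disjointness, coincide with $C'$, forcing $\pi_Y(C'')=\pi_Y(C')$. Thus distinct projected leaves are disjoint and foliate the open set $\pi_Y(U)\subset Y$; transversality of this foliation to $R$ follows because $TC'$ is $J$-invariant and contains no nonzero multiple of $\partial_s$, so its projection to $TY$ cannot contain $R=J\partial_s$.

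The main obstacle is the intersection-theoretic bookkeeping at the ends. Identifying $c_N(C')$ with a sum of manifestly non-negative interior and asymptotic contributions, and verifying that hypotheses (iii) and (iv) are precisely what is needed to make the asymptotic contributions non-negative (indeed zero), requires the asymptotic winding estimates and Siefring's intersection pairing underlying Wendl's framework \cite{wendl-jems}. This is the delicate heart of the argument; once the projections are known to be embedded and distinct leaves disjoint, the passage to a foliation of an open subset of $Y$ is essentially formal.
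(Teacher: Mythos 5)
Your overall strategy is the right one and is essentially the paper's: use $c_N=0$ to control the normal section coming from $\partial_s$, use intersection positivity together with the asymptotic data at the ends (where (iii) and (iv) enter) to get disjointness, and use automatic transversality for the $1$-manifold structure of $\M^J_C/\R$. The paper does the bookkeeping with the relative pairing $Q_\tau$ and asymptotic linking numbers rather than Siefring's pairing, but that is a difference of packaging, not of substance. However, there are two concrete gaps. First, the central identity you lean on in part (a) --- that $c_N(C')$ equals a sum of nonnegative contributions from critical points of $\pi_Y|_{C'}$, from interior double points of $\pi_Y|_{C'}$, and from the ends --- is not correct as stated. What $c_N(C')=0$ actually gives you (via the count of zeroes of the normal component of $\partial_s$, which lies in $\Ker(D_{C'})$) is (1) that this section is nowhere vanishing, hence $\pi_Y|_{C'}$ is an immersion, and (2) that the asymptotic winding numbers attain the extremal values $\floor{m_i\theta_i^+}$ and $\ceil{n_j\theta_j^-}$. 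The double points of $\pi_Y|_{C'}$ are not seen by $c_N$ at all; they are intersections of $C'$ with its nontrivial $\R$-translates, and must be killed by showing that the homotopy-invariant count $\#(C'\cap C'')=Q_\tau+\ell_\tau$ vanishes, which is exactly where the extremal windings, hypothesis (iv) (forcing equality in the linking-number bound via the gcd condition), and hypothesis (iii) (giving disjointness for small translates, hence vanishing of the invariant count) are used, followed by intersection positivity. You essentially describe this computation in part (b); you need to run it already in part (a) to get injectivity of the projection, rather than deducing injectivity from a decomposition of $c_N$ that does not hold.

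Second, in part (b) you pass from ``distinct leaves are disjoint and embedded'' directly to ``they foliate an open subset of $Y$.'' Disjointness alone does not give openness of the union or the local product structure: you must also check that the evaluation map $T_{C'}\M^J_C\to (NC')_{(s,y)}$ is an isomorphism at every point $(s,y)\in C'$, equivalently that the induced map $T_{[C']}(\M^J_C/\R)\to (N\pi_Y(C'))_y$ is an isomorphism. This is where the full strength of the statement that \emph{every} nonzero element of $\Ker(D_{C'})$ is nowhere vanishing when $c_N(C')=0$ is needed (not just for the translation direction $\partial_s$), and it is the one ingredient your outline never isolates. Both gaps are repairable with tools you already invoke, but as written the proof of (a) rests on a false identity and the proof of (b) omits the openness step.
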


This proposition is a slight generalization of \cite[Prop.\ 3.4]{wh}, and the ideas in the proof go back to \cite{hwz-emb}.

To prove this proposition, we first need to recall the significance of the normal first Chern number. Let $u$ be an immersed $J$-holomorphic curve in $\R\times Y$ with domain $\Sigma$, and let $N\to\Sigma$ denote the normal bundle to $u$. Then $u$ has a well-defined deformation operator
\[
D_u: L^2_1(\Sigma,N) \longrightarrow L^2(\Sigma,T^{0,1}\Sigma\tensor N),
\]
see e.g. \cite[\S2.3]{bn}. The derivative at $u$ of a one-parameter family of curves in $\M^J_u$ defines an element of $\Ker(D_u)$.

If $\psi\in\Ker(D_u)$ does not vanish identically (and $u$ is irreducible), then $\psi$ has only finitely many zeroes, all of which have positive multiplicity; see the review in \cite[Prop.\ 3.4]{wh}. We can then define winding numbers of $\psi$ around the ends of $u$ as follows. Suppose that $u$ has positive ends at $m_i$-fold covers of simple Reeb orbits $\alpha_i$, and negative ends at $n_j$-fold covers of simple Reeb orbits $\beta_j$. Let $\tau$ be a trivialization of $\xi$ over the Reeb orbits $\alpha_i$ and $\beta_j$. Let $\op{wind}_{i,\tau}^+(\psi)$ denote the winding number of $\psi$ around the positive end of $u$ at $\alpha_i^{m_i}$, as measured using the trivialization $\tau$. Likewise, let $\op{wind}_{j,\tau}^-(\psi)$ denote the winding number of $\psi$ around the negative end of $u$ at $\beta_j^{n_j}$ with respect to $\tau$.

It was shown in \cite{hwz-emb}, see the review in \cite[\S5.1]{bn}, that the above winding numbers are bounded by
\begin{equation}
\label{eqn:windCZ}
\begin{split}
\op{wind}_{i,\tau}^+(\psi) &\le \floor{\CZ_\tau\left(\alpha_i^{m_i}\right)/2}\\
\op{wind}_{j,\tau}^-(\psi) &\ge \ceil{\CZ_\tau \left(\beta_j^{n_j}\right)/2}.
\end{split}
\end{equation}
If $\theta_{i,\tau}^+$ denotes the rotation number of $\alpha_i$ with respect to $\tau$, and if $\theta_{j,\tau}^-$ denotes the rotation number of $\beta_j$ with respect to $\tau$, then we can rewrite the above inequalities as
\begin{equation}
\label{eqn:windtheta}
\begin{split}
\op{wind}_{i,\tau}^+(\psi) &\le \floor{m_i\theta_{i,\tau}^+}\\
\op{wind}_{j,\tau}^-(\psi) &\ge \ceil{n_j\theta_{j,\tau}^-}.
\end{split}
\end{equation}

\begin{lemma}
\label{lem:normalcn}
Let $(Y,\lambda)$ be a nondegenerate contact 3-manifold and let $J$ be a $\lambda$-compatible almost complex structure on $\R\times Y$. Let $u$ be an immersed irreducible $J$-holomorphic curve in $\R\times Y$. Suppose that $c_N(u)=0$. Let $\psi$ be a nonzero element of $\Ker(D_u)$. Then:
\begin{description}
\item{(a)}
$\psi$ is nonvanishing.
\item{(b)}
Equality holds in the inequalities \eqref{eqn:windCZ}.
\end{description}
\end{lemma}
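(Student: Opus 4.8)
The plan is to reduce everything to a single identity that writes $c_N(u)$ as a sum of three manifestly nonnegative integers: the algebraic count of zeros of $\psi$, and the two ``winding defects'' measuring the failure of equality in \eqref{eqn:windCZ}. Since $c_N(u)=0$ by hypothesis, each of these integers must vanish, and this is exactly the content of (a) and (b).

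To set this up, recall first that because $u$ is irreducible and $\psi\in\Ker(D_u)$ is not identically zero, $\psi$ has only finitely many zeros, each of positive multiplicity (see the review in \cite[Prop.\ 3.4]{wh}, going back to \cite{hwz-emb}); write $Z(\psi)\ge 0$ for the total count of zeros with multiplicity. Retaining the notation preceding the lemma, so that $u$ has positive ends at $m_i$-fold covers of $\alpha_i$, negative ends at $n_j$-fold covers of $\beta_j$, and domain $\Sigma$, the argument-principle / relative-degree count for a section of the complex line bundle $N$ gives
\[
Z(\psi) = c_\tau(N) + \sum_i \op{wind}^+_{i,\tau}(\psi) - \sum_j\op{wind}^-_{j,\tau}(\psi),
\]
where the negative ends enter with the opposite sign because the boundary orientation there is reversed.

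Next I would relate $c_\tau(N)$ to $c_N(u)$ using the definition of $c_N$ and the Fredholm index formula \eqref{eqn:Fredholmindex}. Comparing the splittings $u^*T(\R\times Y)\cong T\Sigma\oplus N$ and $u^*T(\R\times Y)\cong\C\oplus u^*\xi$, together with the computation $c_\tau(T\Sigma)=\chi(\Sigma)$ for the asymptotic framing, gives $c_\tau(N)=c_\tau(u)-\chi(\Sigma)$. Substituting this and \eqref{eqn:Fredholmindex} into the definition of $c_N$, and using the three-dimensional formula $\CZ_\tau(\gamma^m)=\floor{m\theta}+\ceil{m\theta}$ from \eqref{eqn:CZequation} for the cover $\gamma^m$, one obtains
\[
c_N(u) = c_\tau(N) + \sum_i\floor{m_i\theta^+_{i,\tau}} - \sum_j\ceil{n_j\theta^-_{j,\tau}}.
\]
The bookkeeping here is the one delicate point: the term $h_+(u)$ in the definition of $c_N$ is precisely what is needed so that, end by end, the discrepancy $\ceil{m\theta}-\floor{m\theta}$ (which equals $1$ for elliptic ends and for odd covers of negative hyperbolic orbits, and $0$ otherwise) combines with the count of positive hyperbolic ends to reproduce the total number of ends. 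The even-cover-of-negative-hyperbolic case, which is itself positive hyperbolic, is exactly the case that $h_+$ is defined to include, and this is what makes the two sides match.

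Combining the two displayed identities yields the key formula
\[
c_N(u) = Z(\psi) + \sum_i\left(\floor{m_i\theta^+_{i,\tau}} - \op{wind}^+_{i,\tau}(\psi)\right) + \sum_j\left(\op{wind}^-_{j,\tau}(\psi) - \ceil{n_j\theta^-_{j,\tau}}\right).
\]
By \eqref{eqn:windtheta} every parenthesized term is nonnegative, and $Z(\psi)\ge 0$, so the hypothesis $c_N(u)=0$ forces $Z(\psi)=0$, proving (a), and forces equality in each bound of \eqref{eqn:windtheta}, equivalently \eqref{eqn:windCZ}, proving (b). I expect the main obstacle to be the second identity rather than the final deduction: getting the signs correct in the relation $c_\tau(N)=c_\tau(u)-\chi(\Sigma)$ and carrying out the elliptic/hyperbolic bookkeeping that the $h_+$ term encodes. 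These computations are essentially those already carried out in \cite[Prop.\ 3.4]{wh}, which I would follow closely.
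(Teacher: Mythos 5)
Your proposal is correct and follows essentially the same route as the paper's proof: both rest on the zero count $\#\psi^{-1}(0)=c_1(N,\tau)+\op{wind}_\tau(\psi)$, the relation $c_1(N,\tau)=c_\tau(u)-\chi(\Sigma)$, the winding bounds \eqref{eqn:windCZ}, and the parity bookkeeping in which $h_+(u)$ absorbs exactly the ends with $\ceil{m\theta}=\floor{m\theta}$. The only cosmetic difference is that you package the computation as an exact identity expressing $c_N(u)$ as a sum of manifestly nonnegative terms, whereas the paper chains the same facts into the inequality $2\#\psi^{-1}(0)\le 2c_N(u)$ and then observes that equality must hold throughout.
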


\begin{proof}
Let $\tau$ be a trivialization of $\xi$ over the Reeb orbits at whose covers $u$ has ends. The algebraic count of zeroes of $\psi$ is then given by
\begin{equation}
\label{eqn:psi0}
\#\psi^{-1}(0) = c_1(N,\tau) + \op{wind}_{\tau}(\psi)
\end{equation}
where
\begin{equation}
\label{eqn:windtaudef}
\op{wind}_\tau(\psi) = \sum_{i}\op{wind}_{i,\tau}^+(\psi) - \sum_{j}\op{wind}_{j,\tau}^-(\psi).
\end{equation}
As in \cite[Lem.\ 3.1(a)]{pfh2}, we have
\begin{equation}
\label{eqn:oldadj}
c_1(N,\tau) = c_\tau(u) - \chi(\Sigma)
\end{equation}
where $\Sigma$ denotes the domain of $u$.
By \eqref{eqn:windCZ} and \eqref{eqn:windtaudef}, since $\op{CZ}_\tau(\gamma)$ is even if and only if $\gamma$ is positive hyperbolic, we have
\begin{equation}
\label{eqn:2wind}
2\op{wind}_\tau(\psi) \le  \CZ_\tau^{\op{ind}}(u) - p(u) + h_+(u),
\end{equation}
where $p(u)$ denotes the total number of ends of $u$.

Combining \eqref{eqn:psi0}, \eqref{eqn:oldadj}, and \eqref{eqn:2wind}, we obtain
\begin{equation} \label{eqn:zeronchn}
\begin{split}
2\#\psi^{-1}(0) &\le 2c_\tau(u) - 2\chi(\Sigma) + \CZ_\tau^{\op{ind}}(u) - p(u) + h_+(u)\\
&= \op{ind}(u) - \chi(\Sigma) - p(u) + h_+(u)\\
&= 2c_N(u).
\end{split}
\end{equation}

Since $c_N(u)=0$ and all zeroes of $\psi$ have positive multiplicity, we have that $\#\psi^{-1}(0)=0$, establishing (a). To deduce (b), note that the inequality \eqref{eqn:zeronchn} is in fact an equality. This implies that equality also holds in \eqref{eqn:2wind} and hence \eqref{eqn:windCZ}.
\end{proof}

\begin{lemma}
\label{lem:embedding}
Under the hypotheses in Proposition~\ref{prop:foliation}, if $C'\in\M^J_C$, then:
\begin{description}
\item{(a)}
The projection of $C'$ to $Y$ is an embedding.
\item{(b)}
If $C'$ is not obtained from $C$ by translation of the $\R$ factor in $\R\times Y$, then the projections of $C$ and $C'$ to $Y$ are disjoint.
\end{description}
\end{lemma}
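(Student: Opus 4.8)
The plan is to derive both parts of the lemma from the single claim that \emph{any two distinct curves in $\M^J_C$ are disjoint as subsets of $\R\times Y$}; here two different $\R$-translates of the same curve count as distinct, and for $a\in\R$ I write $C'+a=\{(s+a,y):(s,y)\in C'\}$ for the translate of a curve $C'$. Before addressing this claim, I record that each $C'\in\M^J_C$ projects to an \emph{immersion} in $Y$: the infinitesimal $\R$-translation projects to a section $\psi_0\in\Ker(D_{C'})$ of the normal bundle, which vanishes exactly where $\partial_s$ is tangent to $C'$. Since $C'$ lies in the same component as $C$ it is nontrivial and irreducible, so $\psi_0$ is not identically zero; and since the normal Chern number is locally constant on the connected set $\M^J_C$ we have $c_N(C')=c_N(C)=0$. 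Thus Lemma~\ref{lem:normalcn}(a) shows $\psi_0$ is nowhere vanishing, so $\partial_s$ is nowhere tangent to $C'$ and $\pi_Y|_{C'}$ is an immersion.

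Granting the disjointness claim, parts (a) and (b) follow directly. For (a), suppose $(s_1,y),(s_2,y)\in C'$ with $s_1\neq s_2$; then $C'$ and $C'+(s_2-s_1)$ both contain $(s_2,y)$, and these are distinct curves in $\M^J_C$ (distinct since $C'$ is not $\R$-invariant), contradicting disjointness. Hence $\pi_Y|_{C'}$ is injective, and together with the immersion property it is an embedding. For (b), if $\pi_Y(C)$ and $\pi_Y(C')$ were not disjoint, choose $(s_1,y)\in C$ and $(s_2,y)\in C'$; then $C$ and $C'+(s_1-s_2)$ both contain $(s_1,y)$, and $C'+(s_1-s_2)\neq C$ precisely because $C'$ is not an $\R$-translate of $C$, again contradicting disjointness.

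To prove the disjointness claim I would combine positivity of intersections with the homotopy invariance of the total intersection number of asymptotically cylindrical curves. To an ordered pair of curves $C_1,C_2\in\M^J_C$ one associates an algebraic intersection number that counts interior intersection points (each contributing positively, for $J$-holomorphic curves) plus a nonnegative asymptotic term recording the linking of $C_1$ and $C_2$ at the ends where they approach a common Reeb orbit. This total is invariant under deformation of each curve within $\M^J_C$ with fixed asymptotics, so it takes the same value for every pair; by taking the two curves to coincide, this common value equals the generalized self-intersection number of $C$. If that value is $0$, then, both contributions being nonnegative, the interior count and the asymptotic term vanish for every pair, so distinct curves in $\M^J_C$ are disjoint, which is the claim.

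It remains to show that this self-intersection number is $0$, which I expect to be the main obstacle. Here I would invoke the relative adjunction formula and the writhe bounds of ECH (see \cite[\S5]{bn} and \cite{pfh2}, building on the asymptotic winding estimates of \cite{hwz-emb}): the formula expresses the self-intersection of $C$ in terms of $c_\tau(C)$, $Q_\tau(C)$, $2\delta(C)$, and the writhes of the asymptotic braids at the ends. The embeddedness hypothesis (i) gives $\delta(C)=0$, and the hypothesis $c_N(C)=0$ together with the winding equalities from Lemma~\ref{lem:normalcn}(b) pin down each writhe. Hypothesis (iv) is exactly what forces, at an $m$-fold covered end, the winding number to be prime to $m$, so that the asymptotic braid is connected and contributes no extra self-linking; and hypothesis (iii) guarantees that $C$ has at most one positive and one negative end at each simple orbit, removing any cross-terms between distinct ends at a common orbit. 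With all the asymptotic defect terms thereby shown to vanish, the self-intersection number of $C$ evaluates to $0$. The delicate part throughout is the precise accounting of these asymptotic linking and writhe contributions, and the verification that hypotheses (iii) and (iv) annihilate all of them.
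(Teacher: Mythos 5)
Your argument reaches the right conclusion with the same basic ingredients as the paper (the projection of $\partial_s$ to the normal bundle, Lemma~\ref{lem:normalcn}, positivity of intersections, and a deformation-invariant intersection pairing), but it differs in the one essential step: how one shows that the invariant intersection number of two distinct curves in $\M^J_C$ vanishes. The paper writes $\#(C\cap C')=Q_\tau(C)+\ell_\tau(C,C')$, uses Lemma~\ref{lem:normalcn}(b) together with hypothesis (iv) to force equality in the upper bound on the asymptotic linking number $\ell_\tau$, concludes that $\#(C\cap C')$ is independent of $C'\in\M^J_C$, and then evaluates this constant to be zero \emph{geometrically}: hypothesis (iii) and the eigenfunction description of the ends show that $C$ is genuinely disjoint from a sufficiently small $\R$-translate of itself. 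You instead evaluate the constant \emph{algebraically} as the generalized self-intersection number, via the relative adjunction formula with $\delta(C)=0$ and the writhes of the asymptotic braids. This does close up: by (iii) and (iv) each end is a single connected torus braid, so an $m$-fold end with winding $w$ has writhe $w(m-1)$, and substituting into the adjunction formula gives
\[
Q_\tau(C)+\sum_i m_i\floor{m_i\theta_{i,\tau}^+}-\sum_j n_j\ceil{n_j\theta_{j,\tau}^-}
= c_\tau(C)-\chi(C)+\op{wind}_\tau(\psi)=\#\psi^{-1}(0)=0
\]
by the computation in the proof of Lemma~\ref{lem:normalcn}. So your route is valid; it trades the paper's explicit small-translation disjointness argument for the adjunction/writhe bookkeeping, which is more systematic but imports more machinery (Siefring-type homotopy invariance and nonnegativity of the asymptotic contribution), whereas the paper only needs the linking-number bound and one concrete disjoint pair. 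One small point to repair: in deducing (a) you assert that $C'+(s_2-s_1)\neq C'$ ``since $C'$ is not $\R$-invariant,'' but invariance under a single nonzero translation is a priori weaker than $\R$-invariance; as in the paper, one should exclude it by noting that invariance under translation by $a\neq 0$ would put $(na+s^{\ast},y^{\ast})\in C'$ for all $n$ and some $y^{\ast}$ off the Reeb orbits, contradicting the asymptotic convergence of the ends.
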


\begin{proof}
We proceed in four steps. We continue to use the notation from \eqref{eqn:windCZ} and \eqref{eqn:windtheta}.

{\em Step 1.\/} We first show that the projection of $C'$ to $Y$ is an immersion.

For any $C'\in\M^J_C$, the projection of $\partial_s$ (the derivative of the $\R$ coordinate on $\R\times Y$) to the normal bundle $N$ of $C'$ is a nonzero element of $\Ker(D_{C'})$, since it is the derivative of the family of curves obtained by translating $C'$ in the $\R$ direction. Since we are assuming that $C'$ is not a trivial cylinder, the projection of $\partial_s$ to $N$ does not vanish identically. We have that $c_N(C')=0$ and by hypothesis (i), we have that $C'$ is embedded. Thus, we may apply Lemma~\ref{lem:normalcn}(a) to conclude that the projection of $\partial_s$ to $N$ is nonvanishing. This means that the projection of $C'$ to $Y$ is an immersion. 

{\em Step 2.\/} We next show that if $C'\in\M^J_C$ and $C\neq C'$, then the algebraic count of intersections of $C$ and $C'$ in $\R\times Y$ does not depend on $C'$.

It follows from the definition of $Q_\tau$ in \cite[\S2.7]{ir} that the algebraic count of intersections of $C$ and $C'$ is given by
\[
\#(C\cap C') = Q_\tau(C) + \ell_\tau(C,C'),
\]
where $\ell_\tau(C,C')$ is the ``asymptotic linking number'' of $C$ and $C'$ with respect to $\tau$, defined in \cite[\S2.7]{ir}.

To analyze this asymptotic linking number, let $\op{wind}_{i,\tau}^+(C)$ denote the winding number of $\partial_s$ around the positive end at $\alpha_i^{m_i}$ with respect to $\tau$. Define $\op{wind}_{j,\tau}^-(C)$ likewise for the negative ends. 
As in \cite[Lem.\ 5.5(b)]{bn}, we then have have
\begin{equation}
\label{eqn:linkingbound}
\begin{split}
\ell_\tau(C,C') \le &\sum_{i}m_i\cdot \min(\op{wind}_{i,\tau}^+(C),\op{wind}_{i,\tau}^+(C'))\\
& - \sum_{j}n_j \cdot\max(\op{wind}_{j,\tau}^-(C),\op{wind}_{j,\tau}^-(C')).
\end{split}
\end{equation}
Moreover, equality holds if:
\begin{description}
\item{(*)}
For each $i$, the integers $m_i$ and $\min(\op{wind}_{i,\tau}^+(C),\op{wind}_{i,\tau}^+(C'))$ are relatively prime; and for each $j$, the numbers $n_j$ and $\max(\op{wind}_{j,\tau}^-(C),\op{wind}_{j,\tau}^-(C'))$ are relatively prime.
\end{description}

By Lemma~\ref{lem:normalcn}(b), we have
\begin{equation}
\label{eqn:sharpwind}
\begin{split}
w_{i,\tau}^+(C) &= \floor{m_i\theta_{i,\tau}^+},\\
w_{j,\tau}^-(C) &= \ceil{n_j\theta_{j,\tau}^-}.
\end{split}
\end{equation}
The same holds for any $C'\in\M^J_C$. In particular, by the hypothesis (iv), condition (*) above holds, so equality holds in \eqref{eqn:linkingbound}. Putting all of the above together, we obtain
\[
\#(C\cap C') = Q_\tau(C) + \sum_{i}m_i\floor{m_i\theta_{i,\tau}^+} - \sum_{j} n_j\ceil{n_j\theta_{j,\tau}^-}.
\]
This equation implies that $\#(C\cap C')$ does not depend on the choice of $C'\in\M^J_C$.

{\em Step 3.\/} We now show that if $C'\in\M^J_C$ and $C\neq C'$, then $C$ and $C'$ are disjoint in $\R\times Y$.

As in \cite[Prop.\ 3.4, Step 5]{wh}, hypothesis (iii) and condition (*) above imply that if $C'$ is obtained from $C$ by translating a small amount in the $\R$ direction, then $C$ and $C'$ are disjoint. It then follows from Step 2 that $\#(C\cap C')=0$ for all $C'\in\M^J_C$. By intersection positivity, this means that $C$ and $C'$ are disjoint.

{\em Step 4.\/} We now complete the proof.

(a) We know by Step 1 that the projection of $C$ to $Y$ is an immersion. To show that this projection is an embedding, we just need to show that it is injective. (If this map is an injective immersion, then it is also an embedding because the ends of $C$ are asymptotic to Reeb orbits.) If injectivity fails, then there exist $y\in Y$ and distinct $s_1,s_2\in\R$ such that $(s_1,y),(s_2,y)\in C$. Then $C$ intersects the translation of $C$ by $s_2-s_1$. It follows from Step 3 that $C$ equals the translation of $C$ by $s_2-s_1\neq 0$. This leads to all sorts of contradictions. For example, let $(s^{\ast},y^{\ast}) \in C$ be a point such that $y^{\ast}$ does not lie on a Reeb orbit. For large $n$, we would then have that $(n(s_2-s_1)+s^{\ast},y^{\ast}) \in C$, contradicting asymptotic convergence to Reeb orbits. We conclude that the projection of $C$ to $Y$ is an embedding, and the same argument works for any $C'\in\M^J_C$.

(b) If the projections of $C$ and $C'$ to $Y$ are not disjoint, then there exist $y\in Y$ and $s,s'\in\R$ with $(s,y)\in C$ and $(s',y)\in C'$. Thus $C$ intersects the translation of $C'$ by $s-s'$. It follows from Step 3 that $C$ equals the translation of $C'$ by $s-s'$.
\end{proof}

\begin{proof}[Proof of Proposition~\ref{prop:foliation}.]
 (a) We have proved this in Lemma~\ref{lem:embedding}(a).

(b) If $u$ is any immersed $J$-holomorphic curve in $\R\times Y$ with $c_N(u)<\op{ind}(u)$, then $u$ is automatically cut out transversely; see the review in \cite[Lem.\ 4.1]{dc}, and see \cite{wendl-at} for more general automatic transversality results.  In the present case, $c_N(C)=g(C)=h_+(C)=0$, so $\op{ind}(C)=2$, and the above automatic transversality criterion holds for $C$, as well as for every $C'\in\M^J_C$. Thus $\M^J_C/\R$ is a $1$-manifold.

By Lemma~\ref{lem:embedding}(b), the projections to $Y$ of different elements of $\M^J_C/\R$ are disjoint. To complete the proof that these projections give a foliation of an open subset of $Y$, let $C'\in\M^J_C$, let $(s,y)\in C'$, and let $\pi_Y(C')$ denote the projection of $C'$ to $Y$. We need to show that the natural map
\begin{equation}
\label{eqn:foliationchart}
T_{[C']}(\M^J_C/\R) \longrightarrow (N\pi_Y(C'))_y
\end{equation}
is an isomorphism. Here $N\pi_Y(C')$ denotes the normal bundle to $\pi_Y(C')$ in $Y$. By Lemma~\ref{lem:normalcn}(a), the map
\[
T_{C'}\M^J_C \longrightarrow (NC')_{(s,y)}
\]
is injective, hence an isomorphism. It follows from this that the map \eqref{eqn:foliationchart} is an isomorphism.
\end{proof}

\subsection{From a foliation to a global surface of section}

\begin{proof}[Proof of Proposition~\ref{prop:gss}.]
Note that hypotheses (i), (iii), and (iv) above are the same as the corresponding hypotheses in Proposition~\ref{prop:foliation}, and hypothesis (ii) above implies hypothesis (ii) in Proposition~\ref{prop:foliation}. Then by Lemma~\ref{lem:embedding}(a), the restriction of $\pi_Y$ to $C$, or more generally to any $C'\in\M^J_C$, is an embedding. To complete the proof that $\pi_Y(C)$ is a global surface of section, it is enough to show the following:
\begin{description}
\item{(a)}
For each $C'\in\M^J_C$, the projection $\pi_Y(C')$ is transverse to the Reeb vector field $R$.
\item{(b)}
Let $Z\subset Y$ denote the union of the images of the Reeb orbits at which $C$ has ends. Then for each $y\in Y\setminus Z$, the Reeb trajectory starting at $y$ intersects $\pi_Y(C)$ in both forward and backward time.
\end{description}

We proceed in three steps.

{\em Step 1.\/} We first prove statement (a).

Let $C'\in\M^J_C$. We know from Lemma~\ref{lem:embedding}(a) that $\partial_s$ is nowhere tangent to $C'$. Since $C'$ is $J$-holomorphic and $J\partial_s=R$, it follows that $C'$ is everywhere transverse to the plane spanned by $\partial_s$ and $R$. This implies that $\pi_Y(C')$ is everywhere transverse to $R$.

{\em Step 2.\/}
Let
\[
U = 
\coprod_{C'\in\M^J_C}\pi_Y(C').
\]
We now show that $U=Y\setminus Z$.

We first show that $U\subset Y\setminus Z$. Suppose to get a contradiction that there exist $C'\in\M^J_C$ and $z\in Z$ such that $z\in\pi_Y(C')$. Then by part (a), $\pi_Y(C')$ contains a disk $D$ which intersects $Z$ transversely at $z$. Now $C'$ has an end asymptotic to a Reeb orbit containing $z$, and $\pi_Y$ of points on this end must intersect the disk $D$. Thus $\pi_Y(C')$ is not embedded in $Y$, contradicting Lemma~\ref{lem:embedding}(a).

To prove the reverse inclusion $Y\setminus Z\subset U$, first note that $U$ is an open subset of $Y$, by Proposition~\ref{prop:foliation}(b). Since $Y\setminus Z$ is connected, it is enough to show that any sequence in $U$ has a subsequence that converges to a point in $U$ or a point in $Z$. This holds by our assumption (vi) that $\M^J_C/\R$ is compact.

{\em Step 3.\/} We now prove statement (b).

By assumption (vi) again, we can choose a diffeomorphism 
\[
\phi:\M^J_C/\R\stackrel{\simeq}{\longrightarrow} S^1=\R/\Z.
\]
By Lemma~\ref{lem:embedding}(b) and Step 2,
this induces a function $f:Y\setminus Z\to S^1$ such that $f(y)=\phi([C'])$ when $y\in\pi_Y(C')$. By Proposition~\ref{prop:foliation}(b), the function $f$ is a smooth submersion. By part (a), the derivative $Rf\neq 0$ on all of $Y\setminus Z$. By composing $\phi$ with an orientation-reserving diffeomorphism of $S^1$ if necessary, we may assume that $Rf>0$ on all of $Y\setminus Z$.

Given $y\in Y\setminus Z$ and $T\in\R$, define $g(y,T)\in\R$ to be the total change in $f$ along a Reeb trajectory for time $T$ starting at $y$. It is enough to show that for each $y\in Y\setminus Z$, there exists $T>0$ such that $g(y,T)\ge 1$ and $g(y,-T)\le -1$. In fact, we will show that there is a single $T>0$ which works for all $y\in Y\setminus Z$.

Suppose that $C$ has a positive end at the $m$-fold cover of a simple Reeb orbit $\gamma$. Fix a trivialization $\tau$ of $\xi|_\gamma$, and let $\theta\in\R$ denote the the rotation number of $\gamma$ with respect to $\tau$. Choose an identification of a tubular neighborhood of the image of $\gamma$ in $Y$ with $S^1\times D^2$, such that $\gamma$ is identified with $S^1\times\{0\}$ preserving orientation, and the derivative of the neighborhood identification along $\gamma$ agrees with $\tau$. Let $\rho:\R/m\Z\to \R/\Z=S^1$ denote the projection. By the asymptotics of holomorphic curves reviewed in \cite[\S5.1]{bn}, this end of $C$ is described by a map
\[
\begin{split}
[s_0,\infty)\times (\R/m\Z) & \longrightarrow \R\times (\R/\Z)\times D^2,\\
(s,t) &\longmapsto (s,\rho(t),\eta(s,t)).
\end{split}
\]
Here
\[
\eta(s,t) = e^{-\mu s}\varphi(t) + O\left(e^{(-\mu-\epsilon)s}\right),
\]
where $\varphi:\R/m\Z\to D^2$ is nonvanishing and has winding number less than or equal to $\floor{m\theta}$, and $\mu,\epsilon>0$. More specifically, $\varphi$ is an eigenfunction of the ``asymptotic operator'' associated to $\gamma^m$ with eigenvalue $\mu$; see the review in \cite[Lem.\ 5.2]{bn}.  This means that the Reeb flow near $\gamma$, as it goes $m$ times around $\gamma$, rotates approximately by at least $m\theta-\floor{m\theta}=\{m\theta\}$ relative to the eigenfunction $\varphi$ describing the asymptotics of $C$.

It follows that if $k$ is an integer with $k\{m\theta\}>1$, then for $y$ near the image of $\gamma$, if we set $T=km\mc{A}(\gamma)$, then we have $g(y,T) > 1$ and $g(y,-T) < -1$.  Moreover, since we assumed in (v) that $h_+(C)=0$, we know that $m\theta$ is not an integer, so such a $k$ exists.

Reasoning similarly for the other ends of $C$, we conclude that there exist a neighborhood $V$ of $Z$ and a real number $T_0>0$ such that if $y\in V\setminus Z$, then $g(y,T_0) > 1$ and $g(y,-T_0) < -1$.

By compactness\footnote{One might wish to simplify the proof by finding a positive lower bound on $Rf$ on all of $Y\setminus Z$. However this fails in the generic situation where the first two positive eigenvalues of the asymptotic operator of any Reeb orbit at which $C$ has a positive end are distinct, or the first two negative eigenvalues of the asymptotic operator of any Reeb orbit at which $C$ has a negative end are distinct.}, there exists $\delta>0$ such that the derivative $Rf>\delta$ on $Y\setminus V$. It follows that if we set $T=\delta^{-1} + T_0$, then for every $y\in Y\setminus Z$ we have $g(y,T)>1$ and $g(y,-T) < -1$. To clarify for example why $g(y,T)>1$: if the Reeb flow starting at $y$ stays outside of $V$ for time at least $\delta^{-1}$, then by the definition of $\delta$ we already have $g(y,\delta^{-1})>1$. On the other hand, if for some $\delta'\in[0,\delta^{-1}]$ the image of $y$ under the time $\delta'$ Reeb flow is in $V$, then by the definition of $T_0$ we have $g(y,\delta'+T_0)>1$.
\end{proof}

\subsection{The Poincar\'{e} return map}
\label{sec:prm}

Under the hypotheses of Proposition~\ref{prop:gss}, we can now define the ``Poincar\'e return map''
\begin{equation}
\label{eqn:poincare}
f:\pi_Y(C)\longrightarrow \pi_Y(C)
\end{equation}
as follows: If $y\in Y$, then $f(y)$ is the first intersection with $\pi_Y(C)$ of the forward orbit of $y$ under the Reeb flow. More precisely, for $t\in\R$, let $\phi^t:Y\to Y$ denote the time $t$ Reeb flow. If $y\in\pi_Y(C)$, let $t_+(y)$ denote the infimum over $t>0$ such that $\phi^t(y)\in\pi_Y(C)$. By the ``forward'' part of the third bullet in Definition~\ref{def:gss}, we have $t_+(y)<\infty$. Now define $f(y)=\phi^{t_+(y)}(y)$.

\begin{lemma}
\label{lem:poincare}
Under the hypotheses of Proposition~\ref{prop:gss}:
\begin{description}
\item{(a)}
$d\lambda$ restricts to an area form on $\pi_Y(C)$.
\item{(b)}
The Poincar\'{e} return map \eqref{eqn:poincare} is a diffeomorphism which preserves this area form.
\item{(c)}
The Poincar\'{e} return map preserves the ends of $\pi_Y(C)$.
\end{description}
\end{lemma}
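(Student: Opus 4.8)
The plan is to handle the three parts in order, getting (a) and (b) from soft properties of the Reeb flow and reserving the substantive work for (c).

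For part (a): since $\pi_Y(C)$ is a global surface of section, $R$ is transverse to $\Sigma:=\pi_Y(C)$, so at each $y\in\Sigma$ we have $T_yY=\R R(y)\oplus T_y\Sigma$. Because $\lambda\wedge d\lambda$ is a volume form and $d\lambda(R,\cdot)=0$, the $2$-form $d\lambda_y$ has exactly one-dimensional kernel, namely $\R R(y)$. Hence $T_y\Sigma$, being a complement to $R$, meets $\ker d\lambda_y$ trivially, so $d\lambda|_{T_y\Sigma}$ is nondegenerate, i.e. an area form. For part (b): first, $f$ is well defined and smooth. Finiteness of $t_+(y)$ is the forward half of the third bullet in Definition~\ref{def:gss}, and transversality of $R$ to $\Sigma$ lets the implicit function theorem give smoothness of $t_+$, hence of $f$. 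The inverse of $f$ is the first-return map of the backward Reeb flow, which is well defined by the backward half of the same bullet and smooth by the same argument, so $f$ is a diffeomorphism. For area preservation, $\mc{L}_R\lambda=d(\iota_R\lambda)+\iota_R d\lambda=0$, so each $\phi^t$ preserves $\lambda$ and hence $d\lambda$. Writing $f(y)=\phi^{t_+(y)}(y)$ and differentiating, for $v\in T_y\Sigma$ we get $df(v)=d\phi^{t_+(y)}(v)+(dt_+(v))\,R(f(y))$; since $R\in\ker d\lambda$, the extra term drops out of $d\lambda(df(v),df(w))$, leaving $d\lambda(d\phi^{t_+}(v),d\phi^{t_+}(w))=d\lambda(v,w)$. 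Thus $f^*(d\lambda|_\Sigma)=d\lambda|_\Sigma$.

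For part (c) the statement is local near each binding orbit, so I would fix a simple Reeb orbit $\gamma$ at which $C$ has an end and an end $E$ of $\Sigma$ asymptotic to $\gamma$, and work in the tubular-neighborhood coordinates $S^1\times D^2$ used in the proof of Proposition~\ref{prop:gss}, in which the Reeb flow rotates the $D^2$-factor with rotation number $\theta$ per longitudinal loop and $E$ is the embedded spiralling annulus $(s,t)\mapsto(\rho(t),\eta(s,t))$ with $\eta(s,t)=e^{-\mu s}\varphi(t)+O(e^{(-\mu-\epsilon)s})$, where $\varphi$ has winding number $\floor{m\theta}$ by Lemma~\ref{lem:normalcn}(b). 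The argument has two steps: first, show that for $y\in\Sigma$ sufficiently close to $\gamma$ the entire forward trajectory up to the first return stays inside the tubular neighborhood; second, identify the sheet of $\Sigma$ on which that return lands. For the first step, the model computation shows that the first-return time stays bounded as $y\to\gamma$, so for $y$ close enough to $\gamma$ the trajectory cannot escape a fixed small neighborhood in that bounded time; consequently $f(y)$ lies on an end of $\Sigma$ asymptotic to $\gamma$. For the second step, one compares angular rates: the trajectory's argument advances at rate $\theta$ per longitudinal loop while the argument along $E$ advances at the slower rate $\floor{m\theta}/m<\theta$, and tracking the continuously varying parameter $t$ through $y$ shows the first return occurs back on the connected annulus $E$.

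The hard part will be the case permitted by hypothesis (iii) in which $C$ has both a positive and a negative end at covers of the same simple orbit $\gamma$, so that two ends $E^+$ and $E^-$ of $\Sigma$ spiral into $\gamma$ at once; here one must rule out that a point far out on $E^+$ first returns to $E^-$. The key mechanism is that the two ends spiral with opposite defect relative to the flow: the positive end has angular rate $\floor{m_+\theta}/m_+<\theta$ while the negative end has rate $\ceil{m_-\theta}/m_->\theta$, so relative to the forward-rotating trajectory the arcs of $E^+$ recede ahead while those of $E^-$ fall behind, forcing the first forward crossing to be with $E^+$. Making this rigorous requires the sharp winding identities of Lemma~\ref{lem:normalcn}(b) together with the coprimality hypothesis (iv) of Proposition~\ref{prop:gss}, which guarantee that each end is a single embedded annulus (by Lemma~\ref{lem:embedding}) meeting every meridian disk in a standard arc configuration, so that the angular bookkeeping genuinely determines the sheet of first return; the negative hyperbolic case, where $\theta\equiv 1/2$, is handled by the analogous but simpler finite-winding picture.
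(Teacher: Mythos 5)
Your proofs of (a) and (b) are correct and essentially the same as the paper's (the paper simply cites \cite{hwz1} for area preservation where you compute it directly, which is fine), and your first step for (c) --- bounded first-return time, hence confinement of the trajectory to a small neighborhood of $\gamma$ until the first return --- is exactly the paper's argument. The problem is in what you call the hard part of (c).

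The angular-rate mechanism you propose for the case of a positive end $E^+$ and a negative end $E^-$ of $\pi_Y(C)$ spiralling into the same simple orbit $\gamma$ does not work. Relative to the Reeb trajectory, which rotates at rate $\theta$ per longitude, the arcs of $E^+$ rotate at the slower rate $\floor{m_+\theta}/m_+$ while the arcs of $E^-$ rotate at the faster rate $\ceil{m_-\theta}/m_-$; so the trajectory catches up to the arcs of $E^+$ from behind, but the arcs of $E^-$ catch up to the trajectory from behind. It therefore crosses arcs of both families, and which family it meets first is governed by the angular phase of the starting point relative to $E^-$, a quantity that varies through all values as the starting point ranges over $E^+$. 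Your rate comparison does not ``force the first forward crossing to be with $E^+$''; taken at face value it would even make the return map discontinuous. The correct resolution is that this configuration is vacuous. Under the hypotheses of Proposition~\ref{prop:gss}, Lemma~\ref{lem:embedding}(a) makes $\pi_Y(C)$ embedded in $Y$, while Lemma~\ref{lem:normalcn}(b) together with hypothesis (iv) forces $E^+$ and $E^-$ to meet the boundary torus of a small tubular neighborhood of $\gamma$ in single closed curves in the classes $(m_+,\floor{m_+\theta})$ and $(m_-,\ceil{m_-\theta})$ respectively; these have intersection number $m_+\ceil{m_-\theta}-m_-\floor{m_+\theta}>0$ because $\floor{m_+\theta}/m_+<\theta<\ceil{m_-\theta}/m_-$, so the two ends would have to intersect --- impossible for disjoint pieces of an embedded surface. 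Hence each orbit of $Z$ is approached by exactly one end of $\pi_Y(C)$, and your (and the paper's) confinement argument already finishes (c); no angular bookkeeping is needed. You should replace the last paragraph of your argument with this non-existence statement.
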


\begin{proof}
By Proposition~\ref{prop:gss}, $\pi_Y(C)$ is a global surface of section for the Reeb flow. By the first bullet in Definition~\ref{def:gss}, the Reeb vector field $R$ is transverse to $\pi_Y(C)$. It follows that (a) holds, and also that $f$ is smooth.

By the ``backward'' part of the third bullet in Definition~\ref{def:gss}, $f$ is a diffeomorphism. And as shown in \cite[Eq.\ (5.10)]{hwz1}, the return map $f$ preserves the area form $d\lambda|_{\pi_Y(C)}$. This proves (b).

To prove (c), observe that the proof of Proposition~\ref{prop:gss} showed that if $C$ has an end at a cover of a simple Reeb orbit $\gamma$, then the Reeb flow, starting a point in $\pi_Y(C)$ near $\gamma$, will return to $\pi_Y(C)$ while staying in a neighborhood of $\gamma$.
\end{proof}

\section{Existence of a special holomorphic curve}
\label{sec:curve}

We would now like to find a holomorphic curve satisfying the criteria in Proposition~\ref{prop:gss}, so that it projects to a global surface of section for the Reeb flow. The goal of this section is to prove Proposition~\ref{prop:special} below, which asserts that we can do this, under the assumptions that $c_1(\xi)$ is torsion and that there are only finitely many simple Reeb orbits. In fact, we will obtain a curve satisfying even more properties than those required for Proposition~\ref{prop:gss}, namely: 

\begin{definition}
\label{def:special}
Let $(Y,\lambda)$ be a nondegenerate contact three-manifold, and let $J$ be a $\lambda$-compatible almost complex structure on $\R\times Y$.
A $J$-holomorphic curve $C$ in $\R\times Y$ is {\bf special\/} if it has the following properties:
\begin{description}
\item{(a)}
$\op{ind}(C)=I(C)=2$, and $C$ is irreducible and embedded in $\R\times Y$.
\item{(b)}
$C$ has at least one positive end, and at least one negative end, at elliptic Reeb orbits.
\item{(c)}
$C$ has genus zero and at most 3 ends.
\item{(d)}
$C$ does not have two positive ends, or two negative ends, at covers of the same simple Reeb orbit.
\item{(e)}
$C$ does not have any ends at hyperbolic orbits, except possibly one end at a simple negative hyperbolic orbit.
\item{(f)}
The component of the moduli space of $J$-holomorphic curves containing $C$ is compact.
\end{description}
\end{definition}

\begin{proposition}
\label{prop:special}
Let $(Y,\lambda)$ be a nondegenerate contact three-manifold with $c_1(\xi)\in H^2(Y;\Z)$ torsion and with only finitely many simple Reeb orbits. Let $J$ be a generic $\lambda$-compatible almost structure on $\R\times Y$. Then there exists a special $J$-holomorphic curve in $\R\times Y$.
\end{proposition}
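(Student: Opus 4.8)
The plan is to produce the special curve as the nontrivial component of a $U$-curve supplied by the $U$-sequence of Proposition~\ref{prop:Useq}, using the $J_0$ index and the volume property to control its genus, its ends, and its breakings. Since $c_1(\xi)$ is torsion I take $\Gamma=0$, so that $c_1(\xi)+2\op{PD}(0)$ is torsion and $ECH_*(Y,\lambda,0)$ carries a relative $\Z$-grading, and Proposition~\ref{prop:Useq}(a) gives a $U$-sequence $\{\sigma_k\}$. For each $k$ I extract from the chain-level $U$ map a $U$-curve $\mc{C}_k=\mc{C}_0^{(k)}\sqcup C_1^{(k)}$ from an ECH generator $\alpha^{(k+1)}$ to an ECH generator $\alpha^{(k)}$, with $\mc{A}(\alpha^{(k)})$ essentially equal to $c_{\sigma_k}$; by Proposition~\ref{prop:lowI}(2) and the remark after it, each $C_1^{(k)}$ is irreducible, embedded and satisfies $\op{ind}(C_1^{(k)})=I(C_1^{(k)})=2$, which is exactly property~(a). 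The special curve will be one of the $C_1^{(k)}$.

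The heart of the argument is genus control. Because $c_1(\xi)$ is torsion and $\Gamma=0$, the ambiguity formula \eqref{eqn:J0ambiguity} shows that $J_0(\mc{C}_k)$ depends only on the asymptotic orbit sets $\alpha^{(k+1)},\alpha^{(k)}$. Subtracting \eqref{eqn:defJ0} from \eqref{eqn:defI} gives $I-J_0=2c_\tau+\sum_i\CZ_\tau\bigl((\alpha^{(k+1)})_i^{m_i}\bigr)-\sum_j\CZ_\tau\bigl((\alpha^{(k)})_j^{n_j}\bigr)$, and since $I(\mc{C}_k)=2$, summing over $k\le N$ (choosing the generators $\alpha^{(k)}$ coherently so the Conley-Zehnder terms telescope, and using additivity of $c_\tau$) evaluates $\sum_k J_0(\mc{C}_k)$ up to an error controlled by the top-cover Conley-Zehnder indices and by $c_\tau$ of the total class, both of which are linear in the multiplicities and hence $O(c_{\sigma_N})$ by finiteness of the simple orbits. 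On the other hand, Proposition~\ref{prop:obscure} writes $J_0(\mc{C}_k)=2g(C_1^{(k)})+E_k$, where the genus-zero baseline $E_k$ is determined by the asymptotics via the partition conditions. I would then show, again using finiteness of the simple orbits and the partition structure, that $\sum_k E_k$ matches $\sum_k J_0(\mc{C}_k)$ up to $O(c_{\sigma_N})$: the deviations come only from ends that carry the full multiplicity of an orbit, whose number is bounded by the total change in multiplicities, which is again $O(c_{\sigma_N})$. Hence $\sum_k g(C_1^{(k)})=O(c_{\sigma_N})$, and the volume asymptotics \eqref{eqn:Useqasymptotics} give $c_{\sigma_N}\sim\sqrt{2\op{vol}(Y,\lambda)\,N}=o(N)$, so all but $o(N)$ of the curves $C_1^{(k)}$ have genus zero. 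The same bookkeeping, using that the finitely many hyperbolic orbits each occur with multiplicity one and the partition formulas \eqref{eqn:partitionh+}--\eqref{eqn:partitionh-}, bounds $h_+(C_1^{(k)})$ and the number of ends, giving for most $k$ properties~(b), (c), and (e).

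It remains to secure properties~(d) and~(f) for a single index. Since $\sum_{k<N}\bigl(c_{\sigma_{k+1}}-c_{\sigma_k}\bigr)=c_{\sigma_N}-c_{\sigma_1}\sim\sqrt{2\op{vol}(Y,\lambda)\,N}$ is a sum of $N$ positive terms, their average tends to $0$, so the action differences $\mc{A}(\alpha^{(k+1)})-\mc{A}(\alpha^{(k)})$ are small for infinitely many $k$; this is the content I would isolate as Lemma~\ref{lem:Usequence}. Restricting to the infinitely many $k$ for which $C_1^{(k)}$ has genus zero with the good asymptotics above and small action difference, I run a combinatorial argument: the partition conditions (Proposition~\ref{prop:partitions}) determine the covers at which $C_1^{(k)}$ has ends from the multiplicities and rotation numbers, and Lemma~\ref{lem:relprime} turns the triviality of a partition into the relative-primality in property~(d). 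Using the finiteness of the simple orbits and the smallness of the action differences, I would select an index $k$ for which every relevant partition is a single block -- giving at most one end per simple orbit on each side, hence~(c) and~(d) -- and for which there is no admissible intermediate orbit set $\delta$ with $\mc{A}(\alpha^{(k+1)})>\mc{A}(\delta)>\mc{A}(\alpha^{(k)})$ carrying index-one curves on both sides. By Proposition~\ref{prop:lowI}(1) and SFT compactness the latter rules out any breaking of $C_1^{(k)}$, so $\M^J_{C_1^{(k)}}/\R$ is compact, which is property~(f). Taking $C=C_1^{(k)}$ gives a special curve.

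I expect the genus control to be the main obstacle. A single $U$-curve may a priori have large genus, and $\sum_k J_0(\mc{C}_k)$ grows linearly in $N$, so a sublinear bound on the total genus can only come from matching this sum precisely against the partition-determined baseline $\sum_k E_k$; making this matching rigorous is exactly where the torsion hypothesis (which makes $J_0$ depend only on the asymptotics), the finiteness of the simple orbits, and the sharp volume asymptotics \eqref{eqn:volumeformula} must be used together. The secondary difficulty is combinatorial: selecting an index with no intermediate breaking orbit set while preserving all the asymptotic constraints, which is delicate because the achievable actions of orbit sets need not form a discrete set.
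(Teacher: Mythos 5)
Your overall architecture matches the paper's (a $U$-sequence from Proposition~\ref{prop:Useq}, the volume asymptotics \eqref{eqn:Useqasymptotics} to make action differences small, $J_0$ bookkeeping via \eqref{eqn:J0ambiguity} and Proposition~\ref{prop:obscure}, partition conditions plus Lemma~\ref{lem:relprime} for property (d), and a breaking analysis for (f)), but the two steps you yourself flag as the main obstacles each contain a genuine gap. First, the genus control. Writing $J_0(\mc{C}_k)=2g_k-2+p_k+e_k$ with $p_k$ the number of ends of $C_1^{(k)}$ and $e_k=\sum_i(n_i^+-1)+\sum_j(n_j^--1)\ge 0$, the telescoping gives $\sum_k J_0(\mc{C}_k)=2N+O(\sqrt{N})$, i.e.\ $J_0=2$ per curve on average; but a curve with two ends, single-block partitions, genus one, and no simple orbit shared with its trivial cylinders satisfies $p_k=2$, $e_k=0$, $J_0=2$ exactly, so the sum is consistent with \emph{every} curve having genus one. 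Your proposed repair --- that the deviations of $\sum_k E_k$ from $\sum_k J_0$ are bounded by ``the total change in multiplicities, which is $O(c_{\sigma_N})$'' --- is false: each $U$-curve must destroy a nonempty orbit set and create another, each of action at least $\min_i\mc{A}(\gamma_i)$, so the total variation of the multiplicity vectors along the chain is $\Theta(N)$ even though the net action change is $O(\sqrt{N})$. This is exactly why the paper cannot conclude genus zero from the $J_0$ sum alone: it first pins down $J_0(\mc{C}(i))=2$ for long consecutive runs (Lemma~\ref{lem:Usequence}), and then runs the Type~I/Type~II pigeonhole argument over $2n+1$ consecutive curves, using Lemma~\ref{lem:exceptional} to show that a curve with one negative (resp.\ positive) end at a non-exceptional pair ``uses up'' a simple orbit for all earlier (resp.\ later) curves; the resulting contradiction produces a curve whose trivial and nontrivial parts share an orbit, and only then does \eqref{eqn:obscure} force $e_k\ge 1$ and hence genus zero with at most three ends (Lemma~\ref{lem:J02}(a)).

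Second, the compactness criterion. You propose to choose $k$ so that no admissible orbit set has action strictly between $\mc{A}(\alpha^{(k)})$ and $\mc{A}(\alpha^{(k+1)})$. With finitely many simple orbits of actions $a_1,\dots,a_n$, the achievable actions $\sum_i m_ia_i$ become arbitrarily dense in windows of fixed width $\epsilon$ as the total action grows (whenever some ratio $a_i/a_j$ is irrational), so such an empty window need not exist for any large $k$; as you note, this is delicate, but the fix is not to look for empty windows. The paper's Lemma~\ref{lem:lowenergy}(c) instead analyzes the structure of a hypothetical breaking: a genus-zero, $\le 3$-end, index-2 curve can only break into two index-1 levels, at least one of which is a cylinder, and an index-1, ECH-index-1 cylinder must have one end at a positive hyperbolic orbit that is either simple or a double cover of a negative hyperbolic orbit. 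Since there are only finitely many such orbits, a single $\epsilon$ (chosen once, in conditions (i)--(iii) of that lemma) separates their actions from those of all other Reeb orbits, ruling out the breaking; the same $\epsilon$-separation of ``all-exceptional'' orbit sets is what gives property (b) (an elliptic end on each side), which your proposal does not address. So the statement is true and your skeleton is the right one, but both of the load-bearing steps need the paper's finer arguments rather than the counting shortcuts you propose.
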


\subsection{A sequence of U-curves}

The first step in the proof of Proposition~\ref{prop:special} is to obtain a sequence of $U$-curves with some control over their $J_0$ index. 

To prepare for this, note from \eqref{eqn:defI} and \eqref{eqn:defJ0} that if $\alpha$ and $\beta$ are any orbit sets with $[\alpha]=[\beta]$ and if $Z\in H_2(Y,\alpha,\beta)$, then the difference between $I$ and $J_0$ is given by
\begin{equation}
\label{eqn:IMinusJ}
I(\alpha,\beta,Z) - J_0(\alpha,\beta,Z) = 2 c_{\tau}(Z) + \sum_i \op{CZ}_{\tau}(\alpha_i^{m_i}) - \sum_j \op{CZ}_{\tau}(\beta_j^{n_j}).
\end{equation}
We will also need the following linearity property of the relative first Chern class: Let $\alpha'$ and $\beta'$ be another pair of orbit sets with $[\alpha']=[\beta']$, and let $Z'\in H_2(Y,\alpha',\beta')$. Then $Z+Z'\in H_2(Y,\alpha\alpha',\beta\beta')$ is defined; here $\alpha\alpha'$ denotes the ``product'' orbit set obtained by taking the union of the simple Reeb orbits in $\alpha$ and $\alpha'$ and adding their multiplicities.  Let $\tau$ be a trivialization of $\xi$ over all the Reeb orbits in the four orbit sets $\alpha,\beta,\alpha',\beta'$; it then follows from the definition of $c_\tau$ in \cite[\S3.2]{bn} that
\begin{equation}
\label{eqn:linearity}
c_\tau(Z+Z') = c_\tau(Z) + c_\tau(Z').
\end{equation}

\begin{lemma}
\label{lem:IJ0}
Let $(Y,\lambda)$ be a nondegenerate contact 3-manifold with $c_1(\xi)\in H^2(Y;\Z)$ torsion. Then:
\begin{description}
\item{(a)} There is a unique way to assign, to each orbit set $\alpha$ with $[\alpha]=0\in H_1(Y)$, integers $I(\alpha)$ and $J_0(\alpha)$, such that (i) $I(\emptyset)=J_0(\emptyset)=0$, and (ii) if $\beta$ is another orbit set with $[\beta]=0$, then for any $Z\in H_2(Y,\alpha,\beta)$, we have
\begin{equation}
\label{eqn:IJ0abs}
\begin{split}
I(\alpha,\beta,Z) & = I(\alpha) - I(\beta),\\
J_0(\alpha,\beta,Z) & = J_0(\alpha) - J_0(\beta).
\end{split}
\end{equation}
\item{(b)} If there are only finitely many simple Reeb orbits, then there is a constant $\delta_1 > 0$ such that if $\alpha$ is any orbit set with $[\alpha]=0$, then
\begin{equation}
\label{eqn:IJ0diff}
|I(\alpha) - J_0(\alpha)| \le \delta_1 \mc{A}(\alpha).
\end{equation}
\end{description}
\end{lemma}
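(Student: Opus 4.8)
The plan is to handle (a) by exploiting the torsion hypothesis to remove all dependence on the relative homology class $Z$, and then to define the desired integers by pairing against the empty orbit set.

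First I would observe that since $c_1(\xi)$ is torsion and $\Gamma=0$, the pairing $\langle c_1(\xi),Z-Z'\rangle$ vanishes for any $Z,Z'\in H_2(Y,\alpha,\beta)$, because a torsion class in $H^2(Y;\Z)$ pairs trivially with any integral homology class. Hence by the ambiguity formulas \eqref{eqn:indexambiguity}, \eqref{eqn:J0ambiguity}, and \eqref{eqn:ctauambiguity}, each of $I(\alpha,\beta,Z)$, $J_0(\alpha,\beta,Z)$, and $c_\tau(Z)$ is independent of $Z$; I will write these as $I(\alpha,\beta)$, $J_0(\alpha,\beta)$, $c_\tau(\alpha,\beta)$. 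Since $[\alpha]=0$, the set $H_2(Y,\alpha,\emptyset)$ is nonempty, so I can define the integers $I(\alpha):=I(\alpha,\emptyset)$ and $J_0(\alpha):=J_0(\alpha,\emptyset)$. Property (i) holds because the defining sums \eqref{eqn:defI}, \eqref{eqn:defJ0} vanish on the empty class. For (ii), the $I$ relation is immediate from the additivity \eqref{eqn:Iadditive} applied to the triple $\alpha,\beta,\emptyset$. For the $J_0$ relation, I would show that $I-J_0$ is additive under concatenation: by \eqref{eqn:IMinusJ} the difference $(I-J_0)(\alpha,\beta)$ equals $2c_\tau(\alpha,\beta)+\sum_i\op{CZ}_\tau(\alpha_i^{m_i})-\sum_j\op{CZ}_\tau(\beta_j^{n_j})$, and when two such expressions are concatenated along $\beta$, the $\beta$-Conley--Zehnder terms cancel while the $c_\tau$ terms add (concatenation additivity of the relative first Chern class, which follows from its definition exactly as in \eqref{eqn:linearity}). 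Since $I$ and $I-J_0$ are both additive, so is $J_0$, yielding the $J_0$ relation; uniqueness follows by setting $\beta=\emptyset$ in (ii) and using (i).

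For (b), I would start from \eqref{eqn:IMinusJ} with $\beta=\emptyset$, which gives
\[
I(\alpha)-J_0(\alpha) = 2c_\tau(\alpha,\emptyset) + \sum_i\op{CZ}_\tau(\alpha_i^{m_i}),
\]
and bound the two terms separately by a multiple of $\mc{A}(\alpha)=\sum_i m_i\mc{A}(\alpha_i)$. Fix once and for all a trivialization $\tau$ over each of the finitely many simple orbits $\gamma_1,\dots,\gamma_n$, and set $a_{\min}=\min_a\mc{A}(\gamma_a)>0$ and $\theta_{\max}=\max_a|\theta_a|$, where $\theta_a$ is the rotation number of $\gamma_a$. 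The Conley--Zehnder term is elementary: by \eqref{eqn:CZequation} and the fact that the $m_i$-fold cover has rotation number $m_i\theta_i$, one has $|\op{CZ}_\tau(\alpha_i^{m_i})|\le 2m_i|\theta_i|+1$, and since both $\sum_i m_i$ and the number of distinct orbits in $\alpha$ are at most $a_{\min}^{-1}\mc{A}(\alpha)$, this sum is bounded by a constant times $\mc{A}(\alpha)$.

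The main obstacle is the relative first Chern class term, since $c_\tau(\alpha,\emptyset)$ is not an obvious function of the multiplicities. Here I would use additivity: by \eqref{eqn:linearity}, $c_\tau(\cdot,\emptyset)$ is additive under products of orbit sets, so it is enough to control it on a finite generating set. Consider the monoid $M$ of multiplicity vectors $\vec m\in\Z_{\ge 0}^n$ with $\sum_a m_a[\gamma_a]=0\in H_1(Y)$; this is the intersection of the rational polyhedral cone $\R_{\ge 0}^n$ with the lattice $\ker(\Z^n\to H_1(Y))$, hence finitely generated by Gordan's lemma (this works even when $H_1(Y)$ has torsion). Writing any such $\alpha$ as a product of the corresponding finitely many generators $\alpha^{(1)},\dots,\alpha^{(K)}$ with nonnegative exponents $p_k$, additivity of both $c_\tau(\cdot,\emptyset)$ and $\mc{A}$ gives $c_\tau(\alpha,\emptyset)=\sum_k p_k\,c_\tau(\alpha^{(k)},\emptyset)$ and $\mc{A}(\alpha)=\sum_k p_k\,\mc{A}(\alpha^{(k)})$. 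Since each generator is nonempty, $\mc{A}(\alpha^{(k)})>0$, so $|c_\tau(\alpha,\emptyset)|\le\bigl(\max_k|c_\tau(\alpha^{(k)},\emptyset)|/\min_k\mc{A}(\alpha^{(k)})\bigr)\mc{A}(\alpha)$. Combining this with the Conley--Zehnder bound produces the constant $\delta_1$ and completes the proof.
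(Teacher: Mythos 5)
Your proposal is correct and follows essentially the same route as the paper: part (a) via the vanishing of the index ambiguity when $c_1(\xi)$ is torsion and $\Gamma=0$ together with concatenation additivity, and part (b) by splitting $I-J_0$ into the relative Chern class and Conley--Zehnder terms via \eqref{eqn:IMinusJ} and exploiting additivity of $c_\tau$ over the monoid of nullhomologous multiplicity vectors. The only (harmless) difference is in the last step: the paper extends $c_\tau$ to a $\Q$-linear functional on the span of that monoid to get an exact formula $c_\tau(\alpha)=\sum_i w_im_i$, whereas you invoke Gordan's lemma to get a finite generating set and deduce only the needed linear bound.
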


\begin{proof}
(a)
We must define $I(\alpha) = I(\alpha,\emptyset,Z)$ and $J_0(\alpha) = J_0(\alpha,\emptyset,Z)$ where $Z$ is any class in $H_2(Y,\alpha,\emptyset)$. These definitions do not depend on the choice of $Z$ in view of \eqref{eqn:indexambiguity} and \eqref{eqn:J0ambiguity} since $[\alpha]=0$ and $c_1(\xi)$ is torsion.
The equations \eqref{eqn:IJ0abs} hold as a result of the additivity property \eqref{eqn:Iadditive} of $I$ and an analogous property of $J_0$.

(b) Let $\alpha_1,\ldots,\alpha_n$ denote the simple Reeb orbits. Fix a trivialization $\tau$ of $\xi$ over $\alpha_1,\ldots,\alpha_n$. Let $\alpha=\prod_i\alpha_i^{m_i}$ be a nullhomologous orbit set. Define $c_\tau(\alpha)=c_\tau(\alpha,\emptyset,Z)$ for any $Z\in H_2(Y,\alpha,\emptyset)$; this is well defined by \eqref{eqn:ctauambiguity}. Then by part (a) and \eqref{eqn:IMinusJ}, we have
\begin{equation}
\label{eqn:imj}
I(\alpha) - J_0(\alpha) = 2c_\tau(\alpha) + \sum_{i=1}^n\op{CZ}_\tau(\alpha_i^{m_i}).
\end{equation}
Here we interpret $\op{CZ}_\tau(\alpha_i^{m_i})=0$ when $m_i=0$. 

To analyze the $c_\tau$ term in \eqref{eqn:imj}, note that $\alpha=\prod_{i=1}^n\alpha_i^{m_i}$ is nullhomologous if and only if $(m_1,\ldots,m_n)$ is an element of the set
\[
W = \left\{(m_1,\ldots,m_n)\in \N^n \;\bigg|\; \sum_{i=1}^nm_i[\alpha_i]=0\in H_1(Y)\right\}.
\]
Thus $c_\tau$ defines a map $W\to \Z$. Let $W_\Q$ denote the span of $W$ in $\Q^n$. The map $c_\tau:W\to\Z$ is additive by \eqref{eqn:linearity}, hence $c_\tau$ extends uniquely to a linear map $W_\Q\to\Q$.
This extension is then given by the inner product with a fixed vector in $W_\Q$. We conclude that there are constants $w_1,\ldots,w_n\in\Q$ such that every nullhomologous orbit set $\alpha=\prod_{i=1}^n\alpha_i^{m_i}$ satisfies
\begin{equation}
\label{eqn:analyze1}
c_\tau(\alpha) = \sum_{i=1}^nw_im_i.
\end{equation}

To estimate the Conley-Zehnder index term in \eqref{eqn:imj}, note that by \eqref{eqn:CZequation} we have
\[
\op{CZ}_\tau\left(\alpha_i^{m_i}\right) = \floor{m_i\theta_i} + \ceil{m_i\theta_i}
\]
where $\theta_i$ denotes the rotation number of $\alpha_i$ with respect to $\tau$. In particular,
\begin{equation}
\label{eqn:analyze2}
\left|\op{CZ}_\tau\left(\alpha_i^{m_i}\right)\right| \le 2\ceil{|\theta_i|}m_i.
\end{equation}

Combining \eqref{eqn:imj}, \eqref{eqn:analyze1}, and \eqref{eqn:analyze2}, we obtain
\begin{equation}
\label{eqn:imja}
|I(\alpha) - J_0(\alpha)| \le \sum_{i=1}^n d_i m_i
\end{equation}
where $d_i=2(|w_i|+\ceil{|\theta_i|})$.
In addition, we have
\begin{equation}
\label{eqn:actionobvious}
\mc{A}(\alpha) = \sum_{i=1}^na_im_i
\end{equation}
where $a_i=\mc{A}(\alpha_i)>0$.  It follows from \eqref{eqn:imja} and \eqref{eqn:actionobvious} that the estimate \eqref{eqn:IJ0diff} holds with $\delta_1=\max(d_i/a_i)$.
\end{proof}

\begin{lemma}
\label{lem:Usequence}
Let $(Y,\lambda)$ be a nondegenerate contact 3-manifold with only finitely many simple Reeb orbits and with $c_1(\xi)\in H^2(Y;\Z)$ torsion. Let $J$ be a generic $\lambda$-compatible almost complex structure. Let $\epsilon>0$. Then at least one of the following is true:
\begin{description}
\item{(1)}
There exist ECH generators $\alpha$ and $\beta$, and a $U$-curve $\mc{C}\in\M^J(\alpha,\beta)$, such that $[\alpha]=[\beta]=0\in H_1(Y)$ and $\mc{A}(\alpha)-\mc{A}(\beta) < \epsilon$ and $J_0(\mc{C}) \le 1$.
\item{(2)}
For every positive integer $l$, there exist ECH generators $\alpha(0),\alpha(1),\ldots,\alpha(l)$ with $[\alpha(i)]=0\in H_1(Y)$, and $U$-curves $\mc{C}(i)\in\M^J(\alpha(i),\alpha(i-1))$ for $i=1,\ldots,l$, such that for each $i$ we have
\begin{gather}
\label{eqn:lowaction}
\mc{A}(\alpha(i)) - \mc{A}(\alpha(i-1)) < \epsilon,\\
\label{eqn:lowJ0}
J_0(\mc{C}(i)) = 2.
\end{gather}
\end{description}
\end{lemma}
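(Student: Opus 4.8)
The plan is to derive the dichotomy from a single very long connected chain of $U$-curves, using the volume property to force the total symplectic action drop along the chain to grow only like $\sqrt{N}$, and then to run a counting argument exploiting Lemma~\ref{lem:IJ0}. If alternative (1) already holds there is nothing to do, so I assume it fails: every $U$-curve $\mc{C}\in\M^J(\alpha,\beta)$ with $[\alpha]=[\beta]=0$ and $\mc{A}(\alpha)-\mc{A}(\beta)<\epsilon$ has $J_0(\mc{C})\ge 2$, and the task is to produce, for each $l$, the chain required by (2). To set up, take $\Gamma=0$ (legitimate since $c_1(\xi)$ is torsion), so Proposition~\ref{prop:Useq}(a) yields a $U$-sequence $\{\sigma_k\}$ in $ECH_*(Y,\lambda,0)$, which carries a genuine relative $\Z$-grading. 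By \eqref{eqn:actionU} the spectral invariants $c_{\sigma_k}$ are strictly increasing, and by the volume asymptotics \eqref{eqn:Useqasymptotics} we have $c_{\sigma_N}=O(\sqrt{N})$; this sublinear growth is the crucial quantitative input. Fixing a large $N$, I use \eqref{eqn:actionrepresentation} to choose a cycle $z_N$ representing $\sigma_N$ whose generators all have action $\le c_{\sigma_N}$.

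Next comes the chain-level realization. Since $U_{J,z}$ is a chain map, $U_{J,z}^{\,N-1}z_N$ is a cycle representing $U^{N-1}\sigma_N=\sigma_1\neq 0$, so it contains a generator $v_1$ with $\mc{A}(v_1)\ge c_{\sigma_1}$. Expanding $U_{J,z}^{\,N-1}$ as an $(N-1)$-fold composition and using that the total $\Z/2$ coefficient $\langle U_{J,z}^{\,N-1}z_N,v_1\rangle$ is nonzero, I extract a connected chain of generators $v_N,v_{N-1},\dots,v_1$ joined by $U$-curves $D_j\in\M^J(v_j,v_{j-1})$, with $\mc{A}(v_N)\le c_{\sigma_N}$ and $\mc{A}(v_1)\ge c_{\sigma_1}$; since $U$-curves strictly decrease action, the actions $\mc{A}(v_j)$ strictly decrease and the total drop along the chain is $O(\sqrt{N})$.

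I would then bound two families of ``bad'' curves along this chain of length $M=N-1$. Call $D_j$ \emph{wide} if its action gap is $\ge\epsilon$; since the gaps sum to $O(\sqrt{N})$, there are at most $O(\sqrt{N})/\epsilon$ wide curves. For the $J_0$ count, each $D_j$ is a $U$-curve so $I(D_j)=2$, and by Lemma~\ref{lem:IJ0}(a) both indices telescope: $\sum_j J_0(D_j)=J_0(v_N)-J_0(v_1)$ while $\sum_j I(D_j)=2M$. Substituting $J_0(v)=I(v)-(I(v)-J_0(v))$ and applying the estimate $|I(v)-J_0(v)|\le\delta_1\mc{A}(v)$ of Lemma~\ref{lem:IJ0}(b) at the two endpoints gives $\sum_j J_0(D_j)=2M-\Delta$ with $|\Delta|\le\delta_1(c_{\sigma_N}+c_{\sigma_1})=O(\sqrt{N})$. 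Since $J_0$ of any $U$-curve is bounded below by $-1$ (Proposition~\ref{prop:obscure}, as $-\chi(C_1)\ge -1$ and the remaining terms are nonnegative), and since under the failure of (1) every non-wide curve has $J_0\ge 2$, comparing the telescoped value $2M-\Delta$ with the lower bound obtained by assigning $J_0\ge -1$ to each wide curve and $J_0\ge 2$ to each non-wide curve forces the number of non-wide curves with $J_0\ge 3$ to be $O(\sqrt{N})$ as well.

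Finally I would invoke pigeonhole. The total number of bad curves (wide, or $J_0\ge 3$) is $O(\sqrt{N})$, so they cut the chain into at most $O(\sqrt{N})$ runs of consecutive \emph{good} curves (gap $<\epsilon$ and $J_0=2$); these runs contain $M-O(\sqrt{N})$ curves in all, so one run has length $\ge\Omega(\sqrt{N})$, which exceeds $l$ once $N$ is large enough. Relabeling the vertices of this run as $\alpha(0),\dots,\alpha(l)$ and the curves as $\mc{C}(i)\in\M^J(\alpha(i),\alpha(i-1))$ produces exactly the data of alternative (2). I expect the main obstacle to be precisely this bookkeeping step: one must bound \emph{both} the wide curves and the high-$J_0$ curves by a sublinear function of $N$ simultaneously, and it is only because the volume property makes $c_{\sigma_N}$ grow like $\sqrt{N}$ rather than linearly that both counts are $o(N)$ and arbitrarily long good runs are guaranteed.
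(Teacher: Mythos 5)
Your proposal is correct and follows essentially the same route as the paper's proof: negate alternative (1), extract a length-$(k-1)$ chain of $U$-curves from a $U$-sequence via the chain-level nonvanishing of $U_{J,z}^{k-1}$, use the volume asymptotics \eqref{eqn:Useqasymptotics} to bound the total action drop by $O(\sqrt{k})$, telescope $I$ and $J_0$ via Lemma~\ref{lem:IJ0} together with the lower bound $J_0\ge -1$ from Proposition~\ref{prop:obscure} to show only $O(\sqrt{k})$ indices violate \eqref{eqn:lowaction} or \eqref{eqn:lowJ0}, and conclude by pigeonhole. The only cosmetic difference is that you spell out the run-counting in the pigeonhole step more explicitly than the paper does.
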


\begin{proof}
Suppose that (1) is false, in particular that every $U$-curve $\mc{C}\in\M^J(\alpha,\beta)$ with $[\alpha]=[\beta]=0$ and $\mc{A}(\alpha) - \mc{A}(\beta) < \epsilon$ satisfies $J_0(\mc{C})\ge 2$. We must prove that (2) is true.

By Proposition~\ref{prop:Useq}(a) and our assumption that $c_1(\xi)$ is torsion, there exists a $U$-sequence $\{\sigma_k\}_{k\ge 1}$ in $ECH(Y,\lambda,\Gamma)$ with $\Gamma=0$. By \eqref{eqn:Useqasymptotics}, there is a constant $\delta_2 > 0$ such that
\[
c_{\sigma_k}(Y,\lambda) \le \delta_2 k^{1/2}.
\]

Recall from \S\ref{sec:Ucurves} that the map $U$ on $ECH(Y,\lambda,0)$ is induced by a chain map $U_{J,z}$ on $ECC(Y,\lambda,0)$ counting $U$-curves passing through a point $z\in Y$ which is not on any Reeb orbit. Fix a large positive integer $k$. By the definition of $c_{\sigma_k}$, the class $\sigma_k$ can be represented by a cycle $x=\sum_jx_j$ in the chain complex $ECC(Y,\lambda,0)$ such that each $x_j$ is an ECH generator with action $\mc{A}(x_j) \le c_{\sigma_k}$. Since $U^{k-1}\sigma_k\neq 0$ on homology, it follows that $U_{J,z}^{k-1}x\neq 0$ on the chain level. Thus there are ECH generators $\alpha(1),\ldots,\alpha(k)$ such that $[\alpha(i)]=0\in H_1(Y)$ for each $i$, the ECH generator $\alpha(k)$ is one of the $x_j$, and $\langle U_{J,z}\alpha(i),\alpha(i-1)\rangle\neq 0$ for $i=2,\ldots,k$. In particular,
\begin{equation}
\label{eqn:ipa}
\mc{A}(\alpha(k))\le \delta_2k^{1/2},
\end{equation}
and there exist $U$-curves $\mc{C}(i)\in\M^J(\alpha(i),\alpha(i-1))$ for $i=2,\ldots,k$.

We claim that for every positive integer $l$, if $k$ is sufficiently large, then there will be $l$ consecutive integers $i$ from $2$ to $k$ satisfying \eqref{eqn:lowaction} and \eqref{eqn:lowJ0}. It is enough to show that there are at most $O(k^{1/2})$ integers $i$ from $2$ to $k$ such that \eqref{eqn:lowaction} and \eqref{eqn:lowJ0} are not both satisfied.

By \eqref{eqn:ipa}, there are at most $\epsilon^{-1}\delta_2 k^{1/2}$ integers $i$ from $2$ to $k$ such that \eqref{eqn:lowaction} is not satisfied. By our hypothesis, this also implies that there are most $\epsilon^{-1}\delta_2 k^{1/2}$ integers $i$ from $2$ to $k$ such that $J_0(\mc{C}(i)) \le 1$.

Since the $\mc{C}(i)$ are $U$-curves, they each have ECH index $2$, so
\[
\sum_{i=2}^kI(\mc{C}(i)) = 2(k-1).
\]
It then follows from Lemma~\ref{lem:IJ0}(b) and the estimate \eqref{eqn:ipa} that
\[
\begin{split}
\sum_{i=2}^kJ_0(\mc{C}(i)) &= J_0(\alpha(k)) - J_0(\alpha(1))\\
& \le 2(k-1) + 2\delta_1\delta_2k^{1/2}.
\end{split}
\]
Recall that $J_0(\mc{C}(i))$ is an integer. Also, it follows from Proposition~\ref{prop:obscure} that $J_0(\mc{C}(i))\ge -1$ for each $i$. We deduce that there are at most $(2\delta_1 + 4\epsilon^{-1})\delta_2k^{1/2}$ integers $i$ from $2$ to $k$ such that \eqref{eqn:lowJ0} is not satisfied.
\end{proof}

\begin{remark}
\label{rem:technical}
The proof of Lemma~\ref{lem:Usequence} is the part of the proof of Theorem~\ref{thm:main} where we make essential use of the assumption that $c_1(\xi)$ is torsion. Without this assumption, we can still find $U$-curves in $\mc{M}^J(\alpha,\beta)$ with $\mc{A}(\alpha)-\mc{A}(\beta)<\epsilon$, such that $[\alpha]=[\beta]=\Gamma$ where $c_1(\xi) + 2\op{PD}(\Gamma)\in H^2(Y;\Z)$ is torsion. However we do not know how to control $J_0$ of these curves, because when $c_1(\xi)$ is not torsion, $J_0$ of these curves no longer depends only on $\alpha$ and $\beta$, but also on their relative homology classes by \eqref{eqn:J0ambiguity}. How to bound $J_0$ in this case is an interesting question for future research.
\end{remark}

\subsection{Controlling topological complexity}

We now use Proposition~\ref{prop:obscure} to describe the possible structure of a U-curve $\mc{C}=\mc{C}_0\sqcup C_1$ with $J_0(\mc{C}) \le 2$.

\begin{lemma}
\label{lem:J01}
Under the hypotheses of Proposition~\ref{prop:obscure}, suppose that $J_0(\mc{C})\le 1$ and that $C_1$ has at least one negative end. Then:
\begin{description}
\item{(a)} $C_1$ has genus zero and at most 3 ends.
\item{(b)} $C_1$ does not have two positive ends or two negative ends at covers of the same simple Reeb orbit.
\end{description}
\end{lemma}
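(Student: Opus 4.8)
The plan is to deduce both parts of the lemma directly from the formula of Proposition~\ref{prop:obscure},
\[
J_0(\mc{C}) = -\chi(C_1) + \sum_i(n_i^+-1) + \sum_j(n_j^--1),
\]
by turning it into a budget on $J_0(\mc{C})\le 1$. First I would rewrite $-\chi(C_1) = 2g(C_1) + p - 2$, where $p$ denotes the total number of ends of $C_1$, and record that every contribution on the right-hand side is nonnegative. Indeed, each simple orbit $\alpha_i$ in the positive orbit set carries positive ends of $\mc{C}=\mc{C}_0\sqcup C_1$ of total multiplicity $m_i>0$, which come from either a trivial cylinder in $\mc{C}_0$ or a positive end of $C_1$; in either case $n_i^+\ge 1$, so $n_i^+-1\ge 0$, and symmetrically $n_j^--1\ge 0$, while of course $g(C_1)\ge 0$.

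Next I would establish a lower bound on $p$. By hypothesis $C_1$ has at least one negative end. Since $d\lambda$ restricts to a nonnegative area form on any $J$-holomorphic curve and $C_1$ is not a union of trivial cylinders (it has $\op{ind}(C_1)=2$), its $d\lambda$-energy is strictly positive, which forces at least one positive end as well; hence $p\ge 2$. For part (a), discarding the nonnegative end-sums in the formula yields $1\ge J_0(\mc{C})\ge 2g(C_1)+p-2$, that is $2g(C_1)+p\le 3$. Together with $p\ge 2$ this forces $g(C_1)=0$ and $p\le 3$, which is precisely the assertion of (a).

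For part (b) I would argue by contradiction, keeping the end-sums this time. Suppose $C_1$ had two positive ends at covers of a single simple orbit $\alpha_{i_0}$; then $n_{i_0}^+\ge 2$, so $\sum_i(n_i^+-1)\ge 1$. Moreover $C_1$ then has at least two positive ends and at least one negative end, so $p\ge 3$ and $-\chi(C_1)=2g(C_1)+p-2\ge 1$. Substituting gives $J_0(\mc{C})\ge 1+1=2$, contradicting $J_0(\mc{C})\le 1$. The case of two negative ends at covers of one simple orbit $\beta_{j_0}$ is symmetric: now $\sum_j(n_j^--1)\ge 1$, and $p\ge 3$ (two negative ends plus the positive end guaranteed above), again forcing $J_0(\mc{C})\ge 2$, a contradiction.

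The one point I expect to require care is the bookkeeping behind these end counts: confirming that each $n_i^+-1$ and $n_j^--1$ is genuinely nonnegative (so that they may be dropped in (a) and isolated in (b)), and justifying via energy positivity that $C_1$ always carries at least one positive end, so that $p\ge 2$ in (a) and $p\ge 3$ in each case of (b). Once these are in hand, everything reduces to elementary arithmetic with the formula of Proposition~\ref{prop:obscure}, so I anticipate no genuinely difficult step.
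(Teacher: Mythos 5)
Your proof is correct and follows essentially the same route as the paper: both arguments read off $\chi(C_1)\ge -1$ (resp.\ $\chi(C_1)\ge 0$ when (b) fails) from the nonnegativity of the terms in the formula of Proposition~\ref{prop:obscure}, and combine this with the presence of at least one positive and one negative end. Your write-up merely makes explicit the bookkeeping (each $n_i^+-1,\ n_j^--1\ge 0$, and the positive end forced by energy positivity) that the paper leaves implicit.
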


\begin{proof}
It follows from equation \eqref{eqn:obscure} that $\chi(C_1)\ge -1$. Since $C_1$ always has at least one positive end, and we are assuming that $C_1$ has at least one negative end, assertion (a) follows.

If assertion (b) is false, then it follows from equation \eqref{eqn:obscure} that $\chi(C_1)\ge 0$, which is impossible since now $C_1$ has at least 3 ends.
\end{proof}

\begin{lemma}
\label{lem:J02}
Under the hypotheses of Proposition~\ref{prop:obscure}, suppose that $J_0(\mc{C})= 2$ and that $C_1$ has at least one negative end. Then:
\begin{description}
\item{(a)} If for some $i$, both $\mc{C}_0$ and $C_1$ have positive ends at covers of $\alpha_i$, or if for some $j$, both $\mc{C}_0$ and $C_1$ have negative ends at covers of $\beta_j$, then $C_1$ satisfies the conclusions of Lemma~\ref{lem:J01}.
\item{(b)} $C_1$ has at most $2$ positive ends at covers of $\alpha_i$ for each $i$, and at most $2$ negative ends at covers of $\beta_j$ for each $j$.
\item{(c)} 
If $C_1$ has $2$ positive ends at covers of $\alpha_i$ for some $i$, then $C_1$ has exactly one negative end. Likewise, if $C_1$ has $2$ negative ends at covers of $\beta_j$ for some $j$, then $C_1$ has exactly one positive end.
\end{description}
\end{lemma}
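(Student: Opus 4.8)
The plan is to reduce all three assertions to the single formula \eqref{eqn:obscure} of Proposition~\ref{prop:obscure}. Write $e(C_1)$ for the total number of ends of $C_1$, so that, $C_1$ being irreducible, $-\chi(C_1) = 2g(C_1) - 2 + e(C_1)$. First I would record two elementary positivity facts. Each simple orbit $\alpha_i$ occurring in $\alpha$ carries total positive covering multiplicity $m_i \geq 1$, split between the positive ends of $C_1$ and the covers of $\R\times\alpha_i$ in $\mc{C}_0$, so $n_i^+ \geq 1$; likewise $n_j^- \geq 1$. Hence every summand in \eqref{eqn:obscure} is nonnegative. Moreover $C_1$ always has at least one positive end, and by hypothesis at least one negative end, so $e(C_1) \geq 2$ and therefore $-\chi(C_1) \geq 0$ as well. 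Substituting into \eqref{eqn:obscure} with $J_0(\mc{C}) = 2$ gives the master identity
\[
2 = \bigl(2g(C_1) - 2 + e(C_1)\bigr) + \sum_i(n_i^+ - 1) + \sum_j(n_j^- - 1),
\]
in which all three grouped quantities are nonnegative integers. The common strategy is then that each structural feature to be ruled out or constrained forces one of these quantities to grow, while the fixed budget of $2$ on the right-hand side produces the conclusion.

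For part (b), I would argue that three or more positive ends of $C_1$ at a single $\alpha_{i_0}$ force $n_{i_0}^+ \geq 3$, hence $n_{i_0}^+ - 1 \geq 2$; the master identity then forces $-\chi(C_1) = 0$, i.e. $g(C_1) = 0$ and $e(C_1) = 2$, which is impossible since three coincident positive ends together with the obligatory negative end give $e(C_1) \geq 4$. The bound on negative ends is symmetric. For part (c), two positive ends of $C_1$ at $\alpha_{i_0}$ give $n_{i_0}^+ - 1 \geq 1$, so the identity yields $2g(C_1) - 2 + e(C_1) \leq 1$, hence $e(C_1) \leq 3$. As $C_1$ then has at least two positive ends, it can have at most one negative end, and by hypothesis it has at least one, so exactly one. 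The other half of (c) follows the same way after exchanging the roles of positive and negative ends, using again that $C_1$ always has at least one positive end.

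For part (a), the overlap hypothesis — say $\mc{C}_0$ and $C_1$ both have ends at $\alpha_{i_0}$ on the positive side — gives $n_{i_0}^+ = (\text{number of positive ends of } C_1 \text{ at } \alpha_{i_0}) + 1 \geq 2$, so $n_{i_0}^+ - 1 \geq 1$; the master identity then forces $-\chi(C_1) \leq 1$, i.e. $g(C_1) = 0$ and $e(C_1) \leq 3$, which is exactly the first conclusion of Lemma~\ref{lem:J01}. For the second conclusion I would suppose, for contradiction, that $C_1$ has two ends at covers of a single simple orbit; this produces a second unit of excess, forcing $-\chi(C_1) = 0$ and hence $e(C_1) = 2$, whereas the coincident pair together with the distinct overlap end and the obligatory negative end force $e(C_1) \geq 3$. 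The negative-overlap form of the hypothesis is handled identically.

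The argument is otherwise a mechanical deduction from \eqref{eqn:obscure}, and the only step demanding genuine care — the closest thing to an obstacle — is the bookkeeping in part (a) when the repeated pair of ends and the overlap occur at the \emph{same} orbit $\alpha_{i_0}$. There one must not double-count two separate unit contributions but instead observe directly that $n_{i_0}^+ \geq 3$, which still gives $n_{i_0}^+ - 1 \geq 2$ and the same collapse to $e(C_1) = 2$ contradicting $e(C_1)\geq 3$. Tabulating the handful of coincidence cases (repeat on the same side as the overlap versus the opposite side, and at the same orbit versus a different one) and checking that each forces $e(C_1) \geq 3$ is the part I would write out most carefully.
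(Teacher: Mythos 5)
Your proposal is correct and follows essentially the same route as the paper: everything is read off from equation \eqref{eqn:obscure}, using that each $n_i^+-1$ and $n_j^--1$ is nonnegative and that $-\chi(C_1)\ge 0$ since $C_1$ has at least one end of each sign. Your write-up is merely more explicit than the paper's (which disposes of (a) by reference to the proof of Lemma~\ref{lem:J01}), and your handling of the coincidence case $n_{i_0}^+\ge 3$ in part (a) is the right way to avoid double-counting.
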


\begin{proof}
(a) This follows from equation \eqref{eqn:obscure} as in the proof of Lemma~\ref{lem:J01}.

(b) If $C_1$ has more than two positive ends at covers of $\alpha_i$ for some $i$, or if $C_1$ has more than two negative ends at covers of $\beta_j$ for some $j$, then equation \eqref{eqn:obscure} implies that $\chi(C_1)\ge 0$, which is a contradiction since in this case $C_1$ has at least $4$ ends.

(c) If $C_1$ has $2$ positive ends at covers of $\alpha_i$ for some $i$, then by equation \eqref{eqn:obscure}, $\chi(C_1)\ge -1$. Since we are assuming that $C_1$ has at least one negative end, it follows that $C_1$ has exactly one negative end.  The proof in the case where $C_1$ has $2$ negative ends at covers of $\beta_j$ for some $j$ is analogous.
\end{proof}

\subsection{Exceptional Reeb orbits}

\begin{definition}
\label{def:exceptional}
Let $(Y,\lambda)$ be a nondegenerate contact three-manifold. Let $\gamma$ be a simple Reeb orbit and let $m$ be a positive integer. We say that the pair $(\gamma,m)$ is {\bf exceptional\/}  if $|p_\gamma^+(m)| + |p_\gamma^-(m)| \le 3$.
\end{definition}

\begin{lemma}
\label{lem:exceptional}
Let $(Y,\lambda)$ be a nondegenerate contact three-manifold, and let $\gamma$ be a simple Reeb orbit. Then there are only finitely many positive integers $m$ such that the pair $(\gamma,m)$ is exceptional.
\end{lemma}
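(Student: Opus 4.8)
The plan is to show that the number of parts of the two partitions grows without bound, i.e.\ that $|p_\gamma^+(m)| + |p_\gamma^-(m)| \to \infty$ as $m\to\infty$, where $|\cdot|$ denotes the number of parts; the finiteness assertion is then immediate. I would split into cases according to whether $\gamma$ is hyperbolic or elliptic, using the rotation number $\theta\in\R/\Z$ of $\gamma$ and the fact that $p_\gamma^\pm(m)=p_\theta^\pm(m)$ depends only on the class of $\theta$ in $\R/\Z$.

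The hyperbolic cases follow directly from the explicit partition formulas already recorded. If $\gamma$ is positive hyperbolic, then by \eqref{eqn:partitionh+} both partitions equal $(1,\ldots,1)$, so $|p_\gamma^+(m)|+|p_\gamma^-(m)|=2m$. If $\gamma$ is negative hyperbolic, then by \eqref{eqn:partitionh-} each partition has $\ceil{m/2}$ parts, so the sum is $2\ceil{m/2}$. In either case the sum tends to infinity, so only a couple of small values of $m$ can be exceptional.

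The elliptic case is the heart of the matter, and where I expect the real work to lie. Here $\theta$ is irrational, and I would reinterpret the count geometrically: choosing a representative $\theta\in(0,1)$, the number $|p_\theta^+(m)|$ is the number of edges of the path $\Lambda_\theta^+(m)$, which is precisely the upper boundary of the convex hull of the lattice points $\{(x,y)\in\Z^2 : 0\le x\le m,\ y\le\theta x\}$. It suffices to show that this edge count tends to infinity (since $|p_\theta^-(m)|\ge 0$, controlling $p^+$ alone controls the sum). The plan is then to invoke the continued fraction expansion of $\theta$: infinitely many convergents $p_n/q_n$ lie below $\theta$, with denominators $q_n$ strictly increasing to infinity, and each such lower convergent is a best lower approximation to $\theta$ and therefore contributes, for $m$ large, a vertex $(q_n,p_n)$ of the upper convex hull. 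Since the number of lower convergents with $q_n\le m$ grows without bound as $m\to\infty$, so does the number of vertices, hence the number of edges $|p_\theta^+(m)|$.

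The main obstacle is exactly the claim, in the elliptic case, that the lower convergents are genuine vertices of $\Lambda_\theta^+(m)$ (equivalently, that the vertices of the upper hull are best lower approximations to $\theta$). Note that a naive pigeonhole bound does not suffice: a single part of $p_\theta^+(m)$ can be genuinely long --- for instance, for the golden rotation number the edges have horizontal lengths equal to Fibonacci numbers --- so one cannot simply bound the part sizes and divide into $m$. I would therefore justify the vertex claim from the standard best-approximation property of continued fraction convergents, together with the characterization of upper hull vertices as those below-the-line lattice points minimizing the vertical distance to the line $y=\theta x$ in their range. Once this one structural point is in place, the remainder is bookkeeping.
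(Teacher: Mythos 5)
Your treatment of the hyperbolic cases is correct and agrees with the paper's. But the elliptic case, which you rightly identify as the heart of the matter, rests on a false claim: it is \emph{not} true that $|p_\theta^+(m)|\to\infty$ as $m\to\infty$ for irrational $\theta$. Indeed, let $p/q$ be any convergent of $\theta$ from below. Then $\floor{q\theta}=p$, and since $p/q$ is a best approximation from below there is no fraction $y/x$ with $0<x\le q$ and $p/q<y/x<\theta$; hence no lattice point lies strictly between the segment from $(0,0)$ to $(q,p)$ and the line $y=\theta x$, and since $\gcd(p,q)=1$ that segment contains no interior lattice points. Therefore $\Lambda_\theta^+(q)$ is this single segment and $p_\theta^+(q)=(q)$, so $|p_\theta^+(q)|=1$. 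As there are infinitely many such denominators $q$, the quantity you propose to control does not tend to infinity, and the reduction ``it suffices to control $p^+$ alone'' collapses. The underlying error is in the convex-hull picture: the description of the vertices as best lower approximations is correct for the Klein sail of the \emph{angle} between two rays, but $\Lambda_\theta^+(m)$ is anchored at $(0,0)$, which lies on the line $y=\theta x$, and at $(m,\floor{m\theta})$, whose slope from the origin tends to $\theta$; the chord joining these two endpoints eventually passes strictly above any fixed lower convergent, which is thereby swallowed into the interior of the hull rather than appearing as a vertex. Your own observation about Fibonacci-length edges for the golden rotation number is precisely this phenomenon, and already contradicts the statement you set out to prove.

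The lemma cannot be established by bounding $|p_\theta^+(m)|$ and $|p_\theta^-(m)|$ separately; what is true is that they cannot \emph{both} be small except for finitely many $m$, and this is how the paper argues. Using $p_\theta^-(m)=p_{-\theta}^+(m)$ one reduces to the case $p_\theta^+(m)=(m)$ with $|p_\theta^-(m)|\le 2$. A Pick's-theorem argument shows that $p_\theta^+(m)=(m)$ forces $1\in p_\theta^-(m)$, which rules out $|p_\theta^-(m)|=1$ unless $m=1$; and when $p_\theta^-(m)=(m-1,1)$ an area computation with the relevant empty lattice triangle yields $m\{\theta\}-\{m\theta\}=1$, hence $m\{\theta\}<2$, which bounds $m$. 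You would need to supply a coupled argument of this kind; the one-sided convex-hull and continued-fraction route cannot succeed.
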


\begin{proof}

If $\gamma$ is hyperbolic, then Lemma~\ref{lem:exceptional} follows directly from \eqref{eqn:partitionh+} and \eqref{eqn:partitionh-}.  So assume that $\gamma$ is elliptic.  We need to show that if $\theta$ is an irrational number then there are only finitely many positive integers $m$ with $|p_\theta^+(m)| + |p_\theta^-(m)|\in\{2,3\}$. Since $p_\theta^-(m) = p_{-\theta}^+(m)$, it is enough to show that there are only finitely many positive integers $m$ with $p_\theta^+(m)=(m)$ and $|p_\theta^-(m)|\in\{1,2\}$.  

\medskip

{\em Claim\footnote{More generally, if $\theta$ is irrational and $m>1$ then $1\notin p_\theta^+(m) \Leftrightarrow 1\in p_\theta^-(m)$. This is shown in \cite[Eq.\ (22)]{pfh2}, and can also be proved similarly to the proof of the Claim.}.\/} If $\theta$ is irrational and $p_\theta^+(m)=(m)$, then $1\in p_\theta^-(m)$.

\medskip

{\em Proof of Claim.\/} Let $a$ denote the smallest element of $p_\theta^-(m)$. By the definition of $p_\theta^-$ in \S\ref{sec:partition}, the triangle with vertices $(m,\floor{m\theta})$, $(m,\ceil{m\theta})$, and $(m-a,\ceil{(m-a)\theta})$ does not contain any lattice points other than its vertices. Thus by Pick's theorem, this triangle has area $1/2$. But this triangle also has area $a/2$, so $a=1$.

\medskip

It follows from the Claim that
if $p_\theta^+(m)=p_\theta^-(m)=(m)$ then $m=1$. 

It remains to show that there are only finitely many $m$ with $p_\theta^+(m)=(m)$ and $|p_\theta^-(m)|=2$. In this case, it follows from the Claim that $p_\theta^-(m)=(m-1,1)$. Then by the definition of $p_\theta^+$ and $p_\theta^-$, the quadrilateral with vertices $(0,0)$, $(m,\floor{m\theta})$, $(m,\ceil{m\theta})$, and $(m-1,\ceil{(m-1)\theta})$ contains no lattice points other than its vertices. This quadrilateral contains the triangle with vertices $(0,0)$, $(m,\floor{m\theta})$, and $(m-1,\ceil{(m-1)\theta})$, so that triangle also contains no lattice points other than its vertices, and thus has area $1/2$. Recomputing this area using determinants, we obtain
\[
m\ceil{(m-1)\theta} - (m-1)\floor{m\theta}=1.
\]
Since $p_\theta^-(m)=(m-1,1)$, we also know from \cite[Ex.\ 3.13(b)]{bn} that
\[
\ceil{(m-1)\theta} + \ceil{\theta} = \ceil{m\theta}.
\]
Substituting this equation into the previous one, we obtain
\[
\begin{split}
1 &= m(\ceil{m\theta}-\ceil{\theta}) - (m-1)\floor{m\theta}\\
&= (m-1) + \ceil{m\theta} - m\ceil{\theta}\\
&= (m-1) + m\theta + 1 - \{m\theta\} - m(\theta + 1 - \{\theta\})\\
&= m\{\theta\} - \{m\theta\}.
\end{split}
\]
This implies that $m\{\theta\}<2$, so there are only finitely many such $m$.
\end{proof}

\subsection{Low energy curves}

\begin{lemma}
\label{lem:lowenergy}
Let $(Y,\lambda)$ be a nondegenerate contact three-manifold with only finitely many simple Reeb orbits. Then there exists a constant $\epsilon>0$ with the following property. Let $\alpha$ and $\beta$ be ECH generators with $\mc{A}(\alpha) - \mc{A}(\beta) < \epsilon$. Let $J$ be a generic $\lambda$-compatible almost complex structure on $\R\times Y$. Let $\mc{C}=\mc{C}_0\sqcup C_1\in\M^J(\alpha,\beta)$ be a $U$-curve. Then:
\begin{description}
\item{(a)} Let $\alpha'$ and $\beta'$ denote the orbit sets for which $C_1\in\M^J(\alpha',\beta')$. Then there is at least one pair $(\gamma,m)\in\alpha'$ such that $(\gamma,m)$ is not exceptional, and likewise there is at least one nonexceptional pair $(\gamma,m)\in \beta'$.
\item{(b)}
$C_1$ has at least one positive end and at least one negative end at elliptic Reeb orbits.
\item{(c)}
If $C_1$ has genus 0 and at most 3 ends, then the component of the moduli space of $J$-holomorphic curves containing $C_1$ is compact.
\end{description}
\end{lemma}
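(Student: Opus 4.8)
The plan is to make a single choice of $\epsilon$ governed by the discreteness of low-action orbit sets, and then to prove (a), (b), (c) by showing that in each case the relevant orbit data is forced to have \emph{bounded} symplectic action, at which point the choice of $\epsilon$ yields a contradiction. Concretely, let $\gamma_1,\dots,\gamma_n$ be the simple Reeb orbits, with $a_k=\mc{A}(\gamma_k)>0$. Since by Lemma~\ref{lem:exceptional} each $\gamma_k$ admits only finitely many $m$ with $(\gamma_k,m)$ exceptional, I would fix a constant $A_0$ simultaneously bounding: the action of any orbit set all of whose pairs are exceptional; the action $\sum_{\gamma\text{ hyperbolic}}\mc{A}(\gamma)$; and $2\max_k a_k$. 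The set $S$ of orbit sets of action $\le A_0+1$ is finite, so $\epsilon_0:=\min\{\mc{A}(\zeta)-\mc{A}(\eta):\zeta,\eta\in S,\ \mc{A}(\zeta)>\mc{A}(\eta)\}>0$, and I take $\epsilon<\min(\epsilon_0,1)$. The recurring \emph{mechanism} is this: if $C_1\in\mathcal{M}^J(\alpha',\beta')$ satisfies $\mc{A}(\alpha')\le A_0$, then since $d\lambda\ge 0$ on a $J$-holomorphic curve, $0\le\int_{C_1}d\lambda=\mc{A}(\alpha')-\mc{A}(\beta')<\epsilon$, so $\beta'\in S$ too; then $\mc{A}(\alpha')-\mc{A}(\beta')<\epsilon\le\epsilon_0$ forces $\mc{A}(\alpha')=\mc{A}(\beta')$, whence $\int_{C_1}d\lambda=0$, so $C_1$ is a union of trivial cylinders, contradicting $\op{ind}(C_1)=2$.

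For (a) I argue by contradiction: if every pair in $\alpha'$ were exceptional, then by the finiteness in Lemma~\ref{lem:exceptional} every multiplicity in $\alpha'$ is bounded, so $\mc{A}(\alpha')\le A_0$, and the mechanism applies; the statement for $\beta'$ is symmetric. For (b), if $C_1$ had no positive end at an elliptic orbit, then $\alpha'$ would consist only of hyperbolic orbits, each appearing with multiplicity one because $\alpha$ is admissible; hence $\mc{A}(\alpha')\le A_0$ and the mechanism again applies. The negative-end statement is symmetric.

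Part (c) is where the real work lies. Assuming $C_1$ has genus zero and at most three ends, I first note that $c_N(C_1)=h_+(C_1)/2\le 3/2<\op{ind}(C_1)=2$, so by automatic transversality (as used in the proof of Proposition~\ref{prop:foliation}(b)) the space $\mathcal{M}^J_{C_1}/\R$ is a $1$-manifold; failure of compactness would, by SFT compactness together with index additivity \eqref{eqn:Iadditive} and Proposition~\ref{prop:lowI}, produce a two-level broken configuration consisting of index-one curves $C^+\in\mathcal{M}^J(\alpha',\delta)$ and $C^-\in\mathcal{M}^J(\delta,\beta')$ through an ECH generator $\delta$ with $\mc{A}(\alpha')>\mc{A}(\delta)>\mc{A}(\beta')$. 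The plan is then to show $\delta\in S$, i.e.\ $\mc{A}(\delta)\le A_0$: for then the mechanism, applied with $\delta$ in place of $\beta'$, gives $\mc{A}(\alpha')-\mc{A}(\delta)\ge\epsilon_0>\epsilon$, contradicting $\mc{A}(\alpha')-\mc{A}(\delta)<\mc{A}(\alpha')-\mc{A}(\beta')<\epsilon$. To bound $\mc{A}(\delta)$ I would use that $C_1$ has genus zero and at most three ends to bound the number of ends of $C^\pm$, hence the number of trivial cylinders and the number of parts of the partitions occurring at $\delta$; then, matching the negative partition of $C^+$ with the positive partition of $C^-$ at each orbit of $\delta$, and invoking the partition conditions (Proposition~\ref{prop:partitions}) together with the fact that $p^+_\theta$ and $p^-_\theta$ are disjoint for elliptic orbits (the Claim in the proof of Lemma~\ref{lem:exceptional}), I would conclude that every multiplicity in $\delta$ is bounded, so $\mc{A}(\delta)\le A_0$.

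The main obstacle is precisely this last topological-combinatorial step controlling the breaking. The subtlety is that trivial cylinders sitting at the breaking orbits can pad the two matched partitions, so one cannot simply assert that the negative partition of $C^+$ equals the positive partition of $C^-$; without trivial cylinders the matching would immediately force each elliptic multiplicity in $\delta$ to be $1$ via the Claim, but in general one must first exploit the genus-zero and $\le 3$-ends hypotheses to cap the number of trivial cylinders and partition parts, and only then use the incompatibility of $p^+_\theta$ and $p^-_\theta$ (the absence of a common large part) to exclude large multiplicities at $\delta$. Once this bound is in hand, every assertion reduces to the clean discreteness-of-action mechanism recorded above.
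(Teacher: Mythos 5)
Your choice of $\epsilon$ and your arguments for parts (a) and (b) are correct and essentially equivalent to the paper's (the paper packages the same idea as: the action of any orbit set all of whose pairs are exceptional is $\epsilon$-isolated from the action of every other orbit set). The genuine gap is in part (c), and it sits exactly at the step you yourself flag as ``the main obstacle'': the plan to prove $\mc{A}(\delta)\le A_0$ cannot succeed. The intermediate orbit set $\delta$ of a two-level breaking satisfies $\mc{A}(\beta')<\mc{A}(\delta)<\mc{A}(\alpha')$, and $\mc{A}(\beta')$ is not bounded by any constant depending only on $(Y,\lambda)$ --- indeed by your own part (a) the orbit set $\beta'$ contains a nonexceptional pair $(\gamma,m)$, and such $m$, hence $\mc{A}(\beta')$, can be arbitrarily large in the situations the lemma must cover (in the application the actions of the generators grow like $k^{1/2}$). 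Even after stripping off trivial cylinders common to both levels, the ``active'' part of $\delta$ can contain an elliptic orbit of arbitrarily large multiplicity $m'$: the partition conditions only force the negative ends of the top level at that orbit to realize $p^-(m')$ and the positive ends of the bottom level to realize $p^+(m')$, and these are two partitions of the \emph{same} integer $m'$ carried by \emph{different} curves, so their disjointness as sets of parts yields no contradiction and no bound on $m'$ (both can simultaneously have very few parts for infinitely many $m'$).

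The paper's proof of (c) takes a different route that never bounds $\mc{A}(\delta)$. Since every simple Reeb orbit has action $>\epsilon$ while each level drops action by less than $\epsilon$, both nontrivial level components $C^+_1$ and $C^-_1$ must have at least one negative end, so $\chi(C^\pm_1)\le 0$; since $\chi(C_1)\ge -1$, at least one of them is a cylinder. A nontrivial index-one cylinder must, by the Conley--Zehnder parity in \eqref{eqn:Fredholmindex}, have exactly one end at a positive hyperbolic orbit, and by the partition conditions \eqref{eqn:partitionh+} and \eqref{eqn:partitionh-} that orbit is simple or the double cover of a negative hyperbolic orbit --- a finite list. The action difference across this cylinder is less than $\epsilon$, contradicting the $\epsilon$-isolation of the actions of the orbits on that list (the paper's condition (iii), which you should fold into your choice of $A_0$ and $\epsilon$). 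You need this argument, or a genuine replacement for your unproved combinatorial step, before part (c) can be considered proved.
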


\begin{proof}
Since there are only finitely many simple Reeb orbits, and since by Lemma~\ref{lem:exceptional} there are only finitely many exceptional pairs $(\gamma,m)$  there are only finitely many orbit sets $x$ where every pair $(\gamma,m) \in x$ is exceptional.  It follows that we can choose $\epsilon>0$ such that the following holds:
\begin{description}
\item{(i)}
If $x$ is an orbit set such that every pair $(\gamma,m)\in x$ is exceptional, and if $y$ is another orbit set with $\mc{A}(x)\neq \mc{A}(y)$, then $|\mc{A}(x) - \mc{A}(y)| > \epsilon$.
\end{description}
In particular, we also have:
\begin{description}
\item{(ii)}
If $\gamma$ is a simple Reeb orbit then $\mc{A}(\gamma) > \epsilon$.
\item{(iii)}
If $\gamma$ is a positive hyperbolic Reeb orbit which is either simple or a double cover of a simple negative hyperbolic orbit, and if $\gamma'$ is any other Reeb orbit with $\mc{A}(\gamma)\neq\mc{A}(\gamma')$, then $|\mc{A}(\gamma) - \mc{A}(\gamma')| > \epsilon$. 
\end{description}
We claim that properties (i)-(iii) above imply assertions (a)--(c). To see this, let $J$ be a generic $\lambda$-compatible almost complex structure, and let $\mc{C} = \mc{C}_0\sqcup C_1\in\M^J(\alpha,\beta)$ be a $U$-curve with $\mc{A}(\alpha)-\mc{A}(\beta)<\epsilon$.
We can write $\alpha=\alpha'\gamma$ and $\beta=\beta'\gamma$, where $\gamma$ is the orbit set such that $\mc{C}_0=\R\times\gamma$, and $C_1\in\M^J(\alpha',\beta')$. Then we also have $0 < \mc{A}(\alpha') - \mc{A}(\beta') < \epsilon$. We now prove (a)--(c) as follows.

(a) 
This follows immediately from property (i).

(b)
Suppose to get a contradiction that (b) does not hold. Without loss of generality, $C_1$ does not have a positive end at an elliptic Reeb orbit. This means that all orbits in $\alpha'$ are hyperbolic. Then, since $\alpha$ is an ECH generator, all orbits in $\alpha'$ have multiplicity one. In particular, every element of $\alpha'$ is exceptional. Since $C_1$ is nontrivial, $\mc{A}(\alpha') \neq \mc{A}(\beta')$. This contradicts property (i) with $x=\alpha'$ and $y=\beta'$.

(c) Suppose to get a contradiction that $C_1$ has genus 0 and at most 3 ends, but the component of the moduli space of $J$-holomorphic curves containing $C_1$ is not compact.  Then by the compactness theorem in \cite[Lem. 5.11]{bn}, there exists a sequence of $J$-holomorphic curves in the moduli space containing $C_1$ which converges in an appropriate sense to a ``broken $J$-holomorphic current'' from $\alpha'$ to $\beta'$ with more than 1 level and with total ECH index 2. This broken $J$-holomorphic current is a $k$-tuple of $J$-holomorphic currents $(\mc{C}(1),\ldots,\mc{C}(k))$ where $k>1$, the current $\mc{C}(i)\in\M^J(\alpha(i),\alpha(i-1))$ is not a union of covers of trivial cylinders, $\alpha(k)=\alpha'$ and $\alpha(0)=\beta'$, and $\sum_{i=1}^kI(\mc{C}(i))=2$. By Proposition~\ref{prop:lowI}, it follows that $k=2$ and $I(\mc{C}(1))=I(\mc{C}(2))=1$.

Write $\mc{C}^+=\mc{C}(2)$ and $\mc{C}^-=\mc{C}(1)$. By Proposition~\ref{prop:lowI}(1), we can write
\[
\begin{split}
\mc{C}^+ &=\mc{C}^+_0\sqcup C^+_1\in\M^J(\alpha',\eta),\\
\mc{C}^- & = \mc{C}^-_0\sqcup C^-_1 \in\M^J(\eta,\beta')
\end{split}
\]
for some (not necessarily admissible) orbit set $\eta$, where each component of $\mc{C}^{\pm}_0$ is a trivial cylinder, while $C^{\pm}_1$ is embedded and has $\op{ind}(C^{\pm}_1) = I(C^{\pm}_1) = 1$.  Note that $\mc{A}(\alpha') - \mc{A}(\eta)$ and $\mc{A}(\eta) - \mc{A}(\beta')$ are both less than $\epsilon$.

By property (ii), $C^+_1$ and $C^-_1$ each have at least one negative end (and of course at least one positive end). In particular, $\chi(C^+_1), \chi(C^-_1) \le 0$.

Now at least one of $C^+_1, C^-_1$ must be a cylinder. Otherwise $\chi(C^+_1), \chi(C^-_1)\le -1$, so $\chi(C_1)\le -2$ (by the definition of convergence to a broken holomorphic current in \cite[\S5.3]{bn}), contradicting our assumption that $C_1$ has genus 0 and at most 3 ends.

Since the cylinder $C^+_1$ or $C^-_1$ has Fredholm index $1$, the two Conley-Zehnder terms in \eqref{eqn:Fredholmindex} must have opposite parity, which means that one end is at a positive hyperbolic orbit, while the other end is at an elliptic or negative hyperbolic orbit. Since this cylinder also has ECH index 1, it follows from the partition conditions in Proposition~\ref{prop:partitions} and \eqref{eqn:partitionh+} and \eqref{eqn:partitionh-} that the positive hyperbolic orbit is either simple, or the double cover of a negative hyperbolic orbit. The existence of this cylinder now contradicts condition (iii).
\end{proof}

\subsection{Existence of a special curve}
\label{sec:esc}

\begin{proof}[Proof of Proposition~\ref{prop:special}.] Suppose there are $n$ simple Reeb orbits. We now invoke Lemma~\ref{lem:Usequence}, with the constant $\epsilon$ provided by Lemma~\ref{lem:lowenergy}.

Suppose that case (1) in Lemma~\ref{lem:Usequence} holds. We then have a $U$-curve $\mc{C}=\mc{C}_0\sqcup C_1\in\M^J(\alpha,\beta)$ with $\mc{A}(\alpha) - \mc{A}(\beta)<\epsilon$ and $J_0(\mc{C})\le 1$. We claim that the curve $C_1$ is special. Condition (a) in the definition of ``special'' holds because $\mc{C}$ is a $U$-curve. Condition (b) holds by Lemma~\ref{lem:lowenergy}(b) above. Conditions (c) and (d) then hold by Lemma~\ref{lem:J01}. Since $C_1$ has at most three ends, at least two of which are at elliptic orbits, and since $C_1$ has even Fredholm index, it follows that $C_1$ cannot have an end at a positive hyperbolic Reeb orbit. Since $\alpha$ and $\beta$ are ECH generators, if $C_1$ has an end at a negative hyperbolic Reeb orbit, then this orbit is simple. This proves condition (e) in the definition of ``special''. Condition (f) holds by Lemma~\ref{lem:lowenergy}(c).

Suppose now that case (2) in Lemma~\ref{lem:Usequence} holds. We can then put $l=2n+1$ into Lemma~\ref{lem:Usequence} to obtain ECH generators $\alpha(0),\ldots,\alpha(2n+1)$, and $U$-curves
\[
\mc{C}(i) = \mc{C}(i)_0 \sqcup C(i)_1 \in \M^J(\alpha(i),\alpha(i-1))
\]
for $i=1,\ldots,2n+1$, such that \eqref{eqn:lowaction} and \eqref{eqn:lowJ0} hold for each $i$. We claim that for at least one $i$, the curve $C(i)_1$ is special. We know that for each $i$, the curve $C(i)_1$ satisfies condition (a) in the definition of ``special'' since $\mc{C}(i)$ is a $U$-curve, and also condition (b) by Lemma~\ref{lem:lowenergy}(b). We need to show that for at least one $i$, the curve $C(i)_1$ also satisfies conditions (c) and (d); the conditions (e) and (f) will then follow as before.

We claim that for some $i$, the curves $\mc{C}(i)_0$ and $C(i)_1$ have positive ends at covers of the same simple orbit, or negative ends at covers of the same simple orbit. Then by Lemma~\ref{lem:J02}(a), the curve $C(i)_1$ satisfies conditions (c) and (d) in the definition of ``special'', and we are done.

To prove the claim, suppose that for all $i$, the curves $\mc{C}(i)_0$ and $C(i)_1$ do not have positive ends at covers of the same simple orbit, or negative ends at covers of the same simple orbit. We then obtain a contradiction as follows.

Let us call the curve $C(i)_1$ ``Type I'' if it does not have two negative ends at covers of the same simple orbit. Call the curve $C(i)_1$ ``Type II'' if it does not have two positive ends at covers of the same simple orbit. By Lemma~\ref{lem:J02}(c), each curve $C(i)_1$ is Type I or Type II (or possibly both).

Suppose that for some $i>1$, the curve $C(i)_1$ is Type I. Then by Lemma~\ref{lem:lowenergy}(a), there is a simple orbit $\gamma$ such that $C(i)_1$ has exactly one negative end at a cover of $\gamma$, of multiplicity $m$, and the pair $(\gamma,m)$ is not exceptional. Then the curve $C(i-1)_1$ cannot have any positive ends at covers of $\gamma$, by Lemma~\ref{lem:J02}(b) and the definition of ``exceptional'', hence the component of $C(i-1)_0$ along $\gamma$ must have multiplicity $m$.  It then follows by downward induction on $j$ that if $1\le j<i$, then the curve $C(j)_1$ cannot have any positive or negative ends at covers of $\gamma$.

Likewise, if for some $i<2n+1$, the curve $C(i)_1$ is Type II, then there is a simple orbit $\gamma$ such that $C(i)_1$ has a positive end at a cover of $\gamma$, but for $i<j\le 2n+1$, the curve $C(j)_1$ cannot have any positive or negative ends at covers of $\gamma$.

Now among the $2n-1$ curves $C(2)_1,\ldots,C(2n)_{1}$, at least $n$ of them have Type I, or at least $n$ of them have Type II (or possibly both). In the first case, there are no possible orbits at which $C(1)_1$ can have ends, which is the desired contradiction. In the second case, there are no possible orbits at which $C(2n+1)_1$ can have ends, which is likewise a contradiction.
\end{proof}

\section{Two or infinitely many Reeb orbits}
\label{sec:franks}

In this section we complete the proof of the main theorem~\ref{thm:main}.

\subsection{Invoking a theorem of Franks}

To prove Theorem~\ref{thm:main}, we will need one more dynamical fact.  

\begin{proposition}
\label{prop:usefranks}
Let $C$ be a surface diffeomorphic to $S^2$ with $k$ points removed.  Let $\omega$ be an area form on $C$ with $\int_C\omega<\infty$.  Let $f:(C,\omega)\to (C,\omega)$ be an area-preserving diffeomorphism which acts as the identity on the set of ends.
Then:
\begin{itemize}
\item If $k=2$, then $f$ has either no periodic orbits or infinitely many.
\item If $k>2$, then $f$ has infinitely many periodic orbits.
\end{itemize}
\end{proposition}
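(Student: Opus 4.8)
The plan is to reduce both cases to a single dynamical input: Franks's theorem that an area-preserving homeomorphism of the open annulus with finite total area has either no periodic points or infinitely many. The leverage comes entirely from the hypothesis that $f$ acts as the identity on the set of ends, which I would use to cap off ends both to create annulus topology and, when $k > 2$, to manufacture a periodic point for free.

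First I would dispose of the case $k = 2$. Here $C$ is diffeomorphic to $S^2$ with two points removed, i.e.\ the open annulus, and $\int_C \omega < \infty$. Since $f$ is area-preserving, hence orientation-preserving, and does not interchange the two ends (it fixes each of them), it is a homeomorphism of the open annulus of exactly the type to which Franks's theorem applies, and the asserted dichotomy is immediate. This case needs essentially no argument beyond invoking Franks.

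For $k > 2$ I would argue by capping off all but two ends. Enumerate the ends $e_1,\dots,e_k$, and form the space $\bar C$ obtained from $C$ by adjoining one ideal point $p_i$ at each end $e_i$ for $i = 3,\dots,k$; then $\bar C$ is again diffeomorphic to the open annulus. Because $f$ preserves each individual end, it should extend continuously to a homeomorphism $\bar f$ of $\bar C$ fixing each added point $p_i$, so that $p_3,\dots,p_k$ are fixed points of $\bar f$, of which there are $k - 2 \ge 1$. The form $\omega$ extends to a finite Borel measure on $\bar C$ that is positive on open sets and invariant under $\bar f$ (the added points carry no measure, but every neighborhood of $p_i$ meets $C$). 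Applying Franks's theorem to $\bar f$, which possesses at least the fixed point $p_3$, I obtain infinitely many periodic points of $\bar f$; since only the finitely many points $p_3,\dots,p_k$ lie outside $C$, infinitely many of these are periodic points of $f = \bar f|_C$ itself.

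The step I expect to be the main obstacle is justifying the continuous extension across the capped points, i.e.\ showing that ``$f$ acts as the identity on the set of ends'' really guarantees that $\bar f$ is a well-defined homeomorphism fixing each $p_i$ and that the extended measure satisfies the precise hypotheses of Franks's theorem (a finite invariant Borel measure of full support). Concretely this requires checking that $f$ carries a neighborhood basis of each end to a neighborhood basis of the same end, i.e.\ properness at the ends; this is exactly the content of the induced action on the space of ends being trivial, but it should be spelled out. One should also pin down which formulation of Franks's theorem is being used, so that the merely-continuous capped-off map is certified to meet its assumptions.
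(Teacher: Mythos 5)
Your proposal is correct and follows essentially the same route as the paper: cap off $k-2$ ends to obtain an open annulus, observe that the added ideal points are fixed points of the extended homeomorphism, extend the measure trivially over them, and invoke Franks's theorem (Theorem~\ref{thm:franks}). The one point you flagged but left open --- that the cited form of Franks's theorem assumes invariance of \emph{Lebesgue} measure rather than an arbitrary finite non-atomic Borel measure of full support --- is exactly what the paper resolves by conjugating the extended measure to a rescaled Lebesgue measure via the Berlanga--Epstein theorem (Theorem~\ref{thm:berep}), a generalization of Oxtoby--Ulam.
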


To prove Proposition~\ref{prop:usefranks}, we will use the following theorem of Franks. Below, let $A$ denote the open annulus $\mathring{D^2}\setminus\{0\}$.

\begin{theorem}
\label{thm:franks}
\cite[Thm. 4.4]{franks}
Let $f:A\to A$ be a homeomorphism which preserves Lebesgue measure. If $f$ has at least one periodic orbit, then $f$ has infinitely many periodic orbits.
\end{theorem}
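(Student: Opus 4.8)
The plan is to prove Franks's theorem by combining Poincar\'e recurrence, rotation numbers, and a generalized Poincar\'e--Birkhoff theorem extracted from Brouwer's plane translation theorem. First I would observe that Lebesgue measure $\mu$ on $A=\mathring{D^2}\setminus\{0\}$ is finite and of full support, so Poincar\'e recurrence shows that $f$ is nonwandering and that $\mu$-almost every point is recurrent. I then fix a lift $F$ of $f$ to the universal cover $\tilde A=\R\times(0,1)$ and let $T(x,y)=(x+1,y)$ generate the deck group. The first-coordinate displacement $\phi(z)=p_1(F\tilde z)-p_1(\tilde z)$ descends to a continuous function on $A$, and by Birkhoff's ergodic theorem the averages $\frac1n\sum_{k=0}^{n-1}\phi(f^k z)$ converge for $\mu$-a.e.\ $z$, so a rotation number $\rho(z)$ is defined on a full-measure set of recurrent points.

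The engine of the argument is \emph{Franks's fixed-point lemma}: if $F$ is a lift of a nonwandering annulus homeomorphism and there exist both a positively returning disk and a negatively returning disk (a small disk meeting a rightward, resp.\ leftward, translate of a forward image of itself), then $F$ has a fixed point. This is where Brouwer's plane translation theorem enters, applied to $F$ viewed as an orientation-preserving homeomorphism of the strip $\tilde A\cong\R^2$. Applying this lemma to the family $F^qT^{-p}$ yields a realization statement: if recurrent points with rotation numbers on both sides of a rational $p/q$ exist, then $f$ has a periodic orbit of rotation number $p/q$. Consequently, if the set of realized rotation numbers contains two distinct values $\alpha<\beta$, every rational in $(\alpha,\beta)$ is realized by a periodic orbit, producing infinitely many periodic orbits of distinct rotation numbers, and we are done.

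It remains to handle the case where all recurrent points share a single rotation number $\rho$. Since a periodic orbit exists, $\rho$ is rational; replacing $f$ by a suitable power and composing $F$ with a power of $T$ (harmless, since finitely many periodic orbits of $f$ forces finitely many for every power, and conversely infinitely many for a power forces infinitely many for $f$), I may assume $\rho=0$, so that $F$ has a fixed point and the mean rotation $\int_A\phi\,d\mu=0$. Suppose for contradiction that $f$ has only finitely many periodic orbits. The key input is \emph{Atkinson's theorem}: because $\phi$ is integrable with zero mean, the skew product $(z,t)\mapsto(fz,\,t+\phi(z))$ on $A\times\R$ is conservative, so for $\mu$-a.e.\ $z$ the Birkhoff sums $S_n\phi(z)$ return arbitrarily close to $0$ infinitely often. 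This step is where I expect the \emph{main obstacle} to lie: one must upgrade this recurrence to the existence of orbits that are simultaneously positively and negatively returning relative to translates $T^{\pm j}$, and then feed these into the lemma applied to $F^qT^{-p}$ for small nonzero $p/q$ to manufacture periodic orbits of nonzero rotation number, contradicting that the rotation set is $\{0\}$. The delicate degenerate subcase is when $\phi$ is continuously cohomologous to a constant, where the oscillation can collapse; this must be excluded separately, for instance by noting that a continuous coboundary makes the lifted displacement telescope and thereby trivializes the return dynamics, contradicting that $f$ is nonwandering with only finitely many periodic orbits.

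Combining the two cases---nondegenerate rotation set handled by the Poincar\'e--Birkhoff engine, and degenerate rotation set handled by conservativity and Atkinson's theorem---shows that a Lebesgue-measure-preserving homeomorphism of $A$ with at least one periodic orbit has infinitely many. In writing this up I would isolate the Brouwer-theoretic fixed-point lemma as a stand-alone statement, since it is reused for every pair $(p,q)$, and present the conservativity argument of the degenerate case as the only genuinely new ingredient.
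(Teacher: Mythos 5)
Your proposal does not correspond to any argument in the paper: the paper does not prove this statement at all, but simply cites it as \cite[Thm.\ 4.4]{franks}, so what has to be assessed is whether your sketch would actually constitute a proof of Franks's theorem. It would not, for two concrete reasons. First, your very first step already assumes more than you have: Birkhoff's ergodic theorem (and later Atkinson's theorem) requires the displacement function $\phi$ to be in $L^1(\mu)$, but for a Lebesgue-measure-preserving \emph{homeomorphism} of the open annulus $\phi$ is merely continuous and can be non-integrable, because the twisting near the ends may blow up (e.g.\ an area-preserving twist $(r,\theta)\mapsto(r,\theta+\tfrac{1}{1-r})$ has $\int \frac{r}{1-r}\,dr=\infty$). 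So rotation numbers and the mean rotation need not be defined on your full-measure set, and the entire measure-theoretic frame of the argument is unsupported. This is precisely why Franks's own formulation runs through topological \emph{returning disks} rather than a.e.\ rotation numbers: it avoids any integrability hypothesis.

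Second, and more seriously, the degenerate case --- all recurrent points sharing the single rotation number $\rho=0$ --- is the genuinely hard part of the theorem, and your treatment of it is a gap you yourself flag. Atkinson's theorem only gives that the sums $S_n\phi(z)$ return near $0$ along a subsequence; to manufacture a periodic orbit of rotation number $p/q\neq 0$ via the fixed-point lemma applied to $F^qT^{-p}$ you need a \emph{positively} returning disk for that lift, i.e.\ a point with $S_{nq}\phi(z)\gtrsim np+j$ for some $n\ge 1$, $j\ge 1$, and recurrence of the sums near zero provides nothing of the sort. Indeed, when $\phi$ is a bounded coboundary $\phi=u\circ f-u$, all Birkhoff sums satisfy $|S_n\phi|\le 2\|u\|_\infty$, so \emph{no} nonzero rational rotation number can ever be realized through returning disks, and your engine provably stalls; yet such maps are not excluded by anything you say. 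Your proposed disposal of this subcase --- that a coboundary displacement ``contradicts that $f$ is nonwandering with only finitely many periodic orbits'' --- is circular, since it presupposes exactly the conclusion (finitely many periodic orbits is impossible) that you are trying to establish. Franks's actual proof of the degenerate case requires a genuinely different idea (his Brouwer-theoretic machinery for nonwandering homeomorphisms of genus-zero surfaces, puncturing at the known periodic orbit), and nothing in your sketch substitutes for it. The nondegenerate case (two distinct realized rotation numbers forcing all intermediate rationals to be realized) is fine modulo the fixed-point lemma, but as it stands the proposal proves only that case.
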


To apply Theorem~\ref{thm:franks}, we will need the following result of Berlanga-Epstein, generalizing results of Oxtoby-Ulam \cite{ou}. Let $\mu$ be a Borel measure on a manifold $X$. We say that $\mu$ is ``non-atomic'' if $\mu(\{x\})=0$ for each $x\in X$, and that $\mu$ has ``full support'' if $\mu(U)>0$ for every nonempty open set $U\subset X$.

\begin{theorem}
\label{thm:berep}
(special case of \cite{berep})
Let $\mu_1$ and $\mu_2$ be two Borel measures on a manifold $X$ which are non-atomic and have full support. If $\mu_1(X)=\mu_2(X)<\infty$, then there is a homeomorphism $h:X\to X$ such that $h_*\mu_1=\mu_2$.
\end{theorem}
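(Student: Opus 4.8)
The plan is to deduce this non-compact statement from the classical \emph{compact} Oxtoby--Ulam theorem \cite{ou} by an exhaustion argument, using the finiteness and equality of the total masses to control the ends of $X$. The compact input I would take as known is the manifold version: if $M$ is a compact connected manifold with boundary and $\nu_1,\nu_2$ are non-atomic Borel measures of full support with $\nu_1(\partial M)=\nu_2(\partial M)=0$ and $\nu_1(M)=\nu_2(M)$, then there is a homeomorphism $h\colon M\to M$ with $h|_{\partial M}=\op{id}$ and $h_*\nu_1=\nu_2$. Precomposing such an $h$ with a collar extension of a prescribed boundary homeomorphism $g$ yields the \emph{relative} version I will actually use: for any homeomorphism $g$ of $\partial M$ there is a homeomorphism $h$ of $M$ with $h|_{\partial M}=g$ and $h_*\nu_1=\nu_2$. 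We may assume $X$ is connected (nothing matching total masses alone can do otherwise) and, as in the application, without boundary.

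First I would fix a proper smooth $\varphi\colon X\to[0,\infty)$ and regular values $0<b_1<b_2<\cdots\to\infty$ so that $X_n\eqdef\varphi^{-1}([0,b_n])$ is a compact connected submanifold with boundary with $X_n\subset\op{int}(X_{n+1})$ and $\bigcup_n X_n=X$; since $\mu_1$ is finite and non-atomic, only countably many level sets carry positive measure, so the $b_n$ can be chosen with $\mu_1(\partial X_n)=0$, and similarly the analogous exhaustion on the target side can be chosen with $\mu_2$-null boundaries. Writing $M_\infty=\mu_1(X)=\mu_2(X)<\infty$, the accumulated masses satisfy $\mu_1(X_n)\nearrow M_\infty$, and I will drive the whole construction by matching these partial masses on the two sides.

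Next I would build homeomorphisms $h_n\colon X_n\to Y_n$ onto compact submanifolds $Y_n$ by induction, maintaining three properties: $h_{n+1}|_{X_n}=h_n$; $\mu_2(Y_n)=\mu_1(X_n)$ with $(h_n)_*(\mu_1|_{X_n})=\mu_2|_{Y_n}$; and the complements $X\setminus\op{int}(X_n)$ and $X\setminus\op{int}(Y_n)$ remain homeomorphic by a homeomorphism extending $h_n$ on the common boundary. The base case is the compact Oxtoby--Ulam theorem applied to the innermost piece $X_1\to Y_1$. For the step, the complement-matching hypothesis lets me choose a ``shell'' $S_n\subset X$ abutting $Y_{n-1}$ that is homeomorphic, rel inner boundary, to $R_n\eqdef X_n\setminus\op{int}(X_{n-1})$; because $\mu_2$ is non-atomic with full support, the $\mu_2$-mass of such a shell varies continuously as it is thickened, so I can calibrate it to equal $\mu_1(R_n)$. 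Then $R_n$ and $S_n$ are compact manifolds with boundary carrying measures of equal total mass vanishing on their boundaries, and the relative compact theorem (with prescribed inner-boundary homeomorphism $h_{n-1}|_{\partial X_{n-1}}$) extends $h_{n-1}$ across $R_n$ to $h_n$. Using the freedom in choosing the shells together with $\mu_1(X_n)\nearrow M_\infty$, I would arrange that the $Y_n$ also exhaust $X$. The maps then assemble into $h\colon X\to X$ with $h|_{X_n}=h_n$; it is a continuous bijection onto $\bigcup_n Y_n=X$, its inverse is continuous because $h^{-1}|_{Y_n}=h_n^{-1}$ and the $Y_n$ exhaust, and $h_*\mu_1=\mu_2$ since the identity holds on each $X_n$ and the measures are finite.

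The main obstacle is precisely the bookkeeping that forces the limit to be a \emph{homeomorphism of $X$ onto itself} rather than a measure-preserving embedding onto a proper subset: one must keep the target regions $Y_n$ both realizing the accumulated masses $\mu_1(X_n)$ \emph{and} genuinely exhausting $X$, while the topology of the complements (i.e. the end structure) is preserved at every stage so that the next shell can actually be attached. This is where all four hypotheses enter: finiteness and equality of the total masses make the accumulated masses tend to the common limit $M_\infty$ from both sides, and full support together with non-atomicity make the mass of a growing region a continuous surjection onto $[0,M_\infty)$, so the calibrated shells exist and their union runs off to infinity. In the case of interest $X$ is the open annulus $A$, which has a single well-understood end and makes these points transparent, but the argument above treats the general $X$.
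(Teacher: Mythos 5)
First, a point of comparison: the paper does not prove this statement at all --- it is imported verbatim as a special case of Berlanga--Epstein \cite{berep}, so there is no in-paper argument to measure you against. Your strategy (reduce to the compact, rel-boundary Oxtoby--Ulam theorem and propagate it along a compact exhaustion, using finiteness and equality of the total masses to kill any end condition) is in fact the right one, and is essentially the route taken in the cited reference. Your reductions at the start are also correct: connectedness is genuinely needed (the statement as printed is false for a disconnected $X$ with mismatched component masses), and the boundaryless case suffices for the application to the open annulus.

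That said, two steps of your sketch have real gaps. (1) The calibration step rests on the claim that ``the $\mu_2$-mass of such a shell varies continuously as it is thickened.'' This is false as stated: a non-atomic measure of full support can still charge a hypersurface (e.g.\ Lebesgue measure on $\R^2$ plus arc-length measure on a circle), so the mass of $\varphi^{-1}([b_n,t])$-type shells can jump in $t$, and the intermediate value theorem does not hand you a shell of exactly the prescribed mass. Non-atomicity does let you split any positive-measure set, so exact calibration is achievable, but only by a more careful construction (growing the shell non-uniformly, e.g.\ by thin fingers whose captured mass varies continuously) --- this needs an argument, not an appeal to continuity. (2) More seriously, the surjectivity of the limit map is exactly the hard part, and your sketch only asserts it: $\mu_2(Y_n)\to M_\infty$ together with full support forces $X\setminus\bigcup_n Y_n$ to be a null set with empty interior, but not to be empty, and a one-directional extension scheme can perfectly well converge to a measure-preserving embedding onto a proper open subset. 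To rule this out you must interleave the construction --- alternately prescribing pieces of the domain and of the target exhaustion (a back-and-forth), while simultaneously keeping the complements homeomorphic rel boundary and keeping the mass budget $\mu_2(Y_n)=\mu_1(X_n)$ consistent (which forces you to advance the domain exhaustion by variable amounts so that $\mu_1(X_{n+1})$ exceeds $\mu_2(Y_n\cup K)$ for the compact set $K$ you want to engulf). That interleaving, together with the control of the end structure needed to match shells rel boundary, is precisely the content of \cite{berep}; without it the proof is not complete.
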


\begin{proof}[Proof of Proposition~\ref{prop:usefranks}.]
We can identify $C$ with $A\setminus\{z_1,\ldots,z_{k-2}\}$ where $z_1,\ldots,z_{k-2}\in A$ are distinct.  Since the diffeomorphism $f:C\to C$ preserves the ends of $C$, it follows that $f$ extends to a homeomorphism $\overline{f}:A\to A$ with $\overline{f}(z_i)=z_i$ for $i=1,\ldots,k-2$.

Let $\mu$ denote the measure on $C$ determined by $\omega$. We extend $\mu$ to a measure $\overline{\mu}$ on $A$ by setting $\overline{\mu}(U)=\mu(U\setminus\{z_1,\ldots,z_{k-2}\})$ for any Borel measurable set $U\subset A$. The homeomorphism $\overline{f}$ preserves the measure $\overline{\mu}$. The measure $\overline{\mu}$ has full support because $\mu$ does, and is non-atomic because the points $z_1,\ldots,z_{k-2}$ have measure zero. Thus by Theorem~\ref{thm:berep}, there is a homeomorphism $h:A\to A$ such that $h_*\overline{\mu}$ is a rescaling of the Lebesgue measure. In particular, the conjugate homeomorphism $h\circ \overline{f} \circ h^{-1}:A\to A$ preserves Lebesgue measure.

It now follows from Theorem~\ref{thm:franks} that the homeomorphism $\overline{f}$ has either no periodic orbits or infinitely many. Since $z_1,\ldots,z_{k-2}$ are fixed points of $\overline{f}$, Proposition~\ref{prop:usefranks} follows immediately.
\end{proof}

\subsection{Proof of the main theorem}

\begin{proof}[Proof of Theorem~\ref{thm:main}.]
Assume that $\lambda$ has only finitely many simple Reeb orbits; we need to show that $\lambda$ has exactly two simple Reeb orbits. Let $J$ be a generic $\lambda$-compatible almost complex structure on $\R\times Y$.

By Proposition~\ref{prop:special}, there exists a special $J$-holomorphic curve $C$ in $\R\times Y$. By Definition~\ref{def:special}, this implies that the following conditions hold:
\begin{description}
\item{(a)}
$C$ is irreducible and embedded and has $\op{ind}(C)=I(C)=2$.
\item{(b$'$)}
$C$ has at least two ends.
\item{(c$'$)}
$C$ has genus zero.
\item{(d)}
$C$ does not have two positive ends, or two negative ends, at covers of the same simple Reeb orbit.
\item{(e$'$)}
$C$ has no ends at positive hyperbolic orbits or at non-simple negative hyperbolic orbits.
\item{(f)}
$\M_C^J/\R$ is compact.
\end{description}
(Conditions (b$'$), (c$'$), and (e$'$) above are weaker than the corresponding conditions (b), (c), and (e) in Definition~\ref{def:special}, but are all that we need in the proof.\footnote{In fact, as explained in the discussion preceding \cite[Cor.\ 1.4]{hwz1}, the Brouwer translation theorem allows one to avoid using condition (b$'$) entirely.  However, we have kept this condition in order to streamline the exposition.})

We now check that $C$ satisfies hypotheses (i)--(v) of Proposition~\ref{prop:gss}, so that $C$ projects to a global surface of section for the Reeb flow.

(i) By (d) and (e$'$) above, $C\in\M^J(\alpha,\beta)$ where $\alpha$ and $\beta$ are admissible orbit sets.  Then by (a) and \cite[Prop.\ 3.7(2)]{bn}, every $C'\in\M^J_C$ is embedded in $\R\times Y$.

(ii) This follows from (a), (c$'$), and (e$'$) above.

(iii) This is condition (d) above.

(iv) By (a) and (d) above, and by Proposition~\ref{prop:partitions}, if $C$ has a positive end at an $m$-fold cover of a simple Reeb orbit $\gamma$, then $p_\gamma^+(m)=(m)$; and if $C$ has a negative end at an $m$-fold cover of a simple Reeb orbit $\gamma$, then $p_\gamma^-(m)=(m)$. Hypothesis (iv) now follows from Lemma~\ref{lem:relprime}.

(v) This is condition (f) above.

Thus Proposition~\ref{prop:gss} is applicable to $C$, and as in \S\ref{sec:prm} we obtain a Poincar\'{e} return map
\[
f: \pi_Y(C) \longrightarrow \pi_Y(C).
\]

By Lemma~\ref{lem:poincare}, we can apply Proposition~\ref{prop:usefranks} to the map $f$. Since we are assuming that $\lambda$ has only finitely many Reeb orbits, it follows from Proposition~\ref{prop:usefranks} and condition (b$'$) that $C$ has exactly two ends, at Reeb orbits which we denote by $\gamma_+$ and $\gamma_-$, and that $\lambda$ has no simple Reeb orbits other than the simple Reeb orbits underlying $\gamma_+$ and $\gamma_-$, which we denote by $\overline{\gamma_+}$ and $\overline{\gamma_-}$. Moreover, $\overline{\gamma_+}$ and $\overline{\gamma_-}$ are distinct by Theorem~\ref{thm:two} (one can also show this more directly using intersection theory). Thus $\lambda$ has exactly two simple Reeb orbits.
\end{proof}

\section{The non-torsion case}
\label{sec:final}

We conclude by proving Theorem~\ref{thm:nontorsion}. Below, if $\gamma_1$ and $\gamma_2$ are Reeb orbits, let $i_{\gamma_1,\gamma_2}$ denote the map
\begin{equation}
\label{eq: inclusion}
\begin{split}
i_{\gamma_1,\gamma_2}: \mathbb{Z}^2 &\longrightarrow H_1(Y),\\
(m_1,m_2) &\longmapsto m_1[\gamma_1]+m_2[\gamma_2].
\end{split}
\end{equation}

\begin{proof}[Proof of Theorem~\ref{thm:nontorsion}(b).] 
We know by Theorem~\ref{thm:wh} that there are at least three simple Reeb orbits. Suppose to get a contradiction that there are exactly three.

Choose $\Gamma \in H_1(Y)$ such that $c_1(\xi) + 2 \op{PD}(\Gamma)$ is torsion.  Since we are assuming that $c_1(\xi)$ is not torsion, it follows that $\Gamma\in H_1(Y)$ is not torsion either. 

By Proposition~\ref{prop:odd}, at least one of the simple Reeb orbits is positive hyperbolic.

We claim that the other two simple Reeb orbits are elliptic. To see this, note that if there are no elliptic orbits, then there are just three hyperbolic simple Reeb orbits, so it follows from the definition of the $ECH$ chain complex that $ECH_*(Y,\lambda,\Gamma)$ is finitely generated, contradicting Proposition~\ref{prop:Useq}(a). If there is one elliptic simple Reeb orbit $e$ and two hyperbolic simple Reeb orbits $h_1,h_2$, then let $\{\sigma_k\}_{k\ge 1}$ be a U-sequence in $ECH_*(Y,\lambda,\Gamma)$ provided by Proposition~\ref{prop:Useq}(a).
By \eqref{eqn:actionrepresentation}, the spectral invariant $c_{\sigma_k}(Y,\lambda)$ is the symplectic action of some ECH generator $\alpha_k=e^{m_k}h_1^{n_{1,k}}h_2^{n_{2,k}}$ where $m_{k}$ is a nonnegative integer and $n_{1,k},n_{2,k}\in\{0,1\}$. By \eqref{eqn:actionU}, the symplectic action of $\alpha_k$ is a strictly increasing function of $k$. It then follows that $c_{\sigma_k}(Y,\lambda)$ grows at least as $k\mc{A}(e)/4$, so that  $c_{\sigma_k}(Y,\lambda)^2/k$ grows at least linearly with $k$.  This contradicts the asymptotic formula \eqref{eqn:Useqasymptotics}.

Thus there are two elliptic simple Reeb orbits $e_1,e_2$ and one positive hyperbolic simple Reeb orbit $h$. We claim now that the kernel of $i_{e_1,e_2}$ has rank one. If the kernel of  $i_{e_{1},e_{2}}$ has rank zero, then as before $ECH(Y,\lambda,\Gamma)$ is finitely generated, a contradiction.  If the kernel has rank two, then the orbits $e_1$ and $e_2$ represent torsion classes in homology. Since by the definition of the mod 2 grading $I_2$, every generator of the chain complex $ECC_{\op{even}}(Y,\lambda)$ takes the form $e_1^{m_1}e_2^{m_2}$, we then have $ECH_{\op{even}}(Y,\lambda,\Gamma)=0$, contradicting Proposition~\ref{prop:Useq}(b).  

Thus the kernel of $i_{e_{1},e_{2}}$ has rank one, and is generated by some integer vector $(v_1,v_2)$. Without loss of generality $v_2>0$. Consider a $U$-sequence $\{\sigma_k\}_{k\ge 1}$ in $ECH_{\op{even}}(Y,\lambda,\Gamma)$. By \eqref{eqn:actionrepresentation}, for each $k$ the spectral invariant $c_{\sigma_k}(Y,\lambda)$ is the action of an orbit set $e_1^{m_{1,k}}e_2^{m_{2,k}}$ in the homology class $\Gamma$. For each $k$, we may express
\begin{align}
\label{eqn:ak}
(m_{1,k},m_{2,k})=(m_{1,1},m_{2,1})+a_k(v_1,v_2)
\end{align}
for some $a_k \in \mathbb{Z}$. By \eqref{eqn:actionU}, the $a_k$ are distinct. Since $m_{1,k}, m_{2,k} \geq 0$ by the definition of orbit set, and since $v_2>0$, it follows that $v_1 \ge 0$.  (Otherwise there could only be finitely many $k$ such that \eqref{eqn:ak} has both components nonnegative.) Since both $v_1$ and $v_2$ are nonnegative, it follows that the sequence $c_{\sigma_k}(Y,\lambda)$ grows at least linearly with $k$, since each term in this sequence exceeds the previous one by at least $\min(\mc{A}(e_1),\mc{A}(e_2))$. Once again this contradicts the asymptotic formula \eqref{eqn:Useqasymptotics}.
\end{proof}

\begin{proof}[Proof of Theorem~\ref{thm:nontorsion}(a).]

By Theorem~\ref{thm:two}, there are at least two simple Reeb orbits. Suppose to get a contradiction that there are exactly two simple Reeb orbits, and denote these by $\gamma_1$ and $\gamma_2$. Choose $\Gamma$ such that $c_1(\xi) + 2\op{PD}(\Gamma)$ is torsion. By Proposition~\ref{prop:Useq}(a), there is a $U$-sequence in the class $\Gamma$; it follows from \eqref{eqn:actionrepresentation} and \eqref{eqn:Useqasymptotics} that there is an infinite sequence $(\gamma_1^{m_{1,k}}\gamma_2^{m_{2,k}})_{k\ge 1}$ of orbit sets in the class $\Gamma$ with strictly increasing action.

Now consider the map on homology \eqref{eq: inclusion}. If the kernel of this map has rank $0$, then there is at most one orbit set in every homology class, contradicting the existence of infinitely many orbit sets in the class $\Gamma$. If the kernel has rank $2$, then there can not be any orbit sets in any non-torsion homology class, which again is a contradiction since our hypothesis that $c_1(\xi)$ is not torsion implies that $\Gamma$ is not torsion either.  If the kernel has rank $1$, then we can repeat the last paragraph of the proof of Theorem~\ref{thm:nontorsion}(b) to get a contradiction.
\end{proof}

\begin{appendix}

\section{U-sequences from Seiberg-Witten theory}
\label{app:swfacts}

We now prove Proposition~\ref{prop:Useq}. Let $\widehat{HM}^*(Y,\frak{s})$ denote Seiberg-Witten Floer cohomology with $\mathbb{Z}$ coefficients. Like $\widehat{HM}^*(Y,\frak{s}; \Z/2)$, the groups $\widehat{HM}^*(Y,\frak{s})$ have a canonical $\Z/2$ grading which refines a relative $\Z/d$ grading. This allows us to split
\[
\widehat{HM}^*(Y,\frak{s}) = \widehat{HM}^{\op{even}}(Y,\frak{s}) \oplus \widehat{HM}^{\op{odd}}(Y,\frak{s}).
\]
Since Taubes's isomorphism \eqref{eqn:taubes} preserves the relative gradings, it follows that for any given $\Gamma$, this isomorphism will either preserve or switch the decompositions of $ECH$ and $\widehat{HM}$ into even and odd parts.

The $U$ map on $\widehat{HM}^*(Y,\frak{s}; \Z/2)$ also lifts to a canonical degree $2$ map on $\widehat{HM}^*(Y,\frak{s})$. Define a ``$U$-sequence'' on $\widehat{HM}^*$ analogously to the definition in \S\ref{sec:Taubes}. By Theorem~\ref{thm:taubes} and equation \eqref{eqn:spinc}, Proposition~\ref{prop:Useq} follows from the following lemma:

\begin{lemma}
\label{lem:SW}
Let $Y$ be a closed oriented connected three-manifold, and let $\frak{s}$ be a spin-c structure on $Y$ with $c_1(\frak{s})\in H^2(Y;\Z)$ torsion. Then:
\begin{description}
\item{(a)}
There exists a $U$-sequence $\{\sigma_k\}_{k\ge 1}$ in $\widehat{HM}^{\op{even}}(Y,\frak{s})$ such that each $\sigma_k$ is non-torsion.
\item{(b)} If $b_1(Y)>0$, then there exist $U$-sequences in both $\widehat{HM}^{\op{even}}(Y,\frak{s})$ and $\widehat{HM}^{\op{odd}}(Y,\frak{s})$ such that each $\sigma_k$ is non-torsion.
\end{description}
\end{lemma}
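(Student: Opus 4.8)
The plan is to deduce both parts from the structure of the ``bar'' version $\overline{HM}^*(Y,\frak{s})$, which records the contribution of the reducible Seiberg--Witten solutions and is the source of all infinite $U$-towers in monopole Floer cohomology. Recall that $\widehat{HM}^*$, $\widecheck{HM}^*$ and $\overline{HM}^*$ fit into a long exact sequence of $\Z[U]$-modules in which every map commutes with $U$ and shifts the grading by a fixed amount. The idea is to: (i) build explicit $U$-sequences of non-torsion classes inside $\overline{HM}^*$ in arbitrarily negative degrees; (ii) observe that the comparison map between $\widehat{HM}^*$ and $\overline{HM}^*$ in this sequence is an isomorphism in all sufficiently negative degrees; and (iii) transport the $U$-sequences from $\overline{HM}^*$ to $\widehat{HM}^*$ through this isomorphism, checking that every relevant property survives.

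First I would record the structure of $\overline{HM}^*(Y,\frak{s})$ for torsion $\frak{s}$, quoting Kronheimer--Mrowka \cite{km}. Because $c_1(\frak{s})$ is torsion, the reducible locus is a torus of dimension $b_1(Y)$, and $\overline{HM}^*(Y,\frak{s})$ is a finitely generated free module (suitably completed) over the Laurent ring $\Z[U,U^{-1}]$, with $U$ of degree $2$; in particular it is a free $\Z$-module, it is nonzero in every sufficiently positive and every sufficiently negative degree, and $U$ acts invertibly. Its rank comes from the homology $\Lambda^* H^1(Y;\Z)$ of the reducible torus, split evenly between even and odd gradings when $b_1(Y)>0$, while for $b_1(Y)=0$ it has rank one and is supported in even gradings. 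Given this, $U$-sequences inside $\overline{HM}^*$ are immediate: choose a nonzero homogeneous generator $\bar\sigma_1$ of the desired parity in a sufficiently negative degree and set $\bar\sigma_{k+1}=U^{-1}\bar\sigma_k$. Since $U$ is invertible and the module is $\Z$-free, each $\bar\sigma_k$ is nonzero and non-torsion, the degrees decrease to $-\infty$, and $U\bar\sigma_{k+1}=\bar\sigma_k$. For (a) take the even parity (always available); for (b), when $b_1(Y)>0$, carry this out in each parity.

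Next I would use the long exact sequence to move these sequences into $\widehat{HM}^*$. The key input, again from \cite{km}, is that the reduced group $HM_{\mathrm{red}}$ is finitely generated, so the comparison map between $\widehat{HM}^*(Y,\frak{s})$ and $\overline{HM}^*(Y,\frak{s})$ is an isomorphism in all degrees below some threshold $d_0$; this is precisely the half-infinite range matching the ECH $U$-tower under Taubes's isomorphism $ECH_*\cong\widehat{HM}^{-*}$. Choosing $\bar\sigma_1$ in degree below $d_0$ forces the whole sequence $\{\bar\sigma_k\}$ into this range, so it has a corresponding sequence $\{\sigma_k\}$ in $\widehat{HM}^*$. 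Because the comparison map commutes with $U$ and is an isomorphism on these graded pieces, we get $U\sigma_{k+1}=\sigma_k$, each $\sigma_k$ is nonzero and non-torsion, and the parity is preserved. This yields the required $U$-sequence in $\widehat{HM}^{\mathrm{even}}(Y,\frak{s})$ for part (a), and, when $b_1(Y)>0$, $U$-sequences in both $\widehat{HM}^{\mathrm{even}}(Y,\frak{s})$ and $\widehat{HM}^{\mathrm{odd}}(Y,\frak{s})$ for part (b).

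The main obstacle is bookkeeping with Kronheimer--Mrowka's theory, namely pinning down the structural statements and grading conventions. Specifically one must verify (1) that for torsion $\frak{s}$ the bar version is a free $\Z[U,U^{-1}]$-module with the stated even/odd rank distribution --- this is where the hypothesis $b_1(Y)>0$ in (b) enters, since non-torsion odd classes exist exactly when the reducible torus has positive dimension; and (2) that $HM_{\mathrm{red}}$ is finite, so that $\widehat{HM}^*$ agrees with $\overline{HM}^*$ over a half-infinite range, and that this range is the one of arbitrarily negative degree, matching the direction in which a $U$-sequence must run (since $U$ has degree $+2$ and $U\sigma_{k+1}=\sigma_k$ forces $\deg\sigma_k\to-\infty$). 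Once these facts are cited correctly, the transport of the $U$-sequences is purely formal.
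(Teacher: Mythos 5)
Your overall strategy is the same as the paper's: use the exact triangle relating $\widehat{HM}^*$, $\widecheck{HM}^*$ and $\overline{HM}^*$, note that in sufficiently negative degrees $\widehat{HM}^*$ agrees with $\overline{HM}^*$ (equivalently, $\widecheck{HM}^*$ vanishes there), and transport a $U$-sequence from $\overline{HM}^*$, where $U$ acts invertibly. That reduction is fine. The problem is your step (1): the structural claim that for torsion $\frak{s}$ the group $\overline{HM}^*(Y,\frak{s})$ is a free $\Z[U,U^{-1}]$-module whose rank ``comes from'' $\Lambda^*H^1(Y;\Z)$, i.e.\ is $2^{b_1(Y)}$ split evenly between the two parities. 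This is false in general. By Kronheimer--Mrowka, $\overline{HM}_*(Y,\frak{s})\otimes\R$ is the coupled Morse homology of the reducible torus ${\mathbb T}^{b_1(Y)}$, computed by the homology of the twisted de Rham complex $\bigl(\Omega^*({\mathbb T}^{b_1})\otimes\R[U,U^{-1}],\, d+U\eta\wedge\bigr)$, whose spectral sequence carries a generally nontrivial differential $d_3\colon x\mapsto U(\eta\wedge x)$ determined by the triple cup product form of $Y$. For $Y=T^3$ with the torsion spin-c structure, $\eta$ generates $H^3$, the differential kills $H^0$ and $H^3$, and the resulting rank over $\R[U,U^{-1}]$ is $6$, not $2^{3}=8$. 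So the ``bookkeeping'' you defer to in your last paragraph cannot be carried out as stated: the module is not $\Lambda^*H^1\otimes\Z[U,U^{-1}]$.

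What you actually need is only the non-vanishing of $\overline{HM}\otimes\R$ in even degrees (always) and in both parities when $b_1(Y)>0$, and this survives the differential, but it requires an argument you have not supplied. The paper gets it by observing that the spectral sequence degenerates after the $E^3$ page, so the Euler characteristic of the $\Z/2$-graded $\R[U,U^{-1}]$-module equals $\chi\bigl(H^*({\mathbb T}^{b_1})\bigr)$, which is $1$ when $b_1=0$ (giving (a)) and $0$ when $b_1>0$; in the latter case the even and odd ranks are equal, and combining with \cite[Cor.\ 35.1.3]{km} (the coupled Morse homology is never zero) shows both parities are nonzero, giving (b). Replace your freeness claim with this Euler-characteristic-plus-nonvanishing argument and the rest of your proof goes through.
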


While this lemma is well known, we present a proof for completeness.

\begin{proof}[Proof of Lemma~\ref{lem:SW}.]
As explained in \cite[\S22.3]{km}, there are companion groups $\widecheck{HM}^*(Y,\frak{s})$ and $\overline{HM}^*(Y,\frak{s})$ which fit into an exact triangle
\begin{equation}
\label{eqn:exacttriangle}
\cdots \longleftarrow \widecheck{HM}^*(Y,\frak{s})^{*} \longleftarrow \widehat{HM}^{*}(Y,\frak{s}) \longleftarrow \overline{HM}^{*-1}(Y,\frak{s}) \longleftarrow  \cdots.
\end{equation}
By construction, as in the proof of \cite[Cor.\ 35.1.4]{km}, the groups $\widecheck{HM}^*(Y,\frak{s})$ vanish in sufficiently negative degree.  Hence, by \eqref{eqn:exacttriangle}, it suffices to prove that there are such $U$-sequences in $\overline{HM}^*(Y,\frak{s})$.  By the calculations in \cite[\S 35]{km}, the latter group is a module over $\mathbb{Z}[U,U^{-1}]$.  It therefore suffices to prove that $\overline{HM}_*(Y,\frak{s})\otimes \mathbb{R}$ is non-vanishing in even degrees when $b_1(Y)=0$ and that it is non-vanishing in both even and odd degrees when $b_1(Y)>0$. 
	
By \cite[\S 35]{km}, we have \begin{align} \label{eq:twistedco} \overline{HM}_*(Y,\frak{s})\otimes \mathbb{R} \simeq H_*({\mathbb T}^{b_1(Y)},L) \otimes \mathbb{R} \end{align}
where the right hand side denotes the ``coupled Morse homology'' defined in \cite[\S33]{km} for the torus ${\mathbb T}^{b_1(Y)},$ equipped with a suitable family of self-adjoint Fredholm operators $L$. (When $b_1(Y)=0$, this torus is to be interpreted as a single point.)  By \cite[Thm.\ 34.3.1]{km}, the vector space $H_*({\mathbb T}^{b_1(Y)},L) \otimes \mathbb{R}$ is isomorphic to the homology of the twisted de Rham complex 
\[
\left(\Omega^*({\mathbb T}^{b_1(Y)})\otimes \mathbb{R}[U,U^{-1}], d + U \eta \wedge\right)
\]
where $\eta $ is a suitable closed three form.  

The rest of the proof now goes via classical topology.  By \cite[p.\ 681]{km}, the homology of the above twisted de Rham complex is computed by a spectral sequence whose $E^3$ page is
\[
H^*({\mathbb T}^{b_1(Y)}) \otimes \mathbb{R}[U,U^{-1}]
\]
with differential
\[
d_3: x \longmapsto U(\eta \wedge x) \nonumber.
\]
Furthermore, this spectral sequence degenerates after this page. Now a graded module over $\mathbb{R}[U,U^{-1}]$ may be viewed equivalently as a $\Z/2$ graded vector space over $\mathbb{R}$. Applying this to $H_*({\mathbb T}^{b_1(Y)},L) \otimes \mathbb{R}$ and taking Euler characteristics, we obtain
\[
\chi\left(H_*({\mathbb T}^{b_1(Y)},L) \otimes \mathbb{R}\right)=\chi\left(H^*({\mathbb T}^{b_1(Y)})\right).
\]

If $b_1(Y)=0$, then $\chi\left(H^*({\mathbb T}^{b_1(Y)})\right)=1$, which proves assertion (a) of the lemma in view of the isomorphism \ref{eq:twistedco}.

If $b_1(Y)>0$, then $\chi\left(H^*({\mathbb T}^{b_1(Y)})\right)=0$. Combined with \cite[Cor.\ 35.1.3]{km}, which says that $H_*({\mathbb T}^{b_1(Y)},L) \otimes \mathbb{R}$ is never vanishing, and the isomorphism  \ref{eq:twistedco}, this proves assertion (b).
\end{proof}

\end{appendix}

\end{document}